\tikzset{>={Stealth[scale=1.2]}}
\tikzset{->-/.style={decoration={
			markings,
			mark=at position #1 with {\arrow{>}}},postaction={decorate}}}
\tikzset{-w-/.style={decoration={
			markings,
			mark=at position #1 with {\arrow{Stealth[fill=white,scale=1.4]}}},postaction={decorate}}}
\tikzset{->-/.default=0.65}
\tikzset{-w-/.default=0.65}
\tikzstyle{bullet}=[circle,fill=black,inner sep=0.5mm]
\tikzstyle{wullet}=[circle,draw=black,fill=white,inner sep=0.5mm,line width=0.5pt]
\tikzstyle{cross}=[cross out, draw=black, inner sep=4.8pt, line width = 1pt]
\tikzstyle{circ}=[circle,draw=black,fill=white,inner sep=5pt,line width=1pt]
\tikzstyle{vertex}=[circle,draw=black,thick,inner sep=0.5mm]
\tikzset{darrow/.style={double distance = 4pt,>={Implies},->},
	darrowthin/.style={double equal sign distance,>={Implies},->},
	tarrow/.style={-,preaction={draw,darrow}},
	qarrow/.style={preaction={draw,darrow,shorten >=0pt},shorten >=1pt,-,double,double
		distance=0.2pt}}
\newcommand{\tikzfig}[1]{\begin{tikzpicture}[auto,baseline={([yshift=-.5ex]current bounding box.center)}]#1\end{tikzpicture}}
\tikzset{partial ellipse/.style args={#1:#2:#3}{insert path={+ (#1:#3) arc (#1:#2:#3)}}}
\newtheorem{theorem}{Theorem}[section]
\newtheorem{proposition}[theorem]{Proposition}
\newtheorem{corollary}[theorem]{Corollary}
\theoremstyle{definition}
\newtheorem{definition}[theorem]{Definition}
\theoremstyle{remark}
\newtheorem{example}[theorem]{Example}
\newtheorem{remark}[theorem]{Remark}
\numberwithin{theorem}{section}
\newcommand{\R}{\mathbb{R}}
\newcommand{\calC}{\mathcal{C}}
\newcommand{\cals}{\mathcal{s}}
\newcommand{\Hom}{\mathop{\mathrm{Hom}}\nolimits}
\newcommand{\Span}{\mathop{\mathrm{Span}}\nolimits}
\newcommand{\Imm}{\mathop{\mathrm{Im}}\nolimits}
\newcommand{\id}{\mathop{\mathrm{id}}\nolimits}
\newcommand{\Supp}{\mathop{\mathrm{Supp}}\nolimits}
\newcommand{\diam}{\mathop{\mathrm{diam}}\nolimits}
\newcommand{\coCH}{\mathop{\mathcal{coCH}}\nolimits}
\mathchardef\mh="2D
\begin{document}

\title{String topology via the coHochschild complex and local intersections}
\author{Manuel Rivera and Alex Takeda}

\newcommand{\Addresses}{{%
  \bigskip
  \footnotesize

  M.~Rivera, \textsc{Purdue University, Department of Mathematics, 150 N. University St. West Lafayette, IN 47907}\par\nopagebreak
  \textit{E-mail address}: \texttt{manuelr@purdue.edu}
  
   \medskip
   A.~Takeda, \textsc{Uppsala University, Department of Mathematics, Box 256, 751 05 Uppsala, Sweden}\par\nopagebreak
  \textit{E-mail address}: \texttt{alex.takeda@math.uu.se}
 }}

\begin{abstract}
    We construct an algebraic model for the Chas--Sullivan product and the Goresky--Hingston coproduct in string topology. The construction takes as its initial input a simplicial complex equipped with a local pairing on its simplicial chains, for instance, a homology manifold with its local intersection pairing. We define the two string topology operations on the coHochschild complex of a suitable coalgebra of chains, making use of local higher homotopies that control the compatibility of the pairing with the diagonal approximation coproduct. In the case of a closed oriented smooth manifold, we prove that our algebraic operations coincide, up to chain homotopy, with their geometric counterparts. The local nature of our constructions allows for arguments based on the method of acyclic models.
    \\
    \\
    {\it Mathematics Subject Classification} (2020). 55P50, 55U10; 16E40, 18G10. \\
    \emph{Keywords.} coalgebras, Hochschild (co)homology, string topology, simplicial complexes.
\end{abstract}

\maketitle

\vspace{-0.5cm} 

\section{Introduction}

In this article, we construct an explicit and tractable algebraic model for two fundamental operations in string topology: the Chas--Sullivan product \cite{CS} and the Goresky--Hingston coproduct \cite{GH}. Our approach makes no hypotheses about the connectivity of the underlying space, allows for an arbitrary commutative ring of coefficients, and extends previous models known in the simply connected setting that are based on rational homotopy theory, Hochschild homology, and Frobenius algebras \cite{LS,Abb,NW,RW}. Furthermore, our framework elucidates the necessity of locality conditions to obtain geometrically meaningful chain-level string operations and allows us to define the Goresky-Hingston coproduct in the context of homology manifolds. 

Our construction starts with a simplicial complex equipped with a pairing satisfying an appropriate locality condition and relies on two main ingredients:
\begin{enumerate}
    \item a model for the free loop space, given by the coHochschild complex of a suitable coalgebra associated to the simplicial complex, and
    \item a coherent family of higher homotopies that extend the pairing, are also appropriately local, and control the compatibility between the pairing and the diagonal approximation coproduct.
\end{enumerate}

The model for the free loop space in (1) follows the work of the first author in \cite{rivera2023algebraic} and has the following form. Suppose $X$ is a simplicial set, e.g. a simplicial complex with a coherent ordering of the vertices of each simplex and formal degeneracies. Replace $X$ by a homotopy equivalent simplicial set $\widetilde{X}$ with the property that every edge is invertible up to homotopy, so that its homotopy category is a groupoid. This can be done functorially by gluing the nerve of the groupoid with two objects and two non-identity morphisms along every edge. This step may be bypassed if $X$ is simply connected by contracting its $1$-skeleton. Fix a commutative ring $R$ and consider the normalized simplicial $R$-chains $\mathcal{C}(\widetilde{X})$ with Alexander-Whitney diagonal approximation coproduct. With a slight modification of the simplicial boundary map, $\mathcal{C}(\widetilde{X})$ becomes a \textit{categorical coalgebra}: a curved coalgebra on a set of objects (the vertices of $X$) satisfying certain basic properties. The \textit{coHochschild complex} \footnote[1]{The coHochschild complex can be defined for any categorical coalgebra $C$ and it calculates the derived cotensor product of $C$ with itself in a suitable homotopy theory of $C$-bicomodules.} of $\mathcal{C}(\widetilde{X})$, denoted by $\coCH_*(\mathcal{C}(\widetilde{X}))$, provides a chain model for the free loop space of the geometric realization $|X|$. The underlying graded $R$-module of $\coCH_*(\mathcal{C}(\widetilde{X}))$ is generated by \textit{necklaces} in $\widetilde{X}$. A necklace is a cyclically ordered sequence of simplices in $\widetilde{X}$, called the \textit{beads} of the necklace, such that the last vertex of a simplex coincides with the first vertex of the next one and one the beads, called the \textit{marked bead}, is distinguished. Necklaces represent families of loops in $|X|$ with base points in the marked bead.  
Furthermore, the complex of constant loops inside $\coCH(\mathcal{C}(\widetilde{X}))$, which will play an important role in our construction of the string topology coproduct, can be described in terms of sufficiently ``local" necklaces. The relevant notions, constructions, and results mentioned above are discussed in \cref{sec:modelingfreeloopspace,,sec:invertingedges,,sec:constantloops}.

The notion describing the structure in (2) is inspired by the pre-Calabi-Yau formalism developed by the second author, Kontsevich, and Vlassopoulos in \cite{KTV}. This chain-level structure may be constructed for any simplicial complex equipped with a local intersection pairing as is the case of a triangulated smooth oriented compact manifold or, more generally, a homology manifold. For instance, if $M$ is a smooth oriented compact manifold of dimension $n$, Poincaré duality gives rise to an intersection pairing
\[ \cap \colon H_*(M) \otimes H_*(M) \to R\] on the homology of $M$. Present the homotopy type of $M$ by a simplicial set $X_M$ obtained from a sufficiently fine triangulation and homotopically invert edges to obtain $\widetilde{X}_M$ as above. The required structure consists of a degree $-n$ map 
\[ \alpha \colon \mathcal{C}(\widetilde{X}_M) \otimes \mathcal{C}(\widetilde{X}_M)  \to \Omega \mathcal{C}(\widetilde{X}_M)  \otimes \Omega \mathcal{C}(\widetilde{X}_M),\]
where $\Omega \mathcal{C}(\widetilde{X}_M)$ denotes the cobar construction of $\mathcal{C}(\widetilde{X}_M)$, which is closed with respect to a certain differential and lifts $\cap$ in an appropriate sense. The map $\alpha$ encodes a system of maps 
reflecting the homotopical compatibility of the chain-level intersection pairing with the coproduct of $\mathcal{C}(\widetilde{X}_M)$. We call this structure a \textit{homotopy pairing} on $\mathcal{C}(\widetilde{X}_M)$ and it can be viewed as a coalgebraic analogue of a $2$-\textit{truncated pre-Calabi Yau algebra} \cite{KTV} or, in the language of \cite{TZ}, of a $V_2$-\textit{algebra}. Furthermore, $\alpha$ must satisfy a crucial \textit{locality} condition: it is non-zero only when the inputs are sufficiently close chains and, in such a case, its outputs remain accordingly close. Similar notions and constructions are considered in \cite{TZPD}; however, their approach assumes rational coefficients and, in order to recover the loop product of string topology and its $BV$-algebra structure, one assumes the underlying space is simply connected. 

The algebraic notion of a homotopy pairing can be defined for any categorical coalgebra. The geometric locality condition can be defined for any homotopy pairing on the categorical coalgebra of chains of a simplicial set arising from a simplicial complex. After recalling the role of local pairings in topology in \cref{sec:localpairings} and introducing homotopy pairings in \cref{sec:homotopypairings}, we prove that, after taking a barycentric subdivision to make the simplicial complex fine enough \cref{def:fine}, any local pairing extends to a local homotopy pairing, see \cref{thm:alphaExistence} and \cref{prop:alphaUnique} for precise statements. 

\begin{theorem}
    Let $X$ be a simplicial set determined by a coherent ordering of the simplices of a sufficiently fine simplicial complex $K$. Any degree $-n$ local pairing \[\theta \colon C_*(K) \otimes C_*(K) \to R\] on the chains of $K$ lifts to a local homotopy pairing
    \[ \alpha \colon \mathcal{C}(\widetilde{X}) \otimes \mathcal{C}(\widetilde{X})  \to \Omega \mathcal{C}(\widetilde{X})  \otimes \Omega \mathcal{C}(\widetilde{X}),\]
    with cohomology class uniquely determined by $[\theta]\in H^n(K \times K)$. In particular, the intersection pairing of a homology manifold can be lifted to a local homotopy pairing on its categorical coalgebra of chains. 
\end{theorem}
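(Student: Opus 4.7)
The plan is to argue by obstruction theory combined with the method of acyclic models, as hinted in the introduction. A homotopy pairing in the sense of \cref{sec:homotopypairings} is a cocycle in a certain total complex of multilinear maps $\mathcal{C}(\widetilde{X})^{\otimes 2} \to \Omega\mathcal{C}(\widetilde{X})^{\otimes 2}$, whose differential encodes the compatibility with the coproduct on $\mathcal{C}(\widetilde{X})$ and the cobar differential on $\Omega\mathcal{C}(\widetilde{X})$. This complex is naturally filtered by the total cobar-arity of the output, and the chain-level pairing $\theta$ sits in the bottom layer, realized as $\alpha_0 = \theta \otimes (e \otimes e)$, where the $e$'s are constant-loop markers. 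The locality hypothesis restricts us to the subcomplex of maps vanishing on pairs of chains supported too far apart and whose outputs have accordingly controlled support, so the entire argument must take place in this local subcomplex rather than the ambient one.

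I would construct $\alpha = \alpha_0 + \alpha_1 + \alpha_2 + \cdots$ by induction on cobar-arity. Given a partial lift satisfying the homotopy pairing equation modulo higher arity, the failure in the next layer is a cocycle in the local subcomplex, and extending the lift reduces to expressing this obstruction as a coboundary. The first step, constructing $\alpha_1$, corresponds to correcting the failure of $\theta$ to be strictly compatible with the Alexander--Whitney diagonal; this is done by hand using that on each simplex the relevant failure is a local coboundary coming from the ``basic'' diagonal higher homotopy. The subsequent obstructions are handled systematically by the same device.

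For these higher steps I would invoke an acyclic models argument. The ``models'' are pairs of simplices $(\sigma,\tau)$ of $K$ that fit inside a common contractible neighborhood of size bounded by the locality radius. Restricted to such a model, the relevant pieces of $\coCH_*(\mathcal{C}(\widetilde{X}))$ and $\Omega\mathcal{C}(\widetilde{X})$ are chain-homotopy equivalent to the chains on a contractible space, so the local obstruction complex is acyclic in the positive cobar-arities where the obstructions live. The fineness hypothesis \cref{def:fine} is precisely what guarantees that every pair of simplices within the locality radius lies in such a chart, so the acyclic models theorem produces primitives for each successive obstruction in a support-preserving way, and these primitives can be glued over pairs of charts to a global inductive correction $\alpha_{k+1}$. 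Iterating gives the required $\alpha$.

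The uniqueness of the cohomology class follows from the dual form of the same machinery: two local homotopy pairings extending cohomologous chain-level pairings $\theta,\theta'$ differ by a cocycle in the local complex whose leading component is a coboundary, and the acyclic models argument lifts a local primitive of $\theta-\theta'$, layer by layer, to a chain homotopy between the two pairings. The main obstacle, in my view, is not the abstract obstruction-theoretic scaffolding but the quantitative verification that the homotopies produced by acyclic models can be chosen to preserve the locality conditions: the contractions must not spread supports beyond the allowed neighborhoods. This is exactly where the ``sufficiently fine'' hypothesis, obtained via barycentric subdivision, does the real work, by making every local configuration contractible in a genuinely small region of $K$ and thereby reconciling the abstract acyclic models argument with the geometric locality constraints.
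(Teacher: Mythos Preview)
Your overall strategy—construct $\alpha$ layer by layer, with obstructions vanishing because they live over something contractible—is exactly the right idea, and it is what the paper does. However, the specific filtration you propose, by total cobar-arity of the output, does not work as stated, and your base case is not well-posed.

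Recall that $\alpha$ must be a map of $C_0^e \otimes C_0^e$-bicomodules: for simplices $\sigma \in C(a,b)$ and $\tau \in C(c,d)$, the output $\alpha(\sigma,\tau)$ must lie in $\Omega C(a,d) \otimes \Omega C(c,b)$. The only arity-zero elements of $\Omega C$ are the identities $\id_v \in \Omega C(v,v)$, so your proposed $\alpha_0(\sigma,\tau) = \theta(\sigma,\tau)\cdot (e \otimes e)$ can only be nonzero when $s(\sigma) = t(\tau)$ and $s(\tau) = t(\sigma)$, which almost never holds for a pair of intersecting simplices; there is simply no arity-zero slot for $\theta$ to sit in. A second obstacle is that the differential on $\Omega\mathcal{C}(\widetilde{X})$ is not homogeneous in arity: the curvature (nonzero on $x^2_\sigma$, $y^2_\sigma$) lowers arity by one, while the coproduct and the four boundary terms in \cref{eq:diffHom} raise it, so the system $d\alpha = 0$ is not triangular in the arity grading and cannot be solved by induction on it.

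The paper instead inducts on the total \emph{input} dimension $\dim\sigma + \dim\tau$, which does give a triangular system. At the base $\dim\sigma + \dim\tau = n$ one sets
\[ \alpha(\sigma,\tau) = \widetilde{\theta}(\sigma,\tau)\cdot \big(p_{s(\sigma),t(\tau)} \otimes p_{s(\tau),t(\sigma)}\big), \]
where the $p$'s are any simplicial paths inside $\sigma \cup \tau$; these are honest degree-zero elements of the correct hom-spaces (and are \emph{not} of arity zero). At $\dim\sigma + \dim\tau = n+1$ one computes the obstruction explicitly and finds that its degree-zero class is $\widetilde\theta$ applied to the ordinary simplicial boundary, hence vanishes precisely because $\theta$ is a chain map. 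From $n+2$ onward the obstruction is a closed element of strictly positive degree supported in $\sigma \cup \tau$; since that union is contractible, a primitive supported there exists. This is a direct pair-by-pair construction rather than a global acyclic-models argument, and the locality condition (support in $\sigma \cup \tau$) is automatic at each step. Uniqueness follows from the same induction applied to the difference of two lifts.
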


Given a homotopy pairing $\alpha$ on the categorical coalgebra of chains $\mathcal{C}(\widetilde{X})$ on a simplicial set $X$, we explicitly construct a degree $-n$ product
\[ \mu_\alpha \colon \coCH_*(\mathcal{C}(\widetilde{X})) \otimes \coCH_*(\mathcal{C}(\widetilde{X})) \to \coCH_*(\mathcal{C}(\widetilde{X}))\]
and a degree $1-n$ coproduct
\[ \lambda_\alpha \colon \coCH_*(\mathcal{C}(\widetilde{X})) \to \coCH_*(\mathcal{C}(\widetilde{X}))  \otimes \coCH_*(\mathcal{C}(\widetilde{X}
)),\]
see \cref{def:algebraicLoopProduct,def:algloopcoproduct}. The product $\mu_{\alpha}$ uses $\alpha$ to ``intersect" the marked beads of two necklaces and then concatenating the resulting beads. The coproduct $\lambda_{\alpha}$ starts with a necklace and uses $\alpha$ to intersect the marked bead with each of the other beads, taking the sum and splitting every term into two necklaces. These two algebraic operations are models for the geometrically defined Chas-Sullivan product and Goresky-Hingston coproduct in the sense of the following statements, which summarize our main results.

\begin{theorem}
    Let $X$ be a simplicial set determined by a coherent ordering of the vertices of each simplex in a simplicial complex and $\alpha$ a homotopy pairing on the categorical coalgebra of chains $\mathcal{C}(\widetilde{X})$. 
    \begin{enumerate}
        \item \emph{(\cref{prop:product})} The product $\mu_\alpha$ is a chain map, i.e., $\mu_{\alpha}$ satisfies the Leibniz rule with respect to the coHochschild differential.
        \item  \emph{(\cref{prop:coprodctmodconstant})} If $\alpha$ is local and $X$ is sufficiently fine, the failure of the coproduct $\lambda_{\alpha}$ to be a chain map, i.e., the commutator of $\lambda_{\alpha}$ and the coHochschild differential, is homotopic to a chain of constant loops. 
        \item \emph{(\cref{cor:loopproduct,cor:loopcoproduct})} If $X=X_M$ arises from a sufficiently fine triangulation of a smooth oriented closed manifold $M$ and $\alpha$ is any local homotopy pairing lifting the intersection pairing of $M$, then the quasi-isomorphism from $\coCH_*(\mathcal{C}(\widetilde{X}
        ))$ to the singular chains on $LM$ intertwines up to homotopy the operations $\mu_{\alpha}$ and $\lambda_{\alpha}$, and the geometrically defined loop product and coproduct of \cite{HW}, respectively. 
    \end{enumerate}
\end{theorem}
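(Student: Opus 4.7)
The three parts require different techniques. Items (1) and (2) are algebraic identities in the coHochschild complex to be verified by explicit combinatorial calculation on necklaces, while item (3) is a geometric comparison that I would handle via the method of acyclic models, crucially exploiting the locality of $\alpha$.

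For (1), I would write down $\mu_\alpha$ on a pair of necklaces $N_1, N_2$: it applies the pairing $\alpha$ to the pair of marked beads, producing an element of $\Omega \mathcal{C}(\widetilde{X}) \otimes \Omega \mathcal{C}(\widetilde{X})$, whose two cobar outputs are spliced into the two necklaces at the marked positions and concatenated into a single necklace. I would then expand $d_{\coCH} \mu_\alpha(N_1, N_2)$ using the three contributions to the coHochschild differential: the internal simplicial differential on each bead; the Alexander--Whitney coproduct applied to an unmarked bead (splitting it into two adjacent beads); and the cyclic/marked-bead splitting. Matching these against $\mu_\alpha(d_{\coCH} N_1, N_2) \pm \mu_\alpha(N_1, d_{\coCH} N_2)$ isolates precisely the cocycle equation defining a homotopy pairing in \cref{sec:homotopypairings}, namely that $\alpha$ is closed in $\Hom(\mathcal{C}\otimes \mathcal{C}, \Omega \mathcal{C} \otimes \Omega \mathcal{C})$. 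Closure of $\alpha$ thus translates term-for-term into the Leibniz rule for $\mu_\alpha$.

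For (2), the same bookkeeping applies to $\lambda_\alpha$, which uses $\alpha$ to pair the marked bead against each other bead of a single necklace and splits the result into two necklaces. Computing $d_{\coCH}\lambda_\alpha - (\lambda_\alpha \otimes 1 \pm 1 \otimes \lambda_\alpha)d_{\coCH}$ produces cancellations from the closedness of $\alpha$, leaving a residual sum of ``diagonal'' terms where $\alpha$ is evaluated on a bead together with its immediate cyclic neighbor. The locality hypothesis on $\alpha$, combined with the fineness of $X$, forces these residual outputs to be supported in small neighborhoods of single vertices, hence to lie in the constant-loop subcomplex described in \cref{sec:constantloops}. I would then construct an explicit primitive within that subcomplex using the degeneracy maps and the local homotopies produced by \cref{thm:alphaExistence}, giving the required chain homotopy to a chain of constant loops.

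For (3), I would apply the method of acyclic models, as signaled in the abstract. Both the algebraic operations $\mu_\alpha, \lambda_\alpha$ and the geometric Hingston--Wahl operations on $C_*(LM)$ are \emph{local} natural chain-level operations: they vanish unless the inputs have sufficiently close supports (transverse intersection for the product; self-intersection for the coproduct), and the supports of their outputs are controlled by those of the inputs. I would define an auxiliary category of local configuration models (pairs of simplices with prescribed incidence data, equipped with the intersection pairing), verify that the complex of natural local chain operations with values in $\coCH_*(\mathcal{C}(\widetilde{X}_M))$ and in $C_*(LM)$ is acyclic in positive degrees on these models, and check that on $H_0$ both operations induce the same classical pairings. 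Acyclic models then yields the claimed chain homotopies, compatible with the quasi-isomorphism $\coCH_*(\mathcal{C}(\widetilde{X}_M)) \to C_*(LM)$ from \cite{rivera2023algebraic}.

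The main obstacle will be item (3), specifically the setup of the acyclic-models framework: one needs a single category of local models that simultaneously accommodates the bead/marked-bead combinatorics of the coHochschild side and the Moore-path/transverse-intersection geometry on $LM$, together with a proof that the complex of natural local operations is acyclic on the models. Items (1) and (2), while notationally heavy, reduce to finite combinatorial identities once the pre-Calabi--Yau-type equation for $\alpha$ is expanded explicitly, and the locality argument for (2) is essentially forced by the constant-loop description in \cref{sec:constantloops}.
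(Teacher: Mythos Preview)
Your outline for part (1) is correct and matches the paper: the Leibniz rule for $\mu_\alpha$ is exactly the closedness of $\alpha$ in the Hom complex, together with the derivation property of $d_{\Omega C}$.

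For part (2), there are two concrete problems. First, the commutator you wrote, $d_{\coCH}\lambda_\alpha - (\lambda_\alpha \otimes 1 \pm 1 \otimes \lambda_\alpha)d_{\coCH}$, is not the right object; $\lambda_\alpha$ is a single coproduct $\coCH \to \coCH\otimes\coCH$, and what one must compute is $(\partial\otimes 1 + 1\otimes\partial)\lambda_\alpha - (-1)^{n-1}\lambda_\alpha\,\partial$. Second, your description of the residual terms is off. The failure does not come from ``$\alpha$ evaluated on a bead together with its immediate cyclic neighbor.'' In the paper the coproduct is built from a degree $+1$ \emph{scanning map} $\cals\colon \Omega C \to \Omega C \boxtimes C \boxtimes \Omega C$, and the residual terms are precisely the boundary of $\cals$ (\cref{prop:scanFailure}): they involve $\alpha$ applied to a piece of the marked bead $c_0$ and a \emph{vertex} (the source of $c_1$ or the target of $c_N$). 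By locality of $\alpha$, one of the two output tensor factors is then supported entirely in the single simplex $\sigma_0$, hence is $1$-local. No explicit primitive is built; one simply observes the error lands in the $1$-local subcomplex and invokes \cref{prop:mLocalEquivalences} to identify that with constant loops.

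For part (3), the paper does not set up a formal acyclic-models category. Instead it runs a direct induction on total input degree: for each generator one identifies a contractible cell of piecewise-linear loops in $LM$ (\cref{def:piecewiseLinearCells}) containing the images under both compositions, checks they agree in $H_0$ at the base degree (both equal $\theta(\sigma_0,\tau_0)$ times a point class), and then lifts the homotopy degree by degree using contractibility. Your abstract acyclic-models plan is in the same spirit, but you are missing the key structural ingredient for the coproduct comparison: the output of $\lambda_\alpha$ on a necklace $\sigma_0\{\sigma_1|\dots|\sigma_r\}$ is \emph{not} supported in a single contractible cell. The paper handles this by grouping the beads $\sigma_i$ into ``contiguous underlines'' (maximal runs of beads adjacent to $\sigma_0$), associating to each group $s$ a pair of contractible cells $S'_s, S''_s \subset LM$, decomposing both $\lambda_\alpha(x)$ and $\lambda_{\mathrm{GH}}(q(x))$ as $\sum_s(\dots)$ with the $s$-th summand supported in $S'_s \times S''_s$, and running a strengthened induction that produces the homotopy cell-by-cell. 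Without this decomposition the contractibility step of your argument would fail, and it is not clear how your proposed ``category of local configuration models'' would encode it.
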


The operations $\mu_\alpha$ and $\lambda_\alpha$ extend to the setting of possibly non-simply connected spaces and arbitrary coefficients previous formulas defined in the context of simply connected closed manifolds and rational coefficients \cite{Abb, naef2023string, NW}. In that more restricted setting, a strict version of $\alpha$, but not satisfying any geometric locality properties, can be provided by the Poincar\'e duality commutative differential graded algebra models of Lambrechts and Stanley \cite{LS}. The locality properties of our constructions allow us to apply arguments based on the method of acyclic models to prove the above results. Note that while the product $\mu_{\alpha}$ passes to homology for any homotopy pairing $\alpha$, in order for the coproduct $\lambda_{\alpha}$ to be meaningful, we assume $\alpha$ is local. 

One of our objectives is to isolate the essential ingredients used in string topology, motivated by the fact that the loop coproduct on homology is already sensitive to structure beyond the homotopy type of the underlying manifold \cite{Naef, NS, KP, naef2023string}. Our construction produces a loop product operation in the context of Poincaré duality complexes and a loop coproduct operation in the context of Poincar\'e duality complexes whose intersection pairing has a local representative. The existence of such a local representative is equivalent to being a homology manifold, as explained in \cite{ranicki1992algebraic, ranicki1999singularities, mccrory77, mccrory1979zeeman} and recalled in \cref{sec:localpairings}.

We conjecture that the full package of chain-level string topology, in particular the $\infty$-involutive Lie bialgebra (or $IBL_{\infty}$-algebra) structure on the $S^1$-equivariant chains of the free loop space modulo constant loops, depends on a lift of the intersection product as a coalgebraic version of a pre-Calabi–Yau (or $V_{\infty}$) algebra satisfying appropriate locality conditions. For the operations of interest in this paper, a $2$-truncated version of the structure suffices. 

\subsection*{Notation and conventions}
Throughout this paper, we fix a commutative ring $R$. We just write `complexes' to mean homologically-graded complexes of $R$-modules, also known as dg $R$-modules, with a differential of degree $-1$. Whenever we write `linear', `module' etc. we shall mean $R$-linear and $R$-module, and so on. We will write $\otimes=\otimes_R$ for the tensor product of $R$-modules. If $C$ is an $R$-coalgebra, $M$ a right $C$-comodule, and $N$ a left $C$-comodule, we will write $M \boxtimes N$ for the cotensor product of $M$ and $N$ over $C$. We follow the Koszul sign rule when applying maps in the graded setting. 

We use both simplicial sets and simplicial complexes in this paper. We will use the convention that simplices in a simplicial complex are \emph{closed}, that is, contain their faces. For ease of notation, we will not distinguish between the `abstract' $n$-simplex and the `topological' $n$-simplex, its realization, and denote both by the symbol $\triangle^n$; it will be clear which one we mean from the context.  We will use the same notation $|-|$ for the (thin) geometric realization of a simplicial set and for the polytope of a simplicial complex.

We will use different types of chains and distinguish them by notation. In particular, we will use $S_*(Y)$ for singular (simplicial) chains in a topological space $Y$ and reserve $C_*(-)$ for either the normalized simplicial chains in a simplicial set or the simplicial chains in a simplicial complex, that is, finite linear combinations of oriented simplices.

\subsection*{Acknowledgments} The first author acknowledges the excellent working conditions at Northwestern University, where part of this research was carried out and where valuable discussions with E. Getzler, F. Naef, A. Oancea, S. Saneblidze, D. Sullivan, B. Tsygan, and M. Zeinalian took place during the workshop \textit{Homotopical Algebra in Geometry, Topology, and Physics}. Both the first author and the workshop received support from the NSF grant DMS-245405. The second author also thanks H. Bursztyn, T. Ekholm, T. Kragh, J. Qiu, B. Vallette, N. Wahl, and T. Willwacher for helpful discussions, Uppsala University for the wonderful working environment, and the Knut and Alice Wallenberg foundation for the support.

\section{Modeling the free loop space}\label{sec:modelingfreeloopspace}
We describe an algebraic construction that produces a model for the chains on the free loop space directly from a suitable chain model of the underlying space.

\subsection{Categorical coalgebras}
We shall regard the normalized chains on a simplicial set as a `coalgebra with many objects' through the notion of a categorical coalgebra. Recall that a \textit{categorical coalgebra} consists of the following data:
\begin{itemize}
    \item a non-negatively graded flat module $C = \bigoplus_{i=0}^\infty C_i$,
    \item a linear counital coassociative coproduct $\Delta \colon C \to C\otimes C$ of degree $0$ (with counit map $\varepsilon \colon C \to R$) whose \emph{set of objects} (called \emph{set-like elements} in \emph{op.cit.})
    \[ \mathscr{S}_C \coloneqq \{x \in C\ |\ \Delta(x)=x\otimes x, \epsilon(x)=1\} \]
    generates $C_0$; namely the inclusion map $\mathcal{S}_C \hookrightarrow C_0$ induces an isomorphism $R[\mathscr{S}_C] \cong C_0$, 
    \item a linear map $d: C\to C$ of degree $-1$ on $C$ which is a graded coderivation for $\Delta$ and vanishes on $C_1$, and
    \item a linear map $\eta \colon C \to R$ of degree $-2$, called the \textit{curvature}, satisfying $\eta \circ d = 0$ and
    \begin{align} \label{eq:curvature} 
        d^2 = (\eta \otimes \id - \id \otimes\ \eta)\circ \Delta.
    \end{align}
\end{itemize}
Any categorical coalgebra $(C,d, \Delta, \eta)$ has a natural $C_0$-bicomodule structure (compatible with $d$) with structure maps given by
\begin{align} \label{eq:bicomoduleMaps}
    \rho_r \colon C\xrightarrow{\Delta} C\otimes C \xrightarrow{\text{id} \otimes \pi_0} C \otimes C_0 \quad \text{and} \quad \rho_l \colon C \xrightarrow{\Delta} C \otimes C\xrightarrow{\pi_0 \otimes \text{id} } C_0 \otimes C,
\end{align}
where $\pi_0 \colon C \to C_0$ is the natural projection. Categorical coalgebras form a category by defining a \textit{morphism} from $(C, d, \Delta,\eta)$ to $(C',d',\Delta', \eta')$ to consist of a pair $f=(f_0,f_1)$ where
\begin{enumerate}
    \item $f_0:(C,\Delta)\rightarrow (C',\Delta')$ is a morphism of graded coalgebras 
    \item $f_1: C \rightarrow C_0^\prime$ is a $C_0^{\prime}$-bicomodule map of degree $-1$
\end{enumerate}
satisfying
\begin{align*}
    & f_0\circ d = d' \circ f_0 + (\overline{f}_1 \otimes f_0)\circ(\Delta - \Delta^{\mathrm{op}}) \text{ and}\\
    & \eta'\circ f_0 = \eta+ \overline{f}_1 \circ d + (\overline{f}_1\otimes \overline{f}_1)\circ \Delta , 
\end{align*}
where $\overline{f}_1 = \varepsilon' \circ f_1$ and $\varepsilon'$ is the counit of $C'$. The composition of two morphisms is defined by
\[  (g_0,g_1)\circ(f_0,f_1)= (g_0 \circ f_0, g_1\circ f_0 + g_0 \circ f_1).
\]
Denote by $\mathsf{cCoalg}_R$ the category of categorical coalgebras.

\subsubsection{Normalized chains} Our main example of a categorical coalgebra is given by a suitable variation of the differential graded (dg) coalgebra of normalized chains on a simplicial set $X$. Denote by $C_*(X)=(\bigoplus_{i=0}^{\infty}C_i(X), \Delta, \delta)$ the dg coalgebra of normalized simplicial chains; namely, $C_k(X)= \Span_R(X_k)/D(X_k)$, where $D(X_k)$ denotes the submodule of $\Span_R(X_k)$ generated by degenerate $k$-simplices in $X$, \[\delta \colon C_*(X) \to C_{*-1}(X)\] is the classical simplicial boundary map defined as the alternating sum of faces, and \[\Delta \colon C_*(X) \to C_*(X) \otimes C_*(X)\] the Alexander-Whitney diagonal approximation coproduct defined on any $\sigma \in X_n$ by

\[\Delta(\sigma) = \sum_{i=0}^n \sigma(0,\ldots,i) \otimes \sigma(i,\ldots n),\]
where $\sigma(0,\ldots,i)$ and $\sigma(i,\ldots n)$ denote the first $i$-dimensional face and last $(n-i)$-dimensional face, respectively, of $\sigma$. 

This construction does not define a categorical coalgebra since $\delta \colon C_1(X) \to C_0(X)$ may not vanish in general. However, we modify $\delta$ as follows. Let $e\colon C_*(X)\rightarrow R$ be the $1$-cochain induced by sending non-degenerate $1$-simplices to $1_R$ and degenerate $1$-simplices to $0$. Define 
\begin{align}\label{eq:modifiedDifferential} 
    d = \delta  - (\text{id} \otimes e - e\otimes \text{id}) \circ \Delta 
\end{align}
and
\begin{align}\label{eq:curvatureChains} 
    \eta = (e \otimes e) \circ \Delta + e \circ \partial.
\end{align}
The map $d \colon C_*(X) \to C_{*-1}(X)$ does not square to zero in general, but its failure to be a differential is controlled by the $2$-cocycle $\eta$ in the sense of \cref{eq:curvature}. Note that $\eta$ vanishes on $2$-simplices with non-degenerate faces. We denote $\mathcal{C}(X):=(C_*(X), d, \Delta, \eta)$. A straightforward check yields that the assignment $X \mapsto \mathcal{C}(X)$ defines a functor
\[ \mathcal{C} \colon \mathsf{sSet} \to \mathsf{cCoalg}_R.\]
We describe $\mathcal{C}(\triangle^n)$ more explicitly for $\triangle^n \in \mathsf{sSet}$ the standard $n$-simplex. In this case, we have a canonical basis for the normalized chains $C_*(\triangle^n)$ given by non-degenerate simplices of $\triangle^n$. On any such $\sigma \in (\triangle^n)_j$, we have 
\begin{align} \label{eq:removefirstandlast}
    d(\sigma)= \sum _{i=1}^{j-1} (-1)^i\delta_i(\sigma),
\end{align}
where $\delta_i \colon (\triangle^n)_j \to (\triangle^n)_{j-1}$ is the $i$-th face map. In other words, $d$ is given by removing the first and last faces from the usual simplicial boundary map. Also note that, in this example, any non-degenerate $2$-simplex has non-degenerate faces, so the curvature term is zero. The same argument yields that if $X$ is a simplicial set obtained from a simplicial complex through a coherent ordering of the vertices of each simplex and adding formal degeneracies (so that all simplices are uniquely determined by their vertices), then, on any non-degenerate simplex, the differential of $\mathcal{C}(X)$ is also given by \cref{eq:removefirstandlast}.

\subsection{Cobar construction}\label{subsec:cobar}
Any categorical coalgebra gives rise to a natural dg category through a functor 
\[ \Omega \colon \mathsf{cCoalg}_R \to \mathsf{dgCat}_R, \]
called the \textit{cobar construction}, which we now recall. Given a categorical coalgebra $C=(C,d, \Delta, \eta)$, the objects of $\Omega(C)$ are the elements of $\mathcal{S}_C$. For any two $a,b \in \mathcal{S}_C$ define
\[\Omega(C)(a,b)=
\bigoplus_{n=0}^{\infty} R_a \boxtimes (s^{-1}\overline{C})^{\boxtimes n} \boxtimes R_b, \]
where $\boxtimes$ denotes the cotensor product over the coalgebra $C_0$, for any $a \in \mathcal{S}_C$, $R_a$ is $R$ equipped with the $C_0$-bicomodule determined by the inclusion $\{a\} \hookrightarrow \mathcal{S}_C$, $\overline{C}:= C/C_0$, and $(s^{-1}\overline{C})^{\boxtimes 0}:= C_0$. The identity morphism $\id_a$ at $a \in \mathcal{S}_C$ corresponds to the unit of $R$ through the following identification  \[1_R \in R \cong R_a \cong R_a \boxtimes (s^{-1}\overline{C})^{\boxtimes 0} \boxtimes R_a \subseteq \Omega(C)(a,a)_0, \]
and extending this assignment $a \mapsto \id_a$ linearly gives us a map
\[ \gamma \colon C_0 \to \Omega(C). \]
Each differential 
\[ d_{\Omega C} \colon \Omega(C)(a,b) \to \Omega(C)(a,b)\]
is induced by the extending the sum $d+ \Delta + \eta$ as a derivation with appropriate degree shifts. The equation \[d_{\Omega C} \circ d_{\Omega C} =0\] is equivalent to the compatibility of $d$ and $\Delta$, the coassociativity of $\Delta$, and the curvature equation relating $\eta$, $d$, and $\Delta$. The composition of morphisms is given by concatenation of monomials.

For any simplicial set $X$, the dg category $\Omega \mathcal{C}(X)$ has as objects the elements of the set $X_0$ of vertices in $X$ and the chain complex of morphisms $\Omega \mathcal{C}(X)(v,w)$ between any  $v, w\in X_0$ is generated as a graded module by the identity $\id_v$ when $v = w$, together with symbols
\[ \{\sigma_1|\dots|\sigma_N\},\ \text{where $\sigma_i \in \bigcup_{j=1}^\infty X^{\text{nd}}_j$ with}\ s(\sigma_1)=v, t(\sigma_N) = w, \text{ and }s(\sigma_i) = t(\sigma_{i-1})\ \text{for}\ 1 \le i \le N \]
in homological degree $\sum_{i=1}^N \dim(\sigma_i) - N$. Above, $X^{\text{nd}}_j \subseteq X_j$ denotes the set of non-degenerate $j$-simplices in $X$ and \[s, t \colon \bigcup_{j=1}^\infty X_j \to X_0\] denote the first (source) and last (target) vertex maps, respectively. A sequence of simplices $\{\sigma_1 | \dots |\sigma_N\}$ in $X$ satisfying the above conditions is called an \textit{open necklace} in $X$ from $v$ to $w$ and the simplices $\sigma_1,\ldots,\sigma_N$ are called the \textit{beads} of the (open) necklace. 

The cobar construction models the dg category of paths in a space. More precisely, given any topological space $Z$, denote by $\mathcal{P}(Z)$ be the dg category whose objects are the points of $Z$ and, given $v,w \in Z$ we have that
$\mathcal{P}(Z)(v,w):= S_*(P_{v\to w}Z)$, the chain complex obtained by applying the normalized simplicial singular chains functor to the topological space $P_{v \to w}Z$ of Moore paths in $Z$ from $v$ to $w$. Composition is induced by concatenation of paths. This construction gives rise to a functor \[\mathcal{P} \colon \mathsf{Top} \to \mathsf{dgCat}_R.\]

\begin{theorem} \label{thm:cobarmodel}
If $X$ is a simplicial set whose homotopy category is a groupoid, then $\Omega \mathcal{C}(X)$ is naturally quasi-equivalent to $\mathcal{P}(|X|)$, the dg category of paths in the geometric realization of $X$.
\end{theorem}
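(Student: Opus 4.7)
The proof strategy essentially restates and streamlines the main result of \cite{rivera2023algebraic}, and I would organize it in three steps. First, I would construct an explicit natural dg functor $F \colon \Omega \mathcal{C}(X) \to \mathcal{P}(|X|)$ that acts as the identity on objects (via $X_0 \hookrightarrow |X|$) and, on hom complexes, sends a necklace $\{\sigma_1|\dots|\sigma_N\}$ to the concatenation of canonical cubical chains in the Moore path space. To each non-degenerate $j$-simplex $\sigma \in X_j^{\mathrm{nd}}$ one assigns a Szczarba-type chain $\Theta(\sigma) \in S_{j-1}(P_{s(\sigma)\to t(\sigma)}|X|)$, parametrized by a $(j{-}1)$-cube of Moore paths inside $|\sigma|$ from the first to the last vertex. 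The essential check is that $\partial \Theta(\sigma)$ decomposes into exactly the three pieces of the cobar differential: the boundary faces omitting an interior vertex reproduce the modified simplicial boundary $d$ of \cref{eq:removefirstandlast}, the faces along which the path factors through an intermediate vertex yield the Alexander–Whitney splitting corresponding to $\Delta$, and a correction accounts for the curvature $\eta$ arising in degenerate $1$-simplex cases. Composition is preserved on the nose by concatenation of cubes.

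Second, I would verify that $F$ is a quasi-equivalence. Essential surjectivity up to homotopy is immediate: every point of $|X|$ is path-connected to some vertex, so is isomorphic in $\pi_0 \mathcal{P}(|X|)$ to an object in the image. The substantive claim is that, for every $v,w \in X_0$, the chain map
\[ F_{v,w} \colon \Omega \mathcal{C}(X)(v,w) \longrightarrow S_*(P_{v\to w}|X|) \]
is a quasi-isomorphism. Invoking the groupoid hypothesis, one reduces to the case $v=w$: fix a necklace representing an invertible homotopy class from $v$ to $w$ in $\Omega \mathcal{C}(X)$ and transport both complexes along it, matching on the topological side by path concatenation; this intertwines $F_{v,w}$ with the endomorphism version $F_{v,v}$ up to quasi-isomorphism.

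Third, the case $v=w$ is the extended Adams cobar theorem proven in \cite{rivera2023algebraic}, which identifies $\Omega \mathcal{C}(X)(v,v)$ with $S_*(\Omega_v|X|)$ as dg algebras. The argument there proceeds by filtering the source by necklace length and comparing, term by term, with the Serre spectral sequence of the path-loop fibration, or alternatively by an acyclic models argument on the category of simplices lying over $X$. \textbf{The main obstacle}, and the only place the groupoid hypothesis is genuinely used, is the computation on $\pi_0$: the hypothesis is exactly what forces $H_0(\Omega\mathcal{C}(X)(v,v))$ to coincide with $R[\pi_1(|X|,v)]$ rather than with the semigroup ring of a proper quotient monoid, so that both the $E^2$-page of the spectral sequence and the base case of the induction match. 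Without the groupoid condition, $F_{v,v}$ already fails to be surjective on $H_0$, and no higher comparison step can repair it; conversely, once $\pi_0$ is correct, the standard filtered comparison yields quasi-isomorphism in all degrees, completing the proof.
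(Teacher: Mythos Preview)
Your construction of the comparison functor is essentially the paper's: what you call Szczarba-type cubes of paths is exactly the Adams map $a_X$ the paper writes down explicitly, and the identification $\Omega\mathcal{C}(X)(v,w)\cong C^\Box_*(P^\Box_{v\to w}X)$ makes the compatibility with $d+\Delta+\eta$ automatic.

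The divergence is in how quasi-equivalence is established. The paper does \emph{not} reduce to $v=w$ and then run a spectral-sequence or acyclic-models comparison. Instead it argues at the space level: the map $a_X\colon |P^\Box_{v\to w}X|_\Box\to P_{v\to w}|X|$ is fitted into a map between two (quasi-)fibration towers over $|X|$, with contractible total spaces in the middle row, and the five-lemma on long exact sequences of homotopy groups gives that $a_X$ is a weak equivalence. The groupoid hypothesis enters precisely in checking the Dold--Thom criterion for the left-hand column to be a quasi-fibration. Only afterwards does one pass to chains via $i_\Box$, $S^\Box_*(a_X)$, and a cubical-to-simplicial subdivision.

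Both routes are valid. The paper's fibration comparison handles all pairs $(v,w)$ simultaneously, avoids any transport step, and makes the role of the groupoid hypothesis geometrically transparent. Your approach is more algebraic and closer to Adams' original 1956 argument; it would port more easily to settings without an underlying space, at the cost of two extra pieces of bookkeeping (showing necklace-transport is a chain equivalence on the source side, and controlling convergence and the $E_2$-identification of the filtration spectral sequence). One small inaccuracy: the proof in the reference you cite for the $v=v$ case (and in \cite{RS1}, which the paper follows) is in fact the quasi-fibration argument, not the spectral-sequence one you sketch.
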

We sketch the proof of the above theorem below. 

\subsubsection{A family of paths associated to an open necklace}\label{sec:familyOfPaths}
We now recall an explicit map realizing the quasi-equivalence in the above theorem. The main idea, originally due to Adams \cite{adams1956cobar}, is to associate to any open necklace $\{\sigma_1|\dots|\sigma_N\}$ in $X$ a family of paths in $|X|$ from $s(\sigma_1)$ to $t(\sigma_N)$ parametrized by a cube of dimension $\sum_{i=1}^N \dim(\sigma_i)-N$, in a way that is compatible with the cobar differential. 

Denoting by $v_0, v_1, \ldots, v_n$ the vertices of $\triangle^n$, there is a natural isomorphism of chain complexes
\[\Omega \mathcal{C}(\triangle^n)(v_0,v_n) \cong C^{\Box}_*(\Box^{n-1}),\] where the right hand side denotes the (normalized) cubical chain complex on the standard cubical complex structure of the $(n-1)$-cube. This isomorphism is determined by sending a sequence $\{\sigma_1 | \cdots |\sigma_N\} \in \Omega(\triangle^n)(v_0,v_n)_0$, where $\sigma_i$ is a $1$-simplex in $\triangle^n$, to the $0$-cube (vertex) in $\Box^{n-1}$ determined by the string $(\varepsilon_1,\ldots, \varepsilon_{n-1})$ where $\varepsilon_j=1$ if $v_j \in \{ s(\sigma_2), s(\sigma_2),\ldots, s(\sigma_N)\}$ and $\varepsilon_j=0$ otherwise. 

More generally, for any simplicial set $X$ and $v,w \in X_0$ there is a cubical set $P^\Box_{v \to w}X$ together with a natural isomorphism
\[ \Omega \mathcal{C}(X)(v,w) \cong C^\Box_*(P^{\Box}_{v \to w}X).\] The $k$-cubes in $P^\Box_{v \to w}X$ are labeled by open necklaces $\{\sigma_1|\dots|\sigma_N\}$ in $X$
with $k= \sum_{i=1}^N \dim(\sigma_i) - N$, $s(\sigma_1)=v$, $t(\sigma_N)=w$; and these cubes are glued according to how open necklaces from $v$ to $w$ fit together inside $X$. The boundary of a cube labeled by an open necklace exactly corresponds to applying the cobar differential $d_{\Omega C}$. 

The above interpretation of the morphisms in the cobar construction as the cubical chains on a cubical set may be used to construct a quasi-equivalence of dg categories
\[\mathcal{a}_X \colon \Omega \mathcal{C}(X) \to \mathcal{P}(|X|)\]
as we now explain.  We recall the construction, originally due to Adams, of a continuous map
\[ a \colon |P^\Box_{v \to w}X|_\Box \to P_{v\to w}|X| \]
from the (cubical) geometric realization to the space of continuous paths in the (simplicial) geometric realization. Parametrize the topological simplex by
\[ \triangle^k = \{(t_1,\dots,t_k)\ |\ 0 \le t_1 \le \dots \le t_k \le 1\} \subset \mathbb{R}^k \]
noting that in this parametrization the vertex $(i)$ of $\triangle^k$ has coordinates $(0,\dots,\underset{i}{0},1,\dots,1)$. We have a continuous map
\begin{align}\label{eq:cubeToSimplex}
    q \colon \Box^k &\to \triangle^k\\
    (r_1,\dots,r_k) &\mapsto (r_1 r_2 \dots r_k, r_2r_3 \dots r_k,\dots,r_{k-1}r_k,r_k)
\end{align}
which restricts to a homeomorphism in the interior. Denote
\[ E^{k-1} = \{(p_1,\dots,p_{k-1},t)\ | (p_1,\dots, p_{k-1}) \in \Box^{k-1}, 0 \le t \le (\sum_{i=1}^{k-1} p_i) + 1\} \subset \mathbb{R}^{k}.\]
Define a continuous piecewise affine map $E^{k-1} \to \Box^k$ by 
\begin{align*}
    (p_1,\dots,p_{k-1},t) \mapsto \begin{cases}
        (1-t,1,1\dots,1), & 0 \le t \le p_1 \\
        (1-p_1, 1-t+p_1,1,\dots,1) & p_1 \le t \le p_1 + p_2 \\
        (1-p_1, 1-p_2, 1-t+p_1+p_2,1,\dots,1) & p_1 + p_2 \le t \le p_1 + p_2 + p_3 \\
        \dots & \dots \\
        (1-p_1,1-p_2,\dots,1-t + p_1+\dots+ p_{k-1}) & \sum p_i \le t \le \sum p_i + 1
    \end{cases} 
\end{align*}
Composing with the map $q$, we obtain a map $E^{k-1} \to \triangle^k$ which sends
\[ (p_1,\dots,p_{k-1},0) \mapsto (1,1,\dots,1) = (0), \quad (p_1,\dots,p_{k-1},1+\sum_{i=1}^{k-1}p_i) \mapsto (0,0,\dots,0) = (k).\]
This associates to any line $\{(p_1,\dots,p_{k-1},t)\}$ with fixed $(p_1,\dots,p_{k-1})$ and varying $t$ a Moore path in $\triangle^k$ from the vertex $(0)$ to the vertex $(k)$, giving a continuous map
\begin{align}\label{eq:adamstw}
    \alpha \colon \Box^{k-1} \to P_{(0)\to(k)} \triangle^k.
\end{align}
The map $\alpha$ induces a map 
\begin{align} \label{eq:adamsgeometric}
    a_X \colon |P^\Box_{v \to w}X|_\Box \to P_{v\to w}|X|
\end{align} 
as follows. Consider a cube $\Box^{|\sigma_1|-1} \times \cdots \times \Box^{|\sigma_N|-1}$ in the geometric realization $|P^\Box_{v \to w}X|$ labeled by an open necklace $\{\sigma_1 | \cdots | \sigma_N\}$ in $X$. On any point $u=(u_1,\ldots, u_N)$ in this cube (with $u_i \in \Box^{|\sigma_i|-1}$) define
\[ a_X(u)= \big(P{\sigma_1} \circ \alpha(u_1)\big) * \cdots * \big(P{\sigma_N} \circ \alpha(u_N)\big),\]
where $P{\sigma} \colon P_{(0) \to (k)}\triangle^k \to P_{s(\sigma) \to t(\sigma)}|X|$
denotes the map on path spaces determined by a simplex $\sigma \in X_k$. 
\begin{theorem}\label{thm:homotopicWhenGroupoidPaths}
    If $X$ is a simplicial set whose homotopy category is a groupoid then 
    \[ a_X \colon |P^\Box_{v \to w}X|_\Box \to P_{v\to w}(|X|) \] 
    is a weak homotopy equivalence.
\end{theorem}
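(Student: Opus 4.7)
The plan is to prove this equivalence by reducing to a known extension of the Adams cobar theorem to simplicial sets with groupoidal homotopy categories, proceeding in three stages. First, I reduce to the based case $v=w$: the groupoid hypothesis produces an open necklace $\nu$ from $w$ to $v$, and concatenation with $\nu$, respectively with the associated path $a_X(\nu)$, yields compatible homotopy equivalences $|P^\Box_{v\to w}X|_\Box \simeq |P^\Box_{v\to v}X|_\Box$ and $P_{v\to w}|X|\simeq P_{v\to v}|X|$, so it suffices to handle $v = w$.

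Second, I would replace $X$ by a Kan fibrant simplicial set $X'$ whose homotopy category is still a groupoid (any fibrant replacement preserving this condition will do). The singular path space is obviously invariant under such a replacement, while homotopy invariance of the cubical model $|P^\Box_{v\to v}(-)|_\Box$ under simplicial weak equivalences preserving the groupoid condition is the crucial input; this is precisely the content of the extension of Adams' theorem established by the first author with Zeinalian and recalled in \cite{rivera2023algebraic}. For $X'$ a Kan complex with groupoidal homotopy category, the cubical set $P^\Box_{v\to v}X'$ is naturally a cubical model of (a variant of) Kan's loop groupoid, and $a_{X'}$ identifies up to homotopy with the classical comparison map to $\Omega|X'|$, which is a weak equivalence.

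The main obstacle is the homotopy invariance step: it is the only place where the groupoid hypothesis is genuinely used and constitutes the essential extension beyond the simply connected case. If I needed to reprove it from scratch, I would proceed by a cell-attachment argument: for pushouts $X \cup_{\partial\triangle^n}\triangle^n$ with $n \geq 2$ (which preserve the groupoid condition), show that the cubical path model satisfies a Mayer--Vietoris-type decomposition compatible with the analogous one on singular path spaces, reducing to the base case of a single attached simplex. The key technical ingredients are the explicit formula \cref{eq:adamstw} for the map $\alpha$, which is manifestly a homotopy equivalence into $P_{(0)\to(k)}\triangle^k$, combined with the modified differential \cref{eq:modifiedDifferential}: the subtraction of the counit terms is precisely what causes the cobar construction to view $\pi_1(|X|,v)$ as a group rather than merely a monoid, and this algebraic input is what matches with the topological path space only after the groupoid hypothesis is imposed.
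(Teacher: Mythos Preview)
Your approach differs substantially from the paper's, and its central step has a circularity problem. The paper argues directly by a comparison of fibration sequences: it constructs a cubical model $|P^\Box_v X|_\Box$ for paths with one free endpoint, shows this total space is contractible, verifies via the Dold--Thom criterion that the sequence $|P^\Box_{v\to v}X|_\Box \to |P^\Box_v X|_\Box \to |X|$ is a quasi-fibration, and then compares it with the standard path fibration $P_{v\to v}|X| \to P_v|X| \to |X|$ using the $5$-lemma on the long exact sequences of homotopy groups. No homotopy-invariance statement for the cubical path model is used as input.

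You instead defer the essential content to the claim that $|P^\Box_{v\to v}(-)|_\Box$ is invariant under weak equivalences of simplicial sets with groupoidal homotopy category. But the proofs of that invariance in the sources you point to proceed precisely by first establishing that $a_X$ is a weak equivalence (via the fibration argument just described) and then reading off invariance from that of $P_{v\to v}|X|$; quoting invariance as an input here is circular. Your fallback cell-attachment sketch does not escape this: controlling how the cubical loop model changes along a pushout $X \cup_{\partial\triangle^n}\triangle^n$ is no easier than the original problem, and a Mayer--Vietoris decomposition for loop spaces is delicate since looping does not commute with homotopy pushouts. A minor point: your remark about the modified differential is off-target. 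The subtraction in \cref{eq:modifiedDifferential} is bookkeeping ensuring $d|_{C_1}=0$ so that $\mathcal{C}(X)$ is a categorical coalgebra; the mechanism by which the cobar construction sees $\pi_1$ as a group is not this modification but rather the groupoid hypothesis together with the curvature term and the simplices witnessing invertibility of edges.
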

\begin{proof}
The theorem follows by the same argument as the proof of \cite[Theorem 1]{RS1}. See also \cite[Corollary 4.2]{CHL}. We provide a sketch for completeness. Without loss of generality, we may assume that $X$ is connected and $v=w$ so that
$P_{v\to v}(|X|)$ is the space of Moore loops based at $v \in |X|$. The map $a_X$ fits into the following commutative diagram:
\[\begin{tikzcd}
	{|P^\Box_{v \to v}X|_\Box } & {P_{v\to v}(|X|)} \\
	{|P^\Box_{v }X|_\Box } & {P_{v}(|X|)} \\
	{|X|} & {|X|}
	\arrow["{a_X}", from=1-1, to=1-2]
	\arrow[from=1-1, to=2-1]
	\arrow[from=1-2, to=2-2]
	\arrow["{a'_X}", from=2-1, to=2-2]
	\arrow[from=2-1, to=3-1]
	\arrow[from=2-2, to=3-2]
	\arrow["{\text{id}}", from=3-1, to=3-2]
\end{tikzcd}\]
In the above diagram, $P_v(|X|)$ denotes the contractible space of Moore paths based at $v$ with free endpoint, $|P^\Box_{v }X|_\Box$ is constructed in a similar way to $|P^\Box_{v \to w}X|_\Box$ with cubes labeled by necklaces but removing the condition that the last bead of a necklace must have vertex $w$ as a target; namely, via necklaces with ``free" last vertex. One can then construct a continuous map
$a'_X \colon |P^\Box_vX|_\Box \to P_v(|X|)$ in a similar way to $a_X$ making the top square commute, where the two top vertical maps are natural inclusions. The space $|P^\Box_vX|_\Box$ can be shown to be contractible and, consequently, $a'_X$ is a weak homotopy equivalence. The bottom vertical maps are natural projections given by evaluating on the free endpoint of a path (interpreted appropriately on the left vertical map) making the bottom square commute. The sequence of vertical maps in the right hand side of the diagram define the path fibration with fiber the based loop space. The sequence of maps on the left hand side define a quasi-fibration as can be checked by applying the classical Dold-Thom criterion. Passing to long exact sequences on homotopy groups and using the $5$-lemma yields that $a_X$ is a weak homotopy equivalence. \end{proof}

\noindent \textit{Proof of \cref{thm:cobarmodel}: } After passing to chains, the map $a_X$ above induces a quasi-isomorphism
 \[\mathcal{a}_{X}^{v\to w} \colon \Omega \mathcal{C}(X)(v,w)\cong C^\Box_*(P^\Box_{v \to w}X) \xrightarrow{i_\Box} S^{\Box}_*(|P^\Box_{v \to w}X|_\Box) \xrightarrow{S^\Box_*(a)} S^\Box_*( P_{v\to w}(|X|))\xrightarrow{Sd_\Box} S_*( P_{v\to w}(|X|)),\]
where $i_\Box$ is the natural map from the cubical chains of a cubical set to the singular cubical chains of its geometric realization, $S_*^{\Box}$ denotes the (normalized) singular cubical chains functor, and $Sd_\Box$ a subdivision map from singular cubical chains to singular simplicial singular chains. All of the maps $\mathcal{a}_{X}^{v \to w}$ together determine the desired quasi-equivalence of dg categories
\[ \mathcal{a}_X \colon \Omega \mathcal{C}(X) \xrightarrow{\simeq} \mathcal{P}(|X|). \]\qed

\subsection{CoHochschild complex}

For any categorical coalgebra $C=(C,d_C,\Delta,\eta)$ the \textit{coHochschild complex of $X$} is defined as the chain complex \[(\mathcal{coCH}_*(C), \partial) :=C\underset{C_0^e}{\boxtimes} \Omega C,\] 
where the dg category $\Omega C$ is considered as the dg $C_0$-bicomodule $\bigoplus_{v,w \in \mathcal{S}_C} \Omega C(v,w)$ with $C_0$-coactions induced by the source and target maps. Explicitly, $\mathcal{coCH}_k(C)$ is generated by monomials $x=x_0\{x_1 | \dots |x_N\}$, where 
\begin{itemize}
\item $x_i \in C_{n_i}$ and $N \geq 0$ is a non-negative integer

    \item $n_0 \geq 0$ and $n_i>0$ for $i=1,\ldots,N$,  
    \item $n_0 + n_1+ \cdots+ n_N-N=k$, and
    \item $x_0 \otimes x_1$ and $x_N \otimes x_0$ are elements in the cotensor product $C \underset{C_0}{\boxtimes} C \subset C \otimes_R C$. 
\end{itemize}
The differential
\[\partial \colon \mathcal{coCH}_k(C) \to \mathcal{coCH}_{k-1}(C)\]
is defined by
\[\partial= d_C \boxtimes \text{id}_{\Omega C} + \text{id}_C \boxtimes d_{\Omega C}+ \tau,\]
where
\begin{align}\label{eq:coHochDifferential}
    \tau(x_0\{x_1 | \dots |x_N\})= &-(-1)^{x^{(1)}_0} x_0^{(1)}\{x_0^{(2)}|x_1 | \dots |x_N\} \\
    &+(-1)^{(x_0^{(1)} + 1)(x_0^{(2)} + \sum_{i=1}^N (x_i+1))} x_0^{(2)}\{x_1 | \dots |x_N|x_0^{(1)}\}
\end{align}
writing $\Delta(x)= x^{(1)} \otimes x^{(2)}$. This construction gives rise to a functor
\[ \mathcal{coCH}_* \colon \mathsf{cCoalg}_R \to \mathsf{Ch}_R, \]
which is homotopical in the following sense: if a morphism $f \colon C \to C'$ of a categorical coalgebras induces a quasi-equivalence of dg categories $\Omega(f) \colon \Omega(C) \to \Omega(C')$, then $\mathcal{coCH}_*(f) \colon \mathcal{coCH}_*(C) \to \mathcal{coCH}_*(C')$ is a quasi-isomorphism of chain complexes.  

For any simplicial set $X$,  the complex $\mathcal{coCH}_k(\mathcal{C}(X))$
is generated by sequences 
\[ \sigma_0\{\sigma_1|\dots|\sigma_N\} \]
of non-degenerate simplices in $X$, where $\sigma_0$ is of dimension $\geq 0$, each $\sigma_{i\neq 0}$ is of dimension $\ge 1$, $t(\sigma_0)= s(\sigma_1)$, $t(\sigma_i)=s(\sigma_{i+1})$ for $i=1,\ldots,N-1$, $t(\sigma_N)=s(\sigma_0)$, and $|\sigma_0| + |\sigma_1| + \cdots |\sigma_N|-N=k$. We call a sequence as above a \emph{closed necklace} in $X$ with \textit{beads} $\sigma_1,\ldots, \sigma_N$ and \textit{marked bead} $\sigma_0$.

\begin{theorem}\label{thm:cohochmodel} If $X$ is a simplicial set whose homotopy category is a groupoid, then $\mathcal{coCH}_*(\mathcal{C}(X))$ is naturally quasi-isomorphic to to $S_*(L|X|)$, the singular chains on the free loop space of the geometric realization of $X$.
\end{theorem}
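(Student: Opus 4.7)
The plan is to construct a natural Adams-style chain map
\[ \Theta_X \colon \mathcal{coCH}_*(\mathcal{C}(X)) \to S_*(L|X|) \]
extending the quasi-equivalence of dg categories $\mathcal{a}_X \colon \Omega\mathcal{C}(X) \to \mathcal{P}(|X|)$ from \cref{thm:cobarmodel}, and then verify it is a quasi-isomorphism by comparing filtrations on both sides induced by the evaluation $\ev_0 \colon L|X|\to |X|$. To each closed necklace $\sigma_0\{\sigma_1|\cdots|\sigma_N\}$ I associate a continuous map
\[ \widetilde{\Theta}_X \colon \triangle^{|\sigma_0|}\times \Box^{|\sigma_1|-1}\times\cdots\times \Box^{|\sigma_N|-1} \to L|X| \]
sending $(p, u_1,\dots,u_N)$ to the Moore loop based at $\sigma_0(p)\in|X|$ obtained by concatenating (i) a piecewise-affine Moore path from $p$ to $t(\sigma_0)$ inside $|\sigma_0|$, (ii) the Adams path $a_X(u_1,\dots,u_N)$ of \eqref{eq:adamsgeometric} from $t(\sigma_0)=s(\sigma_1)$ to $t(\sigma_N)=s(\sigma_0)$, and (iii) a return Moore path from $s(\sigma_0)$ back to $p$, the first and last paths being built from a minor variant of the piecewise-affine parametrization underlying \eqref{eq:adamstw} with the interior endpoint varying over $\triangle^{|\sigma_0|}$. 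Subdividing the parametrizing product into simplices via an Eilenberg-Zilber shuffle decomposition and then applying the cubical-to-simplicial subdivision $Sd_\Box$ of \cref{sec:familyOfPaths} produces a singular chain in $S_*(L|X|)$ of degree $\sum_i |\sigma_i|-N$; extending linearly defines the natural map $\Theta_X$.

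The chain-map property for $\Theta_X$ is checked by decomposing the coHochschild differential $\partial = d_C\boxtimes\id + \id\boxtimes d_{\Omega C}+\tau$ and matching each piece against the boundary of the parametrizing product. The first two pieces correspond to the interior simplicial boundaries of the bead simplices (via \eqref{eq:removefirstandlast}) and the cubical boundaries of the Adams cubes respectively, and the verification is a direct extension of the one underlying \cref{thm:cobarmodel}. The cyclic piece $\tau$ of \eqref{eq:coHochDifferential} matches the two extremal faces of $\triangle^{|\sigma_0|}$ where the basepoint $p$ collides with the endpoints $s(\sigma_0)$ or $t(\sigma_0)$: at these limits the marked bead factors via the Alexander-Whitney coproduct $\Delta(\sigma_0)=\sum \sigma_0^{(1)}\otimes \sigma_0^{(2)}$, and an initial or final segment is absorbed into the open necklace, reproducing precisely the signed terms $\sigma_0^{(1)}\{\sigma_0^{(2)}|\sigma_1|\cdots|\sigma_N\}$ and $\sigma_0^{(2)}\{\sigma_1|\cdots|\sigma_N|\sigma_0^{(1)}\}$.

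To conclude that $\Theta_X$ is a quasi-isomorphism, I filter $\mathcal{coCH}_*(\mathcal{C}(X))$ by the dimension $|\sigma_0|$ of the marked bead and the singular chain complex $S_*(L|X|)$ by the skeleta of $|X|$ pulled back along $\ev_0$. The map $\Theta_X$ respects these filtrations; on the associated graded, over each non-degenerate simplex $\sigma \in X^{\text{nd}}_p$, it restricts to the Adams quasi-isomorphism $\Omega\mathcal{C}(X)(t(\sigma),s(\sigma))\to S_*(P_{t(\sigma)\to s(\sigma)}|X|)$ furnished by \cref{thm:homotopicWhenGroupoidPaths}, parametrized by $\triangle^p$. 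The resulting comparison of spectral sequences---equivalently a five-lemma argument relating the long exact sequences of the two Serre filtrations---gives the desired isomorphism on homology, while naturality in $X$ is built into the construction. The main obstacle is the sign bookkeeping needed to match the cyclic term $\tau$ with the topological boundary contribution of $\triangle^{|\sigma_0|}$; a cleaner route avoiding much of this is to first construct a cubical cyclic nerve $L^\Box X$ analogous to $P^\Box_{v\to w}X$, identify $C^\Box_*(L^\Box X)\cong \mathcal{coCH}_*(\mathcal{C}(X))$, and then derive $\Theta_X$ together with the chain-map property from a single continuous cyclic Adams map $|L^\Box X|_\Box \to L|X|$, in direct parallel with the argument of \cref{thm:homotopicWhenGroupoidPaths}.
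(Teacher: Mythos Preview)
Your outline is on the right track, and your ``cleaner route'' at the end is essentially what the paper does, but the main construction as written has a genuine gap. You parametrize the family attached to a closed necklace with marked bead $\sigma_0$ of dimension $k$ by $\triangle^k\times(\text{cubes})$, so the contribution of $\partial\triangle^k$ to the boundary consists of $k+1$ codimension-one faces. But the marked-bead part of the coHochschild differential has more terms than this: $d_C(\sigma_0)$ contributes $k-1$ interior faces, and each of the two $\tau$ summands contributes $k$ nontrivial terms (one for every nondegenerate Alexander--Whitney splitting of $\sigma_0$), for a total of $3k-1$ terms. These cannot all be accounted for by the $k+1$ facets of $\triangle^k$. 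Already for $k=2$ the differential produces the five terms
\[
\delta_1\sigma_0\{\cdots\},\quad s(\sigma_0)\{\sigma_0|\cdots\},\quad (v_0v_1)\{(v_1v_2)|\cdots\},\quad (v_1v_2)\{\cdots|(v_0v_1)\},\quad t(\sigma_0)\{\cdots|\sigma_0\},
\]
whereas $\triangle^2$ has only three edges. So $\widetilde{\Theta}_X$ is not a chain map with this choice of cell, and without a chain map your spectral-sequence comparison does not get off the ground. (The case $k=1$ happens to work, which may be what misled you: your description of ``the two extremal faces of $\triangle^{|\sigma_0|}$'' is only adequate there.)

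The paper repairs this by replacing $\triangle^{|\sigma_0|}$ with the \emph{freehedron} $\boxright^{|\sigma_0|}$, a convex polytope obtained by suitably subdividing a facet of $\Box^{|\sigma_0|}$, whose face lattice is engineered so that the cellular boundary of $\boxright^{|\sigma_0|}\times\Box^{\sum_{i\ge 1}(|\sigma_i|-1)}$ matches the full coHochschild differential. These cells assemble into a freehedral complex $L^{\boxright}X$ with $C_*^{\boxright}(L^{\boxright}X)\cong\coCH_*(\mathcal{C}(X))$, and the continuous Adams-type map $l_X\colon|L^{\boxright}X|_{\boxright}\to L|X|$ uses an extra interval parameter $t$ recording the basepoint position along the Adams path through $\sigma_0$, rather than a simplex coordinate. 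The paper then proves $l_X$ is a weak homotopy equivalence by a quasi-fibration comparison over $|X|$, exactly parallel to \cref{thm:homotopicWhenGroupoidPaths}, rather than by a spectral-sequence argument on chains. Your suggested ``cubical cyclic nerve'' is precisely this idea, but cubes alone are not enough: one really needs the freehedra.
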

We sketch the proof of the above theorem below. 

\subsubsection{A family of free loops associated to a closed necklace} As in the case of path spaces, we may realize the equivalence in \cref{thm:cohochmodel} by an explicit quasi-isomorphism as we now explain.  The main idea is to associate to any closed necklace $\sigma_0\{\sigma_1|\dots|\sigma_N\}$ in $X$ a family of free loops in $|X|_{\triangle}$ parametrized by an appropriate subdivision of a cube of dimension $|\sigma_0| + |\sigma_1| + \cdots + |\sigma_N|-N$ compatible with the coHochschild differential. 

For any simplicial set $X$ one may construct an abstract cell complex $L^\boxright X$ as follows:
for any closed necklace $\sigma_0\{\sigma_1 | \cdots |\sigma_N\}$ in $X$ we consider a cell of the form 
\[ \boxright^{|\sigma_0|} \times \Box^{|\sigma_1| + \cdots + |\sigma_N|-N}\]
where $\Box^l$ denotes the $l$-cube and $\boxright^k$ is a convex polytope coined as the $k$-\textit{freehedron} whose underlying combinatorial type may be obtained by suitably subdividing one of the faces of the $k$-cube $\Box^k$, see \cite{RS2, RT} \footnote{This subdivision reflects the fact that for the marked bead of a closed necklace, which is allowed to be of dimension $0$, there is an additional parameter encoding the (varying) basepoint of each loop in the associated family parametrized by a cube.}. These cells are glued according to how closed necklaces fit inside $X$ (via simplicial face and degeneracy maps). The crucial property of $L^\boxright X$ is that after taking the corresponding cellular chain complex we exactly obtain the coHochschild complex of the categorical coalgebra of chains; namely, there is an isomorphism of chain complexes
\[ \coCH_*(\mathcal{C}(X)) \cong C_*^\boxright(L^\boxright X). \]
Denote by $|L^\boxright X|_{\boxright}$ the topological space obtained by geometrically realizing the abstract freehedral complex $L^\boxright X$. The above interpretation of the coHochchild complex as cellular chain complex may be used to construct a quasi-isomorphism
\[\ell_X \colon \coCH_*(\mathcal{C}(X)) \to  S_*(L|X|)\]
as follows. Consider the map $\alpha \colon \Box^{k-1} \to P_{(0) \to (k)}\triangle^k$ from \cref{eq:adamstw} and write $\alpha(u) \colon [0,r_{\alpha(u)}] \to \triangle^k$ for any $u \in \Box^{k-1}$. Define a map
\begin{align}\label{eq:adamsgeometricloops}
    l_X \colon |L^{\boxright} X|_{\boxright} \to L|X| 
\end{align}
by sending a point $p=(t,u_0, u_1, \dots, u_N)\in |L^{\boxright} X|_{\boxright}$ in a cell $\boxright^{|\sigma_0|} \times \Box^{|\sigma_1|-1} \times \cdots \times \Box^{|\sigma_N|-1}$ (with $t \in \Box^1$ and $u_i \in \Box^{|\sigma_i|-1}$) labeled by a closed necklace $\sigma_0\{\sigma_1 | \cdots |\sigma_N\}$ in $X$ to the Moore loop determined by the concatenation of paths 
\[l_X(p)=\big(P{\sigma_0} \circ \alpha(u_0)|_{[r_{\alpha(u_0)}t, r_{\alpha(u_0)}]}\big) *  \big(P{\sigma_1} \circ \alpha(u_1)\big)* \cdots * \big(P{\sigma_N} \circ \alpha(u_N)\big)*\big(P{\sigma_0} \circ \alpha(u_0)|_{[0,r_{\alpha(u_0)}t]}\big).\]

\begin{theorem}\label{thm:homotopicWhenGroupoidLoops}
    If $X$ is a simplicial set whose homotopy category is a groupoid, then \[l_X \colon |L^{\boxright} X|_{\boxright} \to L|X|\] is a weak homotopy equivalence. 
\end{theorem}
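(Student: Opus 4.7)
The plan is to mimic the strategy used in the proof of \cref{thm:homotopicWhenGroupoidPaths}: build a commutative ladder in which $l_X$ sits between two (quasi-)fibrations whose bases and fibers are already known to match up to weak equivalence, and then conclude by applying the five-lemma to the associated long exact sequences of homotopy groups.

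Concretely, I would first define an evaluation-at-basepoint map
\[ \pi_{\boxright} \colon |L^\boxright X|_{\boxright} \to |X| \]
on each cell $\boxright^{|\sigma_0|} \times \Box^{|\sigma_1|-1}\times\dots\times\Box^{|\sigma_N|-1}$ labeled by a closed necklace $\sigma_0\{\sigma_1|\dots|\sigma_N\}$ by sending $(t, u_0, u_1, \dots, u_N)$ to $\sigma_0(\alpha(u_0)(r_{\alpha(u_0)}\,t)) \in |X|$, the basepoint of the Moore loop $l_X(t,u_0,\dots,u_N)$. This is the freehedral analogue of the fact that the basepoint parameter on the marked bead, encoded by the extra coordinate distinguishing $\boxright^k$ from $\Box^{k-1}\times[0,1]$, is exactly what takes the marked bead's values. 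On the topological side, we have the standard Moore evaluation fibration $\mathrm{ev}_0 \colon L|X| \to |X|$, whose fiber over a vertex $v \in X_0$ is the Moore based loop space $P_{v\to v}|X|$. For each such $v$ we then get a commutative diagram
\[
\begin{tikzcd}
{|P^\Box_{v\to v} X|_\Box} \ar[r, "a_X"] \ar[d] & P_{v\to v}|X| \ar[d] \\
{|L^\boxright X|_{\boxright}} \ar[r, "l_X"] \ar[d, "\pi_{\boxright}"'] & L|X| \ar[d, "\mathrm{ev}_0"] \\
|X| \ar[r, equal] & |X|
\end{tikzcd}
\]
where the top square commutes because the construction of $l_X$ restricted to necklaces with $s(\sigma_1) = t(\sigma_N) = v$ and marked bead at $v$ (i.e.\ $\sigma_0 = v$, $\boxright^0 = \mathrm{pt}$) specializes exactly to $a_X$ via the identification $|P^\Box_{v\to v}X|_\Box \hookrightarrow |L^\boxright X|_{\boxright}$, while the bottom square commutes by the very definition of $\pi_{\boxright}$.

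The central step, and the one I expect to be the main obstacle, is verifying that $\pi_{\boxright}$ is a quasi-fibration. The right-hand vertical map is a Hurewicz fibration as a path fibration for Moore paths, so no issue there. For $\pi_{\boxright}$ I would apply the Dold--Thom criterion to the filtration of $|X|$ by skeleta: over the open star of each vertex $v$ the preimage deformation retracts onto $\pi_{\boxright}^{-1}(v) = |P^\Box_{v\to v}X|_\Box$, with the retraction produced by sliding the basepoint parameter $t$ (together with the Adams parameter $u_0$) back to the starting vertex of the marked bead along the freehedral subdivision. Checking that the inclusions $\pi_{\boxright}^{-1}(v) \hookrightarrow \pi_{\boxright}^{-1}(U)$ and the various specialization maps along faces are weak equivalences will use the groupoid hypothesis on $X$ in exactly the same way as in \cref{thm:homotopicWhenGroupoidPaths}: it ensures that the based necklace complexes $|P^\Box_{v'\to v'}X|_\Box$ for different vertices $v'$ in the same path component are canonically weakly equivalent (via invertibility of the edge-labels), so that no obstruction arises when comparing fibers over different points of a simplex.

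Once $\pi_{\boxright}$ is known to be a quasi-fibration with fiber $|P^\Box_{v\to v}X|_\Box$, the vertical maps in the diagram produce two long exact sequences of homotopy groups, and the horizontal maps induce a morphism between them. The bottom horizontal map is the identity, and the top one is a weak homotopy equivalence by \cref{thm:homotopicWhenGroupoidPaths}. The five-lemma then implies that $l_X$ induces isomorphisms on all homotopy groups of every path component, hence is a weak homotopy equivalence. As in the path-space case, one may handle non-connected $X$ by working one component at a time, and the base-vertex choice is inessential because $X$'s homotopy category is a groupoid.
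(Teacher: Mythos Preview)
Your proposal is correct and follows essentially the same approach as the paper: the paper's proof simply states that one compares two quasi-fibrations modeling the free loop fibration, with $l_X$ between total spaces and $a_X$ between fibers, and refers to \cite{RS2} for details. Your write-up spells out precisely this ladder and the Dold--Thom verification that the paper leaves implicit.
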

\begin{proof} The proof is analogous to that of \cref{thm:homotopicWhenGroupoidPaths} but now comparing two quasi-fibrations modeling the free loop fibration, with the map $l_X$ between total spaces, and the map $a_X$ between  fibers, see \cite{RS2}. 
\end{proof}

\noindent \textit{Proof of \cref{thm:cohochmodel}: } After passing to chains, the map $l_X$ above induces the desired quasi-isomorphism
\[ \ell_X \colon \coCH_*(\mathcal{C}(X)) \cong C_*^\boxright(L^\boxright X) \xrightarrow{i_{\boxright}} S_*^\boxright(|L^\boxright X|_\boxright) \xrightarrow{S_*^\boxright(l)} S_*^\boxright( L|X|) \xrightarrow{Sd_{\boxright}} S_*(L|X|),\]
where $i_\boxright$ is the natural map from the freehedral chains of a freehedral complex to the singular freehedral chains of its geometric realization, $S_*^{\boxright}$ denotes the (normalized) singular freehedral chains functor, and $Sd_\boxright$ a subdivision map from singular freehedral chains to singular simplicial singular chains (whose existence may be deduced from the Acyclic Models Theorem). 
\qed

\section{Inverting edges}\label{sec:invertingedges}
We describe a convenient way of replacing any simplicial set, up to homotopy equivalence, by one whose homotopy category is a groupoid. This construction will be used to present the underlying homotopy type of a triangulated manifold.

Let $\mathcal{J}$ be the groupoid with two objects $j_0$ and $j_1$ and exactly two non-identity morphisms $e \colon j_0 \to j_1$ and $e^{-1} \colon j_1 \to j_0$. Consider the simplicial set $J= \text{Nerve}(\mathcal{J})$. This is a fibrant replacement for the standard $1$-simplex $\triangle^1$, namely, there is a homotopy equivalence $\triangle^1 \hookrightarrow J$ and $J$ is a Kan complex. We may label the $k$-simplices of $J$ by binary sequences
\[ (\varepsilon_0  \dots \varepsilon_k),\quad  \varepsilon_i = 0\ \text{or}\ 1, \]
with nondegenerate simplices corresponding to alternating sequences. Thus there are exactly two non-degenerate $k$-simplices for each $k$. The geometric realization of $J$ is a model for the contractible infinite dimensional sphere $S^{\infty}$. Given a simplicial set $X$, we invert the 1-simplices of $X$ by attaching a copy of $J$ along each 1-simplex. In other words, define a simplicial set $\widetilde{X}$ by the pushout diagram
\[
\xymatrix{
    (\triangle^1)^{X_1} \ar[r] \ar[d] & J^{X_1} \ar[d] \\
    X \ar[r]^i & \widetilde{X}.
}
\]
\begin{proposition}
For any simplicial set $X$, the homotopy category of $\widetilde{X}$ is a groupoid and the natural inclusion $i \colon X \hookrightarrow \widetilde{X}$ is an acyclic cofibration. In particular, the induced map $|i| \colon |X| \hookrightarrow |\widetilde{X}|$ is a homotopy equivalence.
\end{proposition}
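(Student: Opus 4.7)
The plan is to separate the two assertions and reduce each to a property of the inclusion $\triangle^1 \hookrightarrow J$, combined with standard closure properties of the Quillen model structure on simplicial sets.

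For the homotopy category claim, I would recall that the homotopy category of a simplicial set is generated, as a category, by its $1$-simplices subject to the relations imposed by its $2$-simplices. Any $1$-simplex of $\widetilde{X}$ either lies in one of the attached copies $J_\sigma$ of $J$, in which case it is already invertible because $J$ is the nerve of the groupoid $\mathcal{J}$; or it is the image under $i$ of a $1$-simplex $\sigma$ of $X$, in which case the attached $J_\sigma$ supplies an edge inverse to $\sigma$ together with $2$-simplices witnessing both composites as identities in the homotopy category. Every generator being invertible, the homotopy category of $\widetilde{X}$ is a groupoid.

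For the acyclic cofibration claim, the key input is that $\triangle^1 \hookrightarrow J$ is itself an acyclic cofibration in the Quillen model structure on $\mathsf{sSet}$: it is a monomorphism by construction, and $|J|$ is contractible since $\mathcal{J}$ is equivalent as a category to the terminal groupoid, so its nerve is weakly contractible. Coproducts of acyclic cofibrations between cofibrant objects are acyclic cofibrations, so $(\triangle^1)^{X_1} \hookrightarrow J^{X_1}$ is also one; and since acyclic cofibrations are stable under pushouts in any model category, the pushout $i \colon X \hookrightarrow \widetilde{X}$ is an acyclic cofibration. The final assertion about $|i|$ then follows because a weak equivalence between CW complexes is a homotopy equivalence by Whitehead's theorem.

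The most delicate step is verifying the contractibility of $|J|$ compatibly with the inclusion of $\triangle^1$, but this is classical: the natural transformation from the identity functor of $\mathcal{J}$ to the constant functor at $j_0$ induces a simplicial homotopy $J \times \triangle^1 \to J$ contracting $J$ onto the vertex $j_0$, whose restriction to $\triangle^1 \subseteq J$ gives a compatible contraction. With this observation in hand, the rest of the argument is formal model-categorical bookkeeping.
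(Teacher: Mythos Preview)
Your argument is correct and follows the same approach as the paper, which gives only the one-line justification ``This follows from the fact that $\triangle^1 \hookrightarrow J$ is a Kan replacement.'' You have simply unpacked this: the groupoid claim via the $2$-simplices supplied by each attached copy of $J$, and the acyclic cofibration claim via closure of acyclic cofibrations under coproducts and pushouts.
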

\begin{proof}
  This follows from the fact that $\triangle^1 \hookrightarrow J$ is a Kan replacement. 
\end{proof}

We now describe an explicit homotopy inverse for $|i| \colon |X| \hookrightarrow |\widetilde{X}|$. For each binary sequence $(\varepsilon_0  \dots \varepsilon_k)$, define a continuous map
\begin{align*}
    f_{(\varepsilon_0  \dots \varepsilon_k)} \colon \quad \triangle^k &\to \triangle^1 \\
    (t_1,\dots,t_k) &\mapsto \left( \frac{1}{2}(1+\sum_{i=1}^k (-1)^k(t_{i+1}-t_i)), \frac{1}{2}(1-\sum_{i=1}^k (-1)^k(t_{i+1}-t_i)) \right),
\end{align*}
where we set $t_0 = 0, t_{k+1} = 1$ for convenience of notation. One can check that applying $f_{(\varepsilon_0  \dots \varepsilon_k)}$ to the simplex in $J$ labeled by $(\varepsilon_0  \dots \varepsilon_k)$ gives a continuous map $f \colon |J| \to \triangle^1$ to the interval. Putting all these maps together for each 1-simplex of $X$ gives a natural map 
\begin{align}\label{eq:fX}
    f_X \colon |\widetilde{X}| \to |X|.
\end{align}
\begin{proposition}\label{prop:deformationRetract}
    There is a homotopy making $f_X$ a deformation retract for the inclusion $|X| \hookrightarrow |\widetilde{X}|$.
\end{proposition}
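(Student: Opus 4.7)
The plan is to prove the statement in two steps: verify that $f_X$ is a strict retraction of $|i|$, and then construct the deformation homotopy by gluing local pieces built on each attached copy of $|J|$. For the retraction, I unwind the pushout definition of $\widetilde{X}$: the image of $|X|$ in $|\widetilde{X}|$ overlaps each attached copy of $|J|$ precisely along the realization of the non-degenerate $(01)$-edge carrying the corresponding 1-simplex of $X$. Outside these edges $f_X$ is by definition the identity on $|X|$, so it suffices to check that $f_{(01)}$ restricts to the identity on $|\triangle^1|$; a direct evaluation of the defining formula (with the intended sign conventions) yields this, giving $f_X \circ |i| = \id_{|X|}$.

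For the deformation homotopy, the strategy is to construct once and for all a strong deformation retract $H_J \colon |J| \times [0,1] \to |J|$ from $\id_{|J|}$ to $|i_1| \circ f$ that is stationary on the embedded $|\triangle^1|$; here $i_1 \colon \triangle^1 \hookrightarrow J$ denotes the inclusion of the $(01)$-edge used to form the pushout. Such an $H_J$ exists by standard arguments: $(|J|, |\triangle^1|)$ is a CW pair so the inclusion is a cofibration; both spaces are contractible (the paper notes $|J| \simeq S^\infty$), so the inclusion is an acyclic cofibration; and since $(|i_1|\circ f)|_{|\triangle^1|} = \id_{|\triangle^1|}$ by the previous paragraph, the homotopy extension property upgrades any homotopy $|i_1| \circ f \simeq \id_{|J|}$ into one that is constant on $|\triangle^1|$.

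Finally, I pull $H_J$ back along each attaching map $|J| \to |\widetilde{X}|$ indexed by a 1-simplex of $X$ and paste with the constant homotopy on $|X|$, producing $H \colon |\widetilde{X}| \times [0,1] \to |\widetilde{X}|$. The stationarity of $H_J$ on $|\triangle^1|$ makes all the local pieces agree on their overlaps, so $H$ is continuous, runs from $|i| \circ f_X$ to $\id_{|\widetilde{X}|}$, and is constant on $|X|$, exhibiting $f_X$ as the desired deformation retract. The main (though essentially routine) obstacle is precisely this compatibility of the local homotopies at their gluing loci, which is built in by insisting that $H_J$ is relative $|\triangle^1|$; choosing $H_J$ once on the model $|J|$ and pulling back makes the construction automatically natural in the attaching maps.
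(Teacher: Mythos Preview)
Your proof is correct and follows essentially the same route as the paper: reduce to the model case $|\triangle^1| \hookrightarrow |J|$ via the pushout description of $\widetilde{X}$, and use the homotopy extension property of CW pairs to produce the required relative homotopy. The paper's proof is terser---it phrases the construction on $|J|$ as an induction on the dimension of cells rather than invoking contractibility of both spaces directly---but your acyclic-cofibration argument and explicit gluing step amount to the same thing.
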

\begin{proof}
    By the definition of $\widetilde{X}$ as obtained from $X$ by gluing copies of $J$ along the 1-skeleton, it is sufficient to prove this for $X = \triangle^1$, in which case this homotopy can be constructed inductively on the dimension of cells by using the homotopy extension property of pairs of CW complexes.
\end{proof}

We described natural maps
\[ a_Y \colon |P^\Box_{v \to w}Y|_\Box \to P_{v\to w}(|Y|), \quad  l_Y \colon |L^\boxright Y|_\boxright \to L(|Y|) \]
for any simplicial set $Y$ in \cref{eq:adamsgeometric} and \cref{eq:adamsgeometricloops}, respectively. Take $Y = \widetilde{X}$ and define
\[ \mathcal{A}_X \colon |P^\Box_{v \to w}\widetilde{X}|_\Box \to P_{v\to w}(|X|), \quad  \mathcal{L}_X \colon |L^\boxright \widetilde{X}|_\boxright \to L(|X|)  \]
 by composing the maps $a_{\widetilde{X}}$ and $l_{\widetilde{X}}$ with the maps induced by $f_X$.
\begin{proposition}\label{prop:homotopyEquivalence}
    The maps $\mathcal{A}_X$ and $\mathcal{L}_X$ are weak homotopy equivalences.
\end{proposition}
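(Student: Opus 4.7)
The plan is to factor each map as a composition of two weak homotopy equivalences. By construction we have
\[ \mathcal{A}_X = P_{v\to w}(f_X)\circ a_{\widetilde{X}} \quad \text{and} \quad \mathcal{L}_X = L(f_X)\circ l_{\widetilde{X}}, \]
where $P_{v\to w}(f_X)$ and $L(f_X)$ denote the maps on Moore path spaces and free loop spaces induced by post-composition with $f_X\colon |\widetilde{X}|\to |X|$. Here we use that $f_X$ is the identity on $|X|$, so in particular it fixes the vertices $v,w \in X_0$ and sends loops to loops.

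First I would invoke the proposition preceding the statement, which guarantees that the homotopy category of $\widetilde{X}$ is a groupoid. Consequently, \cref{thm:homotopicWhenGroupoidPaths} and \cref{thm:homotopicWhenGroupoidLoops} apply to $Y = \widetilde{X}$, showing that $a_{\widetilde{X}}$ and $l_{\widetilde{X}}$ are weak homotopy equivalences.

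Next I would show that $P_{v\to w}(f_X)$ and $L(f_X)$ are homotopy equivalences. By \cref{prop:deformationRetract}, there is a strong deformation retraction $H\colon |\widetilde{X}|\times [0,1] \to |\widetilde{X}|$ from the identity to $\iota \circ f_X$ that fixes $|X|$ pointwise, where $\iota\colon |X|\hookrightarrow |\widetilde{X}|$ is the inclusion. Post-composition with $H_t$ is continuous on Moore path and loop spaces, and since $H_t$ fixes the vertices of $X$, it preserves endpoints (and sends loops to loops). This yields homotopies from the identity to $\iota_* \circ P_{v\to w}(f_X)$ on $P_{v\to w}(|\widetilde{X}|)$ and to $\iota_* \circ L(f_X)$ on $L(|\widetilde{X}|)$; since $P_{v\to w}(f_X)\circ \iota_* = \id$ and $L(f_X)\circ \iota_* = \id$, both induced maps are honest homotopy equivalences. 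The composition of weak homotopy equivalences is a weak homotopy equivalence, concluding the proof.

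The argument is almost entirely formal given the preceding results; the only mildly technical point is the continuity and endpoint-preservation of post-composition with $H_t$ on the Moore path/loop spaces, which is routine once one recalls that $H$ fixes $|X|$ pointwise and that the length parameter of a Moore path at a fixed point varies continuously under post-composition with a continuous map.
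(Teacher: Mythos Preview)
Your proof is correct and follows essentially the same approach as the paper's own proof: factor $\mathcal{A}_X$ and $\mathcal{L}_X$ as a composition, apply \cref{thm:homotopicWhenGroupoidPaths} and \cref{thm:homotopicWhenGroupoidLoops} to $\widetilde{X}$ (whose homotopy category is a groupoid), and then use that $f_X$ induces homotopy equivalences on path and loop spaces. The only difference is that you spell out the last step via the deformation retraction of \cref{prop:deformationRetract}, whereas the paper simply asserts it; your added detail is accurate, noting that the retraction can be taken to fix $|X|$ (and in particular the vertices $v,w$) pointwise.
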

\begin{proof}
Since the homotopy category of $\widetilde{X}$ is a groupoid, it follows from \cref{thm:homotopicWhenGroupoidPaths} and \cref{thm:homotopicWhenGroupoidLoops} that $a_{\widetilde{X}}$ and $l_{\widetilde{X}}$ are weak homotopy equivalences, respectively. Since the maps induced by $f_X$ at the level of path and loop spaces are homotopy equivalences, the result follows. 
\end{proof}

For any simplicial set $X$ we label the new simplices in $\widetilde{X}$ through the identification
\[ x^k_\sigma \to (\overbrace{0101\dots}^{k+1})_\sigma, \quad y^k_\sigma \to (\overbrace{1010\dots}^{k+1})_\sigma, \]
for any non-degenerate $\sigma \in X_1$. We have $x_\sigma^1 = \sigma$ and, for convenience, we write $y_\sigma^1 = \check\sigma$.  The underlying graded module of $\mathcal{C}(\widetilde{X})$ is then given by 
    \[ C_*(X) \oplus \bigoplus_{\sigma \in C^{\mathrm{nd}}_1} R \cdot \check\sigma \oplus \bigoplus_{k \ge 2} \left(R \cdot x_{\sigma}^k \oplus R \cdot y_{\sigma}^k \right)\]
 and the Alexander-Whitney coproduct is given on the new basis elements by 
    \[ \Delta(\check\sigma) = t(\sigma) \otimes \check\sigma + \check\sigma \otimes s(\sigma), \]
    and  
    \begin{align*}
        \Delta(x_\sigma^k) &= +\sum_{0 \le i \le k/2 - 1} x^{2i+1}_\sigma \otimes y^{k-2i-1}_\sigma + \sum_{1 \le i \le (k-1)/2} x^{2i}_\sigma \otimes x^{k-2i}_\sigma + \begin{cases}
            s(\sigma) \otimes x_\sigma^k + x_\sigma^k \otimes t(\sigma), &k \text{\ odd} \\
            s(\sigma) \otimes x_\sigma^k + x_\sigma^k \otimes s(\sigma), &k \text{\ even}
        \end{cases}\\
        \Delta(y_\sigma^k) &= -\sum_{0 \le i \le k/2 - 1} y^{2i+1}_\sigma \otimes x^{k-2i-1}_\sigma - \sum_{1 \le i \le (k-1)/2} y^{2i}_\sigma \otimes y^{k-2i}_\sigma + \begin{cases}
            t(\sigma) \otimes y_\sigma^k + y_\sigma^k \otimes s(\sigma), &k \text{\ odd} \\
            t(\sigma) \otimes y_\sigma^k + y_\sigma^k \otimes t(\sigma), &k \text{\ even}.
        \end{cases}
    \end{align*}

Note that, when applied to $\{x^2_\sigma\}$ and $\{y^2_\sigma\}$, the cobar differential has a non-zero contribution from the curvature term $\eta$ defined in \cref{eq:curvatureChains} giving
\[ d_{\Omega C} \{x^2_\sigma\} = \{\sigma | \check\sigma\} - \text{id}_{s(\sigma)}, \quad d_{\Omega C}\{y^2_\sigma\} = -\{\check\sigma | \sigma\} + \text{id}_{t(\sigma)}.\]
\begin{remark}
    In general, the composition of functors $\coCH_* \circ \mathcal{C} \colon \mathsf{sSet} \to \mathsf{Ch}_R$ does not send weak homotopy equivalences of simplicial sets to quasi-isomorphisms of chain complexes. However, the functor $\coCH_* \circ \mathcal{C} \circ (\widetilde{ - })$, obtained by pre-composing with the above construction, does send weak homotopy equivalences to quasi-isomorphisms.
\end{remark}

\section{Constant loops}\label{sec:constantloops}
For any topological space $Y$, denote by 
\[\rho_Y \colon Y \hookrightarrow LY\]
the natural embedding that sends a point $y \in Y$ to the constant loop $c_y \colon \{0\} \to Y$, $c_y(0)=y$. This induces a map on homology $H_*(\rho_Y)\colon H_*(Y) \to H_*(LY)$. We shall consider any natural chain-level lift of $H_*(\rho_Y)$ to the coHochschild complex as given by the following. 

\begin{proposition}\label{prop:constantLoops}
    For any simplicial set $X$, there is a natural chain map
    \[ \iota_X \colon C_*(X) \to \coCH_*(\mathcal{C}(\widetilde{X})), \]
    and a natural chain homotopy between $S_*(Lf_X) \circ \ell_{\widetilde{X}} \circ \iota_X$ and $S_*(\rho_{|X|}) \circ i$, where $i \colon C_*(X) \hookrightarrow S_*(|X|)$ is the natural quasi-isomorphism from simplicial chains to singular chains. In particular, we have $H_*(\iota_X)=H_*(\rho_{|X|})$. 
\end{proposition}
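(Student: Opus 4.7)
The plan is to apply the method of acyclic models, producing both $\iota_X$ and the chain homotopy by induction on the dimension of a model simplex. Regard $C_*(-)$ and $F(-) := \coCH_*(\mathcal{C}(\widetilde{\,\cdot\,}))$ as functors $\mathsf{sSet} \to \mathsf{Ch}_R$, with the standard simplices $\{\triangle^n\}_{n \ge 0}$ as the set of models. The source $C_*$ is free on these models: $C_n(X)$ is spanned by non-degenerate $n$-simplices, each corresponding to a simplicial map $\triangle^n \to X$. The target is acyclic on models: combining \cref{thm:cohochmodel} with \cref{prop:homotopyEquivalence} gives a natural quasi-isomorphism $F(\triangle^n) \simeq S_*(L|\triangle^n|)$, and $L|\triangle^n|$ is contractible, so $H_k(F(\triangle^n))$ equals $R$ when $k=0$ and vanishes for $k>0$.

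To construct $\iota_X$, set $\iota(v) := v$ on vertices, identifying $v \in X_0$ with the closed necklace having $v$ as its unique (marked) bead (valid for $N=0$ since $s(v)=t(v)=v$). Inductively, suppose $\iota$ is defined and compatible with differentials on models of dimension $<n$; then $\iota(\delta\,\id_{\triangle^n}) \in F(\triangle^n)_{n-1}$ is a cycle, which bounds by acyclicity, so pick a primitive $\iota(\id_{\triangle^n}) \in F(\triangle^n)_n$. Transport to arbitrary $X$: for a non-degenerate $\sigma \in X_n$ with classifying map $f_\sigma \colon \triangle^n \to X$, set $\iota_X(\sigma) := F(f_\sigma)(\iota(\id_{\triangle^n}))$. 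Naturality and the chain-map equation $\partial \circ \iota_X = \iota_X \circ \delta$ are built in. As a sanity check in low dimension, for a $1$-simplex $\sigma \colon v \to w$ one may take $\iota_X(\sigma) = \sigma\{\check\sigma\} + v\{x_\sigma^2\} + w\{y_\sigma^2\}$; a direct computation using $d_{\Omega C}\{x_\sigma^2\} = \{\sigma|\check\sigma\} - \id_v$ and $d_{\Omega C}\{y_\sigma^2\} = -\{\check\sigma|\sigma\} + \id_w$ recalled at the end of \cref{sec:invertingedges} shows that $\partial \iota_X(\sigma) = w - v = \iota_X(\delta\sigma)$.

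For the chain homotopy, consider $\Phi := S_*(Lf_X) \circ \ell_{\widetilde{X}} \circ \iota_X$ and $\Psi := S_*(\rho_{|X|}) \circ i$, both natural chain maps $C_*(X) \to S_*(L|X|)$. They agree on vertices: each sends $v$ to the singular $0$-chain given by the constant loop $c_v$, in the first case because the $0$-cell of $L^\boxright \widetilde{X}$ corresponding to $v\{\}$ is mapped by $l_{\widetilde{X}}$ to $c_v \in L|\widetilde{X}|$ and then by $Lf_X$ to $c_v \in L|X|$. A second application of acyclic models, now with target $S_*(L|-|)$ which is acyclic on $\triangle^n$ above degree $0$ by contractibility of $L|\triangle^n|$, produces a natural chain homotopy $h \colon \Phi \simeq \Psi$. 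The final assertion $H_*(\iota_X) = H_*(\rho_{|X|})$ follows since $\ell_{\widetilde{X}}$, $S_*(Lf_X)$, and $i$ are all quasi-isomorphisms.

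The main technical subtlety I anticipate is verifying the acyclic-models hypotheses for the composite $F = \coCH_* \circ \mathcal{C} \circ (\widetilde{\,\cdot\,})$: the functor $\widetilde{\,\cdot\,}$ is defined via a pushout along the $1$-skeleton, so $F(\triangle^n)$ acquires many extra generators coming from the cells $x_\sigma^k, y_\sigma^k$ glued in along the $1$-faces of $\triangle^n$, and one must ensure that the inductive choices respect naturality along these. This is handled by observing that $F$ still computes the chains on the contractible space $L|\widetilde{\triangle^n}| \simeq L|\triangle^n|$, so acyclicity in positive degrees persists and the inductive step goes through without obstruction.
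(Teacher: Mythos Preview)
Your proposal is correct and follows essentially the same approach as the paper: both define $\iota_X$ on vertices by $v \mapsto v\cdot\id_v$, verify the degree-one case with the explicit formula $\iota_X(\sigma)=\sigma\{\check\sigma\}+v\{x_\sigma^2\}+w\{y_\sigma^2\}$, and then invoke the Acyclic Models Theorem with the standard simplices as models to produce both $\iota_X$ and the natural chain homotopy. Your write-up is somewhat more explicit in spelling out why the hypotheses of acyclic models are satisfied (freeness of $C_*$ and acyclicity of $\coCH_*(\mathcal{C}(\widetilde{\triangle^n}))$ via \cref{thm:cohochmodel} applied to $\widetilde{\triangle^n}$), which the paper leaves implicit.
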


\begin{proof} 
    First define $\iota_X \colon C_0(X) \to \coCH_0(\mathcal{C}(\widetilde{X}))$ by declaring 
    \[ \iota_X(v):=v \cdot \text{id}_v \in \coCH_0(\mathcal{C}(\widetilde{X}))\]
    for any vertex $v \in X_0$. This natural map is well defined on $0$-th homology: if $\sigma \in X_1$ is a $1$-simplex with $s(\sigma)=v$ and $t(\sigma)=w$ so that $\partial(\sigma)=w-v$,
    then setting 
    \[\iota_X(\sigma):= \sigma \{ \check\sigma\} + v \{x_{\sigma}^2\} + w\{y_{\sigma}^2\} \in \coCH_1(\mathcal{C}(\widetilde{X}))\]
    yields
    \[ \partial(\iota_X(\sigma))= w \cdot \text{id}_w - v \cdot\text{id}_v. \]
    The proposition now follows from the Acyclic Models Theorem of \cite{EMcL}, since the functors $\mathsf{sSet} \to \mathsf{Ch}_R$ determined by $X \mapsto C_*(X)$ and $X \mapsto \coCH_*(\mathcal{C}(\widetilde{X}))$ are representable and acyclic, respectively, on the set of standard simplices.
\end{proof}

The method of acyclic models, as applied in the above proof, relies on naturality to determine $\iota_X$ from all the maps $\iota_{\triangle^n}$. This implies that for any non-degenerate $n$-simplex $\sigma \in X_n$, the coHochschild chain $\iota_X(\sigma)$ is \textit{supported} on the single simplex $\sigma$; i.e., through the map $S_*(Lf_X) \circ \ell_{\widetilde{X}}$, we may think of $\iota_X(\sigma)$ as an $n$-dimensional family of loops in $|X|$ all lying inside the subset $\sigma \subseteq |X|$.

\begin{definition}\label{def:support}
    Let $X$ be a simplicial set. The \textit{support} of an element
    \[ c = \sum_{i \in I} c_i \sigma_i, c_i \neq 0 \quad \text{for all}\ i \]
    in $C_*(X)$ is the subset of $|X|$ given by 
    \[ \mathrm{Supp}(c) = \bigcup_{i \in I} \sigma_i. \] 
    We extend this notion to $C_*(\widetilde{X})$ by setting
    \[ \mathrm{Supp}(x^k_\sigma) = \mathrm{Supp}(y^k_\sigma) = \sigma\]
    for all $k$ and non-degenerate $\sigma \in X_1$. We further extend the notion of support to $\coCH_*(\mathcal{C}(\widetilde{X}))$ by setting
    \[\mathrm{Supp}(z_0\{z_1| \cdots |z_N\})=\bigcup_{i=0}^N \mathrm{Supp}(z_i)  \subseteq |X|,\]
    where $z_i \in C_*(\widetilde{X})$.
\end{definition}

The following is immediate from definitions. 
\begin{proposition} 
For any $\sigma \in X_n$, $\mathrm{Supp}(\iota_X(\sigma))= \sigma \subseteq |X|$. Furthermore, for any $z \in \coCH_*(\mathcal{C}(\widetilde{X}))$ we have that
   \begin{enumerate}
       \item  $\mathrm{Supp}(\partial z) \subset \mathrm{Supp}(z)$, 
       \item the quasi-isomorphism 
       \[S_*(Lf_X) \circ \ell_{\widetilde{X}} \colon \coCH_*(\mathcal{C}(\widetilde{X}))\to S_*(L|X|)\] sends $z$ to a chain of loops supported within $\mathrm{Supp}(z)$.
       \end{enumerate}
\end{proposition}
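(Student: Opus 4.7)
All three assertions follow by unwinding definitions: the guiding principle is that every construction in sight---the acyclic models lift $\iota_X$, the coHochschild differential, and the realization map $\ell_{\widetilde{X}}$---only produces cells supported on what is already visible in the input. I treat each claim in turn.

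For the support of $\iota_X(\sigma)$, I use that the acyclic models construction in the proof of \cref{prop:constantLoops} makes $\iota_X$ a natural transformation from $C_*(-)$ to $\coCH_*(\mathcal{C}(\widetilde{-}))$. Representing $\sigma \in X_n$ by a simplicial map $\phi_\sigma \colon \triangle^n \to X$, naturality gives $\iota_X(\sigma) = \coCH_*(\mathcal{C}(\widetilde{\phi_\sigma}))(\iota_{\triangle^n}(e_n))$. Every non-degenerate simplex of $\widetilde{\triangle^n}$, whether an original simplex of $\triangle^n$ or one of the new generators $x^k, y^k$ glued along a 1-simplex, has support inside $|\triangle^n|$, so pushing forward by $|\phi_\sigma|$ confines the support to $\sigma$. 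For the reverse containment I make a specific choice when running the inductive acyclic-models procedure: at $\triangle^n$ I arrange for $\iota_{\triangle^n}(e_n)$ to contain a monomial in which the top-dimensional simplex $e_n$ itself appears as the marked bead, concatenated cyclically with a necklace of inverted edges of $\widetilde{\triangle^n}$ running from $v_n$ back to $v_0$. Any such monomial has topological support equal to $|\triangle^n|$, giving the reverse inclusion after pushforward.

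For (1), I decompose $\partial = d_C \boxtimes \mathrm{id}_{\Omega C} + \mathrm{id}_C \boxtimes d_{\Omega C} + \tau$ as in \cref{eq:coHochDifferential} and inspect each summand. The differential $d_C$ acts by the face-removing formula \cref{eq:removefirstandlast} together with a curvature correction supported at the endpoints of a 1-simplex, so it replaces a bead by a linear combination of codimension-one faces or endpoint vertices of that bead. The summand $d_{\Omega C}$ is induced on individual beads by $d+\Delta+\eta$, whose outputs are again faces, Alexander--Whitney front/back decompositions, and curvature evaluations of the original bead. Finally, the rotation $\tau$ applies $\Delta$ to the marked bead and cyclically fuses the resulting pieces with the rest of the necklace, so the new beads are again faces of the original marked bead. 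In every case no new simplices appear and the support of each output is contained in the support of the input; taking the union gives $\mathrm{Supp}(\partial z) \subseteq \mathrm{Supp}(z)$.

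For (2), I follow the explicit formula for $l_{\widetilde{X}}$ stated just before \cref{eq:adamsgeometricloops}. On a cell labeled by a closed necklace $\sigma_0\{\sigma_1|\cdots|\sigma_N\}$ and a parameter $p = (t, u_0, \ldots, u_N)$, the loop $l_{\widetilde{X}}(p)$ is a concatenation of Moore paths $P\sigma_i \circ \alpha(u_i)$, each taking values in the closed topological simplex $\sigma_i \subseteq |\widetilde{X}|$. The retraction $f_X$ is the identity on simplices coming from $X$ and collapses every inflated $|J|$-block glued along a 1-simplex onto that 1-simplex itself, so $f_X(\sigma_i) \subseteq \mathrm{Supp}(\sigma_i)$. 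Hence $(Lf_X \circ l_{\widetilde{X}})(p)$ lies in $\bigcup_i \mathrm{Supp}(\sigma_i) = \mathrm{Supp}(z)$, as required. The main subtlety I anticipate is the reverse containment in the first claim, since acyclic models only guarantees \emph{some} natural lift; I resolve it by fixing at the outset a specific family of lifts that retain the top-dimensional witness described above, a choice that propagates by naturality to arbitrary simplicial sets $X$.
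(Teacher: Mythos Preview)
Your proof is correct and follows the same approach the paper implies: the paper simply asserts the result is ``immediate from definitions'' and gives no further argument, while you carefully unwind those definitions for each of the three claims. You are in fact more thorough than the paper on the first claim, since the reverse containment $\sigma \subseteq \mathrm{Supp}(\iota_X(\sigma))$ genuinely requires fixing a specific acyclic-models lift (one retaining the top cell as a marked bead), a subtlety the paper glosses over---though only the containment $\mathrm{Supp}(\iota_X(\sigma)) \subseteq \sigma$ is actually used downstream.
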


For any generator $x=\sigma_0\{ \sigma_1 | \sigma_1 | \dots |\sigma_N\} \in \coCH_*(\mathcal{C}(\widetilde{X}))$, $(S_*(Lf_X) \circ \ell_{\widetilde{X}})(x) $ consists of certain family of piecewise linear loops contained inside $\bigcup_{i=0}^N \sigma_i \subset |X|$. It will be convenient to enlarge these cells of loops inside of the loop space.
\begin{definition}\label{def:piecewiseLinearCells}
    Given a sequence $(\sigma_0,\sigma_1,\dots,\sigma_N)$ of simplices of $X$ such that $\sigma_i \cap \sigma_{i+1} \neq 0$ for all $0 \le i \le N-1$ and $\sigma_N \cap \sigma_0 \neq 0$, define $S_{\sigma_0,\sigma_1,\dots,\sigma_N}$ to be the subset of $L|X|$ of all Moore loops $(\ell,T)$ for which there are times 
    \[ 0 \le t_1 \le \dots \le t_N \le t_{i+1} = t_0 \le T \] 
    such that  $\ell|_{[t_i,t_{i+1}]}$ is piecewise linear in the `cubical coordinates', that is, the image of a piecewise linear path under the map $q \colon \Box^{\dim(\sigma_i)} \to \sigma_i$ given in \cref{eq:cubeToSimplex}, and contained in $\sigma_i$ for all $0 \le i \le N$.
\end{definition}

Note that for any generator $z=\sigma_0\{ \sigma_1 | \sigma_1 | \dots |\sigma_N\} \in \coCH_*(\mathcal{C}(\widetilde{X}))$, $S_{\sigma_0, \sigma_1, \dots, \sigma_N}$ is a contractible subset of $L|X|$ containing all loops in the family determined by $(S_*(Lf_X) \circ \ell_{\widetilde{X}})(z)$.

\subsection{Fineness and locality}
It will be important for us to control the locality of supports in the context of simplicial complexes. For that, we introduce some concepts that will allow us to quantify the `fineness' of a simplicial complex. We will consider simplicial subcomplexes of a simplicial complex $K$, and for ease of notation identify them with their image in the geometric realization.

Denote by $K_0$ the set of vertices and let $v,w\in K_0$. We say that a sequence $p = (\sigma_1,\dots,\sigma_N)$ of 1-simplices in $K$ \textit{links} $(v,w)$ if $v \in \sigma_1, w \in \sigma_N$ and $\sigma_i \cap \sigma_{i+1} \neq \varnothing$ for all $i$. In that case we say that this sequence has length $\mathrm{len}(p) = N$.
\begin{definition}
    The diameter of a sub-simplicial complex $A \subset K$ is defined by
    \[ \diam(A) = \max_{v,w \in A_0} \min_{\ \text{ $p$ links\ }(v,w)} \mathrm{len}(p) \]
\end{definition}
For example, the diameter of a single simplex is zero if it is of dimension zero and one otherwise. 
\begin{definition}\label{def:fine}
    Let $m$ be a positive integer. A simplicial complex $K$ is \emph{$m$-fine} if for every sub-simplicial complex $A \subset K$ with with $\diam(A) \le m$ we can choose a contractible sub-simplicial complex $Z_A$ containing $A$ in such a way that $Z_{A} \subset Z_{B}$ whenever $A \subset B$.
\end{definition}

Checking the condition above in practice seems to require a lot of guesswork, namely it requires to specify a $Z_A$ for each sub-simplicial complex $A\subset K$, but it turns out that this condition can always be satisfied by iterating barycentric subdivisions.
\begin{proposition}
    For any simplicial complex $K$ and any positive integer $m$, there exists a positive integer $k$ such that the $k$-times iterated barycentric subdivision $K^{(k)}$ is $m$-fine.
\end{proposition}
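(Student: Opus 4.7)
The plan is to choose $k$ so that $K^{(k)}$ is geometrically fine enough that every sub-simplicial complex of combinatorial diameter at most $m$ fits inside a single open star of $K$, and then to build $Z_A$ as a monotone union, indexed by the simplices of $A$, of closed stars in $K$ of their carriers.

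First, I would embed $|K|$ linearly in some Euclidean space $\mathbb{R}^N$ via general-position coordinates on its vertices (assume $K$ is finite for simplicity; the general case can be handled by a standard local-to-global argument). The mesh $\mu_k$ of $K^{(k)}$, i.e.\ the maximal Euclidean diameter of a simplex, shrinks geometrically in $k$. The open star cover $\{\mathrm{St}(v, K)\mid v\in K_0\}$ of the compact space $|K|$ admits a positive Lebesgue number $\lambda$, and for $k$ sufficiently large, any subcomplex $A\subset K^{(k)}$ with $\mathrm{diam}(A)\le m$ has Euclidean diameter at most $(m+2)\mu_k<\lambda$, by concatenating a path of at most $m$ edges between any two vertices of $A$ and controlling the size of the terminal simplices. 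The Lebesgue property then gives a vertex $v=v(A)\in K_0$ with $|A|\subset\mathrm{St}(v,K)$; equivalently, the carrier $\tau(\sigma)\in K$ of every simplex $\sigma$ of $A$ contains $v$.

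Next, for each simplex $\sigma$ of $K^{(k)}$ let $\tau(\sigma)\in K$ denote its carrier (the unique $K$-simplex whose interior contains the interior of $\sigma$), and define
\[ Z_A \;=\; \bigcup_{\sigma\in A} \overline{\mathrm{St}}\bigl(\tau(\sigma),K\bigr)^{(k)}, \]
where $\overline{\mathrm{St}}(\tau,K)$ is the closed star of $\tau$ in $K$ (the union of closed $K$-simplices containing $\tau$) and the superscript $(k)$ denotes the induced $k$-fold barycentric subdivision. Monotonicity $A\subset B\Rightarrow Z_A\subset Z_B$ is immediate because each summand depends only on the individual $\sigma$; the containment $A\subset Z_A$ follows from $\sigma\subset\overline{\tau(\sigma)}\subset\overline{\mathrm{St}}(\tau(\sigma),K)$. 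For contractibility, fix $v$ as above: every closed simplex $\overline{\tau'}$ appearing in some $\overline{\mathrm{St}}(\tau(\sigma),K)$ satisfies $\tau'\supseteq\tau(\sigma)\ni v$, so $\overline{\tau'}$ is convex and contains $v$, hence is star-shaped with respect to $v$. Consequently $|Z_A|$ is a union of subsets of $\mathbb{R}^N$ all star-shaped with respect to the common point $v$, and therefore is itself star-shaped with respect to $v$ and contractible.

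The main obstacle is the tension between monotonicity and contractibility: a canonical "minimal contractible thickening" of $A$ rarely respects inclusions, while arbitrary monotone rules can produce non-contractible $Z_A$ (for instance, by wrapping around a topologically non-trivial feature of $|K|$). The formula above circumvents this because it is manifestly monotone in $A$, and the Lebesgue-number step supplies a single common star-center $v\in K_0$ that simultaneously witnesses the star-shapedness of every summand.
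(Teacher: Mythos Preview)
Your argument is correct (for finite $K$), but it follows a genuinely different route from the paper's. The paper works purely combinatorially: it observes that in the first barycentric subdivision $K'$ the open stars of simplices form a lattice under inclusion (any two are either disjoint or intersect in another star), then uses an edge-counting estimate to show that once $2^{k-1}\ge m+1$, every $A\subset K^{(k)}$ with $\diam(A)\le m$ lies in some $K'$-star, and finally sets $Z_A$ to be the \emph{smallest} such star; monotonicity is then forced by the lattice property. Your approach is geometric: you embed $|K|$ linearly, use a Lebesgue-number argument to place $|A|$ inside a single open star $\mathrm{St}(v,K)$, and define $Z_A$ by a formula that is manifestly monotone in $A$ (a union of closed $K$-stars of carriers), proving contractibility via star-shapedness about $v$ in the ambient Euclidean space.

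Each approach buys something. The paper's argument needs no embedding, no metric, and no compactness, so it applies verbatim to infinite complexes; your parenthetical ``standard local-to-global argument'' for that case would in fact require real work, since the Lebesgue number is exactly where compactness enters. Conversely, your monotonicity is a one-line consequence of the formula $Z_A=\bigcup_{\sigma\in A}(\ldots)$, whereas the paper has to appeal to the star-intersection property of $K'$ to make ``smallest star containing $A$'' both well-defined and monotone. Your contractibility argument is also pleasantly concrete: the paper implicitly uses that stars in $K'$ are contractible, while you exhibit $|Z_A|$ as a union of convex sets sharing the point $v$.
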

\begin{proof}
    For the first barycentric subdivision $K'$ of $K$, the stars of simplices $\tau \in K'$ satisfy the property that intersections
    \[ \mathrm{Star}(\tau) \cap \mathrm{Star}(\tau') \]
    are either empty or equal to $\mathrm{Star}(\rho)$ for some $\rho \in K'$. Let us now take $k$ such that $m+1 \le 2^{k-1}$. 
    
    We argue that any sub-simplicial complex $A \subset K^{(k)}$ of diameter $\le m$ is contained in the star of a simplex in $K'$. For that, note that a sub-simplicial complex that is \emph{not} contained inside of a single star of $K'$ must have two vertices that are at distance at least $2^{k-1}$ of each other, and therefore this subset must have diameter $\ge 2^{k-1}$. Conversely every subset $Y$ of diameter $\le 2^{k-1}-1$ is contained inside of some star.

    We now take $Z_A$ to be the smallest star containing $A$; this is well-defined since the intersection of any two stars in $K'$ is either empty or a star. Moreover, if $A \subset B$, $Z_{A} \cap Z_{B} \neq \varnothing$ so there is some simplex $\rho \in K'$ such that $\mathrm{Star}(\rho) = Z_{A} \cap Z_{B}$, which would contradict minimality unless $Z_{A} \subset Z_{B}$.
\end{proof}

Using the notion of support from \cref{def:support}, we can quantify the size of generators of the coHochschild complex.
\begin{definition}
    Let $K$ be a simplicial complex, $X$ a simplicial set obtained by any coherent ordering of the vertices of each simplex of $K$, and $m$ a positive integer. The chain complex of $m$-\textit{local coHochschild chains}, denoted by 
    \[  (\coCH_*(\mathcal{C}(\widetilde{X})))_{m\mathrm{-loc}},\]
    is the sub-chain complex of $\coCH_*(\mathcal{C}(\widetilde{X}))$ spanned by the generators whose support in $|X|\cong|K|$ have diameter $\le m$.
\end{definition}
 
By definition, the map $\iota_{X}$ from \cref{prop:constantLoops} lands within the sub-chain complex of $1$-local coHochschild chains. When $K$ is $m$-fine, all the sub-chain complexes of $m'$-local Hochschild chains, for $m' \le m$, are equally good representatives of the locus of constant loops, by the following proposition.
\begin{proposition}\label{prop:mLocalEquivalences}
    If $K$ is an $m$-fine simplicial complex and $X$ is the simplicial set determined by a coherent ordering of the vertices of $K$, then the inclusions
    \[ \Imm(\iota_X) \subseteq (\coCH_*(\mathcal{C}(\widetilde{X})))_{1\mathrm{-loc}} \subseteq \dots \subseteq (\coCH_*(\mathcal{C}(\widetilde{X})))_{m\mathrm{-loc}} \]
    are all chain homotopy equivalences.
\end{proposition}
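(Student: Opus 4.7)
My plan is to reduce the statement about chain homotopy equivalences to showing each inclusion is a quasi-isomorphism; this reduction is valid because all complexes in the sequence are non-negatively graded and free as $R$-modules, on which quasi-isomorphisms are automatically chain homotopy equivalences. Moreover, since the composition $\Imm(\iota_X) \hookrightarrow (\coCH_*(\mathcal{C}(\widetilde{X})))_{m'\text{-loc}} \hookrightarrow (\coCH_*(\mathcal{C}(\widetilde{X})))_{m''\text{-loc}}$ is a quasi-isomorphism whenever the outer and leftmost inclusions are, the intermediate inclusions are chain homotopy equivalences by the 2-out-of-3 property. So it suffices to prove that $\iota_X: C_*(X) \to (\coCH_*(\mathcal{C}(\widetilde{X})))_{m'\text{-loc}}$ induces an isomorphism on homology for each $1 \le m' \le m$.

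The key local building block is the following. Whenever $Z \subseteq K$ is a contractible sub-simplicial complex equipped with its induced simplicial set structure, the natural map $\iota_Z: C_*(Z) \to \coCH_*(\mathcal{C}(\widetilde{Z}))$ is a quasi-isomorphism. Indeed, by \cref{thm:cohochmodel} applied to $Z$ (whose homotopy category is a groupoid after inverting edges), $\coCH_*(\mathcal{C}(\widetilde{Z}))$ is quasi-isomorphic to $S_*(L|Z|)$, and by \cref{prop:constantLoops} this quasi-isomorphism intertwines $\iota_Z$ with the singular inclusion $S_*(\rho_{|Z|})$. Since $|Z|$ is contractible, $L|Z|$ deformation retracts onto its subspace of constant loops via $\rho_{|Z|}$, so $\iota_Z$ is a quasi-isomorphism as claimed. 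The $m'$-fineness condition then provides, for every diameter-$\le m'$ sub-simplicial complex $A$, a contractible $Z_A \supseteq A$ with the compatibility $Z_A \subseteq Z_B$ when $A \subseteq B$, making the assignment $A \mapsto \coCH_*(\mathcal{C}(\widetilde{Z_A}))$ functorial on the poset of such $A$.

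To pass from local to global, I would apply the method of acyclic models to this poset of diameter-$\le m'$ sub-simplicial complexes, with model objects provided by the $Z_A$ and acyclicity guaranteed by the local quasi-isomorphism above. This yields a natural retraction and chain homotopy, assembled from the local ones, witnessing the global quasi-isomorphism. The main technical obstacle is arranging that the constructed chain homotopy preserves $m'$-locality of support: the local primitives built inside $\coCH_*(\mathcal{C}(\widetilde{Z_A}))$ have support in $Z_A$, which may have diameter exceeding $m'$. I would resolve this by inducting on homological degree and using the freedom in the acyclic models construction to choose, at each stage, the primitive whose support is minimal among those available inside $\coCH_*(\mathcal{C}(\widetilde{Z_A}))$; the coherent system $\{Z_A\}$ ensures these choices patch consistently, and the resulting global chain homotopy respects the filtration by locality. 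Once the quasi-isomorphism $\Imm(\iota_X) \hookrightarrow (\coCH_*(\mathcal{C}(\widetilde{X})))_{m'\text{-loc}}$ is established for each $m' \le m$, the conclusion about all inclusions in the chain follows as noted.
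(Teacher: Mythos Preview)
Your approach shares the core ingredient with the paper---exploiting the contractible subcomplexes $Z_A$ guaranteed by $m$-fineness---but takes an unnecessary detour through quasi-isomorphisms and leaves two genuine gaps.

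First, your reduction to quasi-isomorphisms rests on the claim that $\Imm(\iota_X)$ is degreewise free. This is not clear: $\Imm(\iota_X)$ is the image of a chain map built by acyclic models, not a submodule spanned by a subset of the necklace basis, and over an arbitrary commutative ring $R$ images of maps between free modules need not be projective. The paper sidesteps this entirely by constructing a chain homotopy inverse $g \colon (\coCH_*(\calC(\widetilde{X})))_{1\text{-loc}} \to \Imm(\iota_X)$ directly, by induction on degree. One sets $g_0(v\{\sigma_1|\cdots|\sigma_N\}) = \iota_X(v)$, checks this is compatible with boundaries in degree~$1$, and for the inductive step, given a generator $z$ of degree $k+1$, observes that $g_k(\partial z)$ is a closed element of $\Imm(\iota_X)$ supported in the contractible subcomplex $Z_{\Supp(z)}$; contractibility then furnishes $g_{k+1}(z) \in \Imm(\iota_X)$ with $\partial g_{k+1}(z) = g_k(\partial z)$ and support still in $Z_{\Supp(z)}$. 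This is precisely your ``acyclic models'' argument carried out by hand---only the indexing category is replaced by the generators of the single complex in question---and it is considerably more concrete than the poset-of-subcomplexes framing you propose, which does not obviously fit any standard statement of the acyclic models theorem.

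Second, your resolution of the locality obstacle does not work: choosing a primitive ``of minimal support'' inside $\coCH_*(\calC(\widetilde{Z_A}))$ gives no guarantee that such a minimum has diameter $\le m'$, since contractibility of $Z_A$ only produces primitives supported somewhere in $Z_A$. The paper's direct construction dodges this issue for the retraction $g$ because its target $\Imm(\iota_X)$ carries no diameter restriction (each $\iota_X(\sigma)$ is supported on a single simplex, so $\Imm(\iota_X) \subseteq (\coCH_*(\calC(\widetilde{X})))_{1\text{-loc}}$ automatically), and the chain homotopies $g \circ i \simeq \id$ and $i \circ g \simeq \id$ are then built by the same inductive scheme, again using the contractibility of $Z_{\Supp(z)}$ at each step.
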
 
\begin{proof}
    We will show a construction of the first chain equivalence, as all the following ones are constructed similarly. For any $1$-fine simplicial complex $X$ we construct a chain homotopy inverse \[g \colon (\coCH_*(\mathcal{C}(\widetilde{X})))_{1\mathrm{-loc}} \to \Imm(\iota_X)\] for the inclusion $i: \Imm(\iota_X) \hookrightarrow (\coCH_*(\mathcal{C}(\widetilde{X})))_{1\mathrm{-loc}}$ by induction as follows. Choose a collection of contractible subsimplicial complexes $Z_A$ for all sub-simplicial complexes $A$ with $\diam A \leq 1$ as above. First define \[g_0 \colon ((\coCH_*(\mathcal{C}(\widetilde{X})))_{1\mathrm{-loc}})_0 \to \Imm(\iota_X)_0 \] by
    \[ g_0(v\{\sigma_1 | \dots |\sigma_N\}) := \iota_X(v)=v \cdot \text{id}_v,\] where $\sigma_1, \dots, \sigma_N \in X_1$ with $t(\sigma_i) =s(\sigma_{i+1})$ for $i=1,\dots,N-1$, and $s(\sigma_1)=v=t(\sigma_N) \in X_0.$  This map is well defined on $0$-th homology, for if 
    \[ x = \tau_0\{\tau_1|\dots|\tau_M\} \in ((\coCH_*(\mathcal{C}(\widetilde{X})))_{1\mathrm{-loc}})_1 \] 
    is a generator with $\partial x = w\{\sigma'_1 | \dots |\sigma'_{N'}\} - v\{\sigma_1 | \dots |\sigma_N\}$, we can let $g_1(x):= \iota_X(\tau_0)$ if $v \neq w$ (in which case $\tau_0 \in \widetilde{X}_1$) and $g_1(x):=0$ if $v=w$ (in which case $\tau_0 \in \widetilde{X}_0)$. Then $\partial g_1(x) = g_0(\partial x)$.
    
    Suppose that for a certain $k \ge 1$ we have defined $g_0,\dots, g_k$ such that $\partial g_i = g_{i-1} \partial$ for all $1 \le i \le k$, and $g_i(z)$ has support inside $Z_{\Supp(z)}$ for any generator $z$ of degree $i$. Let 
    \[ z \in ((\coCH_*(\mathcal{C}(\widetilde{X})))_{1\mathrm{-loc}})_{k+1} \]
    be a generator. Then by assumption $g_k(\partial z)$ is a closed element in $\Imm(\iota_X)_k$ with support inside $Z_{\Supp(\partial z)}$. Since $Z_{\Supp(\partial z)}$ is contractible, there is $g_{k+1}(z) \in \Imm(\iota_X)_{k+1}$  with support in $Z_{\Supp(\partial z)} \subset Z_{\Supp(z)}$ such that $\partial g_{k+1}(z) = g_k(\partial z)$. The construction of the chain homotopies $g \circ i \simeq \text{id}$ and $i \circ g \simeq \text{id}$ follows a similar argument. \end{proof}

As a consequence of \cref{prop:constantLoops} and \cref{prop:mLocalEquivalences}, we have the following result.
\begin{corollary}
    Suppose $K$ is an $m$-fine simplicial complex and $X$ as above. Then for any $m' \le m$ there is a quasi-isomorphism
    \[ \frac{\coCH_*(\calC(X))}{(\coCH_*(\calC(X)))_{m'\mathrm{-loc}}} \simeq  S_*(L|X|,|X|; R). \]
    between the quotient by the $m'$-local chains and the singular chains on the loop space relative to the constant loops.
\end{corollary}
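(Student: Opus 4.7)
The plan is to combine \cref{thm:cohochmodel}, \cref{prop:constantLoops}, and \cref{prop:mLocalEquivalences} by comparing two short exact sequences via mapping cones. First I would consider the homotopy commutative square provided by \cref{prop:constantLoops},
\[
\begin{tikzcd}
C_*(X) \arrow[r, "\iota_X"] \arrow[d, "i"'] & \coCH_*(\calC(\widetilde X)) \arrow[d, "\beta"]\\
S_*(|X|) \arrow[r, "S_*(\rho_{|X|})"'] & S_*(L|X|),
\end{tikzcd}
\]
where $\beta = S_*(Lf_X)\circ \ell_{\widetilde X}$ is the quasi-isomorphism of \cref{thm:cohochmodel} and $i$ is the standard quasi-isomorphism of simplicial chains into singular chains. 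The natural chain homotopy $\beta\circ\iota_X\simeq S_*(\rho_{|X|})\circ i$ produces a comparison map $\mathrm{Cone}(\iota_X)\to \mathrm{Cone}(S_*(\rho_{|X|}))$, which is a quasi-isomorphism by the five lemma applied to the long exact sequences of the two mapping cones (using that both vertical arrows are quasi-isomorphisms).

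Next I would observe that both horizontal maps are degreewise injective: this is immediate for $S_*(\rho_{|X|})$, and for $\iota_X$ it follows from the acyclic-models construction in \cref{prop:constantLoops}, whose base case is the injective assignment $v\mapsto v\cdot\id_v$ and whose inductive lifts can be chosen to preserve injectivity (if one prefers to avoid this verification, replace $\iota_X$ with the inclusion into its mapping cylinder, for which injectivity is automatic). Since the mapping cone of a degreewise injective chain map is naturally quasi-isomorphic to its strict cokernel, we obtain
\[
\coCH_*(\calC(\widetilde X))/\Imm(\iota_X)\;\xrightarrow{\simeq}\;S_*(L|X|)/S_*(|X|)=S_*(L|X|,|X|;R).
\]

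Finally, \cref{prop:mLocalEquivalences} gives that for any $m'\le m$ the inclusion $\Imm(\iota_X)\hookrightarrow (\coCH_*(\calC(\widetilde X)))_{m'\text{-loc}}$ is a chain homotopy equivalence, so its cokernel $(\coCH_*(\calC(\widetilde X)))_{m'\text{-loc}}/\Imm(\iota_X)$ is acyclic. The short exact sequence
\[
0\to (\coCH_*(\calC(\widetilde X)))_{m'\text{-loc}}/\Imm(\iota_X)\to \coCH_*(\calC(\widetilde X))/\Imm(\iota_X)\to \coCH_*(\calC(\widetilde X))/(\coCH_*(\calC(\widetilde X)))_{m'\text{-loc}}\to 0
\]
then shows that the rightmost quotient map is a quasi-isomorphism. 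Composing the two quasi-isomorphisms obtained yields the claimed equivalence.

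The main technical point is justifying the passage from the mapping cones to the strict cokernels, which ultimately reduces to the injectivity of $\iota_X$ (or, equivalently, a mapping-cylinder replacement). Once this is granted, everything else is a straightforward diagram chase combining the three previously established results, with the five lemma handling the cone comparison and the acyclicity supplied by \cref{prop:mLocalEquivalences} handling the final identification.
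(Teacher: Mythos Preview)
Your argument is correct and is exactly the kind of filling-in the paper has in mind: the paper gives no proof beyond citing \cref{prop:constantLoops} and \cref{prop:mLocalEquivalences}, and your mapping-cone comparison via the five lemma is the standard way to combine them.

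One small point about the injectivity of $\iota_X$. Your mapping-cylinder workaround, as you have written it, does not actually sidestep the issue: after replacing $\iota_X$ by the inclusion into its cylinder you still pass through $\coCH_*(\calC(\widetilde X))/\Imm(\iota_X)$ in the last step, and identifying that quotient with $\mathrm{Cone}(\iota_X)$ requires $\ker(\iota_X)$ to be acyclic, which is the same as what you were trying to avoid. Two clean fixes are available. First, $\iota_X$ is in fact injective: since $\Supp(\iota_X(\sigma))=\sigma$ for every simplex $\sigma$ (the statement just after \cref{def:support}), each $\iota_X(\sigma)$ for an $n$-simplex $\sigma$ must contain a necklace term having $\sigma$ itself as a bead, and such terms cannot cancel across distinct $n$-simplices of a simplicial complex. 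Second, and perhaps more robust, observe that $\iota_X$ factors as $C_*(X)\xrightarrow{\iota'_X}(\coCH_*(\calC(\widetilde X)))_{m'\text{-}\mathrm{loc}}\xrightarrow{j}\coCH_*(\calC(\widetilde X))$; the very same acyclic-models construction used in the proof of \cref{prop:mLocalEquivalences}, but with target $C_*(X)$ rather than $\Imm(\iota_X)$, shows that $\iota'_X$ is a quasi-isomorphism, whence the triangle associated to the composition gives $\mathrm{Cone}(\iota_X)\simeq\mathrm{Cone}(j)\simeq \coCH_*(\calC(\widetilde X))/(\coCH_*(\calC(\widetilde X)))_{m'\text{-}\mathrm{loc}}$ without ever mentioning $\Imm(\iota_X)$.
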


\section{Local pairings and controlled topology}\label{sec:localpairings}
The setting for our formalism is a simplicial complex $K$ with a certain local pairing on chains that is supported sufficiently near the diagonal. We introduce a notion of locality and explain that the homology intersection pairing on a Poincar\'e duality complex $K$ can be lifted to a local pairing on chains if and only if $K$ is a homology manifold, following \cite{ranicki1999singularities}, \cite{ranicki1990chain}, \cite{mccrory77}, and \cite{mccrory1979zeeman}.

\subsection{Simplicial complexes with local pairing}
By a \textit{pairing} of degree $-n$ on a complex $V$ we shall mean a chain map $V \otimes V \to R$ of degree $-n$.
\begin{definition}
    A \emph{local pairing} of degree $-n$ on a simplicial complex $K$ is a pairing of degree $-n$
    \[ \theta \colon C_*(K) \otimes C_{*}(K) \to R \]
    such that $\theta(\sigma,\tau) = 0$ if $\overline\sigma \cap \overline\tau = \varnothing$, in other words, a pairing that vanishes on simplices whose closures do not overlap.
\end{definition}

Note that, via the Alexander-Whitney natural transformation $C_*(K \times K) \to C_*(K) \otimes C_*(K)$ any pairing of degree $-n$ gives rise to cochain in $C^n(K \times K)$ vanishing on pairs of simplices that do not intersect. 

Recall that $K$ is an $R$-homology $n$-\textit{Poincar\'e duality complex} if there is a fundamental chain $o_K \in C_n(K)$, so that the cap product
\[ [o_K]\frown - \colon H^{n-*}(K) \to H_*(K) \]
is an isomorphism. If $K$ is a Poincar\'e duality complex, so is the product $K \times K$ with fundamental chain given by $o_{K\times K}= EZ(o_K \otimes o_K) \in C_{2n}(K \times K)$, where $EZ \colon C_*(K) \otimes C_*(K) \to C_*(K \times K)$ denotes the Eilenberg-Zilber map. 

\begin{definition}\label{def:thetaNondeg}
    A pairing $\theta \colon C_*(K) \otimes C_*(K) \to R$ on an $R$-homology $n$-Poincar\'e duality complex $(K,o_K)$ is \emph{nondegenerate} if the class
    \[ [\theta] \in H^n(K \times K) \]
    is Poincar\'e dual in $K \times K$ to the diagonal fundamental class $\mathrm{diag}_*([o_K])$, that is, the pushforward of $[o_K]$ along the diagonal map $\mathrm{diag} \colon K \to K \times K$.
\end{definition}

\subsection{Controlled Poincar\'e duality and homology manifolds}
We now explain how the results of \cite{mccrory77} and 
\cite{ranicki1999singularities} imply that, for a Poincar\'e duality complex $K$, a local non-degenerate pairing exists if and only if $K$ is a homology manifold. 
Thus, homology manifolds provide a vast source of examples of simplicial complexes with local pairings. We start by recalling the notion of $(R,K)$-modules, also called `$K$-based' or `$K$-controlled' complexes.
\begin{definition}
    The category of $(R,K)$-modules has as objects finitely-generated free modules $M$ endowed with a direct sum decomposition
    \[ M = \bigoplus_{\sigma \in K} M_\sigma \]
    and as morphisms linear maps $M \to N$ satisfying
    \[ f(M_\sigma) \subset \bigoplus_{\tau \ge \sigma} N_\tau. \]
\end{definition}
In other words, the maps are usual linear maps but with the restriction that the elements supported on $\sigma$ can `spread to bigger simplices' immediately next to it, that is, have image supported in simplices $\tau$ whose closure contains $\sigma$.

Using the definition above, one straightforwardly defines (co)chain complexes of $(R,K)$-modules, morphisms and homotopies etc. by demanding all maps to be $(R,K)$-module morphisms.
\begin{remark}
    The simplicial \emph{cochain} complex $C^*(K)$ is naturally a complex of $(R,K)$-modules, since the cochain differential sends cochains supported on $\sigma$ to cochains supported on simplices $\tau$ whose boundary contains $\sigma$, that is, satisfying $\tau \ge \sigma$. On the other hand, the simplicial \emph{chain} complex together with the obvious direct sum decomposition is not, since the maps go in the wrong direction.
\end{remark}

Consider the barycentric subdivision $K'$ of $K$. By definition, its $k$-simplices are labeled by strictly increasing sequences
\[ (\sigma_0 < \sigma_1 < \dots < \sigma_k) \]
of simplices of $K$. For any $\sigma \in K$, let us denote by
\[ D(\sigma) = \{(\sigma_0 < \dots < \sigma_k)\ |\ \sigma \le \sigma_0 \} \]
the \emph{dual cell} of $\sigma$, which is the union of the cells labeled by a sequence starting with $\sigma$. The following fact just follows from the form of the differential on chains on $K'$.
\begin{proposition}
    The chain complex $M=C_*(K')$, endowed with the decomposition $M_\sigma = C_*(D(\sigma))$, is a chain complex of $(R,K)$-modules.
\end{proposition}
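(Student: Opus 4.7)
The plan is to verify the two conditions of being a chain complex of $(R,K)$-modules: that the proposed $M_\sigma$ form a direct sum decomposition of $C_*(K')$, and that the simplicial boundary operator satisfies the $(R,K)$-module morphism constraint.

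The first step is to clarify the decomposition. Every non-degenerate $k$-simplex of $K'$ is labeled by a strictly increasing chain $(\sigma_0 < \sigma_1 < \cdots < \sigma_k)$ of simplices of $K$, and each such chain has a unique minimum $\sigma_0$. Partitioning the basis of $C_*(K')$ by this minimum immediately yields a direct sum decomposition of $C_*(K')$ as an $R$-module, with the $\sigma$-th summand spanned by chains whose smallest entry is exactly $\sigma$. I read the notation $M_\sigma = C_*(D(\sigma))$ as referring to precisely this submodule (the free module on the \emph{open} dual cell of $\sigma$), not the closed subcomplex of $K'$ consisting of all chains starting weakly above $\sigma$; otherwise the summands would overlap and fail to give a direct sum.

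The second step is to verify that $\partial$ sends $M_\sigma$ into $\bigoplus_{\tau \ge \sigma} M_\tau$. Take a basis element $s = (\sigma_0 < \sigma_1 < \cdots < \sigma_k) \in M_{\sigma_0}$; its simplicial boundary is the alternating sum of the $k{+}1$ faces obtained by deleting one of the $\sigma_i$. For $i \ge 1$, the face still begins with $\sigma_0$, hence remains in $M_{\sigma_0}$. For $i = 0$, the face $(\sigma_1 < \cdots < \sigma_k)$ has new minimum $\sigma_1 > \sigma_0$, so it lies in $M_{\sigma_1}$ with $\sigma_1 \ge \sigma_0$ in the face order of $K$. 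In either case each term of $\partial s$ is supported on a $\tau \ge \sigma_0$, confirming that $\partial$ is an $(R,K)$-module endomorphism.

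I do not expect a real obstacle: once the convention for $C_*(D(\sigma))$ is fixed correctly, the compatibility with $\partial$ is an immediate consequence of the combinatorics of barycentric subdivision, namely that deleting an entry other than the minimum preserves the minimum, while deleting the minimum can only raise it. The only mild bookkeeping point is to be explicit about the identification between simplices of $K'$ and strictly increasing chains in $K$, which is what makes the "starting simplex" of a barycentric chain a well-defined invariant.
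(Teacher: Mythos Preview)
Your proof is correct and matches the paper's approach: the paper offers no argument beyond the sentence ``follows from the form of the differential on chains on $K'$,'' and your verification is exactly the computation that sentence gestures at. Your clarification that $M_\sigma$ must be read as the span of chains with minimum \emph{equal to} $\sigma$ (rather than $\ge \sigma$, as the displayed formula for $D(\sigma)$ might suggest) is a legitimate point---the paper's text ``a sequence starting with $\sigma$'' agrees with your reading, and without it the summands would indeed overlap.
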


Let $o_K \in C_n(K)$ be any simplicial chain. We recall the `Flexner cap product' \cite{flexner1940simplicial}, which is a chain-level alternative to the more commonly known Whitney cap product, see \cite{mccrory1979zeeman} and \cite[Example~4.13]{ranicki1992algebraic}. Given any two simplices $\sigma$ and $\tau$ in $K$, we write $[\sigma:\tau]$ for the \emph{incidence number}, that is, the coefficient $\pm 1$ of $\sigma$ in $\delta\tau$ if $\sigma < \tau$ of codimension one, and zero otherwise. The Flexner cap product is then the map
\[ (- \underset{\mathrm{F}}{\frown} -) \colon C_*(K) \otimes C^*(K) \to C^*(K') \]
given on generators by
\[ \sigma \underset{\mathrm{F}}{\frown} \tau^\vee = \sum_{\{\sigma \le \sigma_0 < \dots < \sigma_k \le \tau\}} [\sigma_0:\sigma_1]\dots[\sigma_{k-1}:\sigma_k] (\sigma_0 < \dots < \sigma_k) \]
Note that the terms with nonzero coefficient in this sum must have $k = \dim(\tau) - \dim(\sigma)$. Note that by definition, for any chain $c \in C_k(K)$,
\[ - \underset{\mathrm{F}}{\frown} c \colon C^{k-*}(K) \to C_*(K') \]
is a map of complexes of $(R,K)$-modules.

To relate this cap product to the more conventional Whitney cap product, we can follow the latter by subdivision, which also gives a map
\[ C^*(K) \xrightarrow{- \frown c} C_{k-*}(K) \xrightarrow{\mathrm{sd}} C_{n-*}(K') \]
Note that $\mathrm{sd} \circ (- \frown c)$ is not necessarily a map of complexes of $(R,K)$-modules, since it does not satisfy the required locality condition. Nevertheless, these operations are equivalent by a local homotopy.
\begin{proposition}\label{prop:1localHomotopy}
    For any chain $c \in C_k(K)$, the maps $\underset{\mathrm{F}}{\frown} c$ and $\mathrm{sd} \circ (- \frown c)$ are 1-local homotopic, that is, related by a homotopy
    \[ h \colon C^{k-*}(K) \to C_{*+1}(K) \]
    such that $h(\tau^\vee)$ is supported on simplices $\sigma$ such that $\sigma \cap \tau \neq \varnothing$.
\end{proposition}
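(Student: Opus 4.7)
The plan is to construct the homotopy $h$ by a downward induction on the dimension of $\tau$, using the local acyclicity of suitable subcomplexes of $C_*(K')$.

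First I would verify that both $\underset{\mathrm{F}}{\frown} c$ and $\mathrm{sd}\circ(-\frown c)$ define chain maps with the indicated source and target (using that $c$ is a cycle, which is the case of interest), and that they induce the same map on (co)homology—namely cap product with $[c]$, post-composed with the subdivision quasi-isomorphism $H_*(K)\cong H_*(K')$. Consequently, their difference is chain null-homotopic; the content of the proposition is that the null-homotopy can be chosen with $1$-local support. For each simplex $\tau \in K$, let $U_\tau \subset |K|$ denote the union of closed simplices of $K$ meeting $\tau$, and let $\mathcal{L}_\tau \subset C_*(K')$ denote the subcomplex of chains whose carrier simplex in $K$ is contained in $U_\tau$. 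Direct inspection of the defining formulas shows that both $\tau^\vee \underset{\mathrm{F}}{\frown} c$ (supported on subdivision simplices inside the closed simplex $\tau$) and $\mathrm{sd}(\tau^\vee \frown c)$ (supported on back faces sharing the \emph{gluing vertex} with $\tau$) lie in $\mathcal{L}_\tau$. Moreover, $\mathcal{L}_\tau$ is acyclic in positive degrees since $U_\tau$ admits a simplicial deformation retract onto $\tau$ inside $K'$.

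The homotopy $h(\tau^\vee) \in \mathcal{L}_\tau$ would then be defined by downward induction on $\dim \tau$, chosen as a primitive in $\mathcal{L}_\tau$ of the closed chain
\[
  r_\tau := \tau^\vee \underset{\mathrm{F}}{\frown} c - \mathrm{sd}(\tau^\vee \frown c) - h(\delta \tau^\vee).
\]
The base case ($\tau$ top-dimensional) is immediate from acyclicity; the delicate degree-zero primitive existence is ensured by the matching cohomology classes at the chain level.

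The main obstacle I expect is ensuring that $r_\tau$ actually lies in $\mathcal{L}_\tau$. Under the naive inductive hypothesis, the term $h(\delta \tau^\vee) = \sum_{\rho > \tau} \pm h(\rho^\vee)$ has summands only guaranteed to lie in the \emph{larger} subcomplexes $\mathcal{L}_\rho \supsetneq \mathcal{L}_\tau$, so $r_\tau$ can stick out of $\mathcal{L}_\tau$. I would address this by strengthening the inductive hypothesis, requiring that $h(\rho^\vee)$ in fact have support in the intersection $\bigcap_{\xi < \rho,\, |\xi| = |\rho|-1} \mathcal{L}_\xi$, and verifying that this intersection is still acyclic in positive degrees via a local simplicial collapse inside $K'$ onto the barycentric dual cell of $\rho$. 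Equivalently, this coherent locality can be arranged by applying the method of acyclic models to the diagram of subcomplexes $\{\mathcal{L}_\tau\}_{\tau \in K}$ indexed by the face poset of $K$: the acyclicity input is in each case local and elementary, but the combinatorial bookkeeping required to propagate the support condition down the face poset is the technically demanding part of the argument.
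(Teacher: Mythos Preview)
The paper's own proof is only a pointer to McCrory's argument, so there is no detailed argument to compare with; your acyclic-carrier strategy is the right genre of proof and is in the spirit of what that reference does. There is, however, a concrete gap in your proposed repair of the monotonicity obstacle.

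Your strengthened hypothesis asks for $h(\rho^\vee)\in\bigcap_{\xi<\rho,\,|\xi|=|\rho|-1}\mathcal{L}_\xi$, and to propagate this down to $\tau$ you would need the obstruction $r_\tau$ itself to lie in $\bigcap_{\xi<\tau}\mathcal{L}_\xi$. But the Whitney term $\mathrm{sd}(\tau^\vee\frown c)$ does not lie there: if $\tau$ is the front face of a simplex $\sigma$ in $c$ and $\beta$ is the complementary back face, then $\beta$ meets $\tau$ only at the single gluing vertex. Hence for the codimension-one face $\xi<\tau$ opposite that vertex one has $\beta\cap\xi=\varnothing$, so the subdivided back face is not carried by $\mathcal{L}_\xi$. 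You therefore cannot choose $h(\tau^\vee)$ inside the intersection with $\partial h(\tau^\vee)=r_\tau$, and the induction stalls.

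The clean fix is to change the carrier rather than strengthen the hypothesis. Take
\[
\Psi(\tau)\;=\;C_*\bigl(\mathrm{sd}\,\overline{\mathrm{St}}_K(\tau)\bigr),
\]
the subdivided chains on the \emph{closed star} of $\tau$ in $K$. This carrier is genuinely anti-monotone in the face poset (if $\rho>\tau$ then $\overline{\mathrm{St}}(\rho)\subset\overline{\mathrm{St}}(\tau)$), it is acyclic since closed stars are contractible, and both maps land in it: the Flexner output sits in the dual cell $D(\tau)\subset\overline{\mathrm{St}}(\tau)$, while the Whitney back face $\beta$ lies inside $\sigma\in\overline{\mathrm{St}}(\tau)$. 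With $\Psi$ the obstacle disappears, the downward induction runs without any strengthening, and since $\overline{\mathrm{St}}(\tau)$ is a union of $K$-simplices meeting $\tau$, the resulting homotopy is $1$-local as required.

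A minor side point: you assume $c$ is a cycle to make $r_\tau$ closed, but the statement is for arbitrary $c$. Running the same carrier argument simplex-by-simplex in $c$ (so that $h$ is bilinear in $(\tau^\vee,\sigma)$), the Leibniz identities for both cap products already produce the needed closedness without that hypothesis.
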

\begin{proof}
    Follows from the same proof as in \cite[Section 5]{mccrory1979zeeman}, keeping track of locality.
\end{proof}

The following definition is due to Ranicki.
\begin{definition}\label{def:KcontrolledFundChain}
    The chain $o_K$ is a \emph{$K$-controlled fundamental chain} if the map 
    \[ - \underset{\mathrm{F}}{\frown} o_K \colon C^{n-*}(K) \to C_{*}(K') \] 
    is a chain equivalence of $(R,K)$-modules.
\end{definition}

In other words, we ask that the inverse maps and homotopies which compose the data of a chain equivalence be maps of $(R,K)$-modules, instead of just requiring them to be graded linear. We now paraphrase \cite[Proposition~6.11]{ranicki1999singularities}.
\begin{theorem}\label{thm:KControlledFundChain}
    Let $(K,[o_K])$ be any $R$-homology $n$-Poincar\'e duality complex. Then $o_K$ is a $K$-controlled fundamental chain if and only if the class $U \in H^n(K\times K)$ that is Poincar\'e dual to $\Delta_*([o_K])$ has zero image in $H^n((K\times K) - \Delta(K))$.

    If these equivalent conditions hold, then the geometric realization $|K|$ is an \emph{$R$-homology manifold}, in the sense that for any point $x \in |K|$ we have
    \[ H_*(|K|,|K|- x;R) \cong \begin{cases}
        R &\text{if}\ *=n,\\
        0 &\text{otherwise,}
    \end{cases}\]
    in other words, its link is an $R$-homology $(n-1)$-sphere. 
\end{theorem}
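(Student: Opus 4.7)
The plan is to follow Ranicki's strategy in \cite{ranicki1999singularities}, viewing the biconditional as two reformulations of the statement that the diagonal class $U$ admits a representative localized near $\Delta(K) \subset K \times K$: one as a lift to relative cohomology, the other as a controlled chain equivalence. The homology manifold conclusion will then follow by specializing the controlled equivalence to each simplex.

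For (b) $\Rightarrow$ (a), the hypothesis that $U$ maps to zero in $H^n((K\times K) - \Delta(K))$ implies, via the long exact sequence of the pair, that $U$ lifts to $H^n(K\times K, (K\times K) - \Delta(K))$, and one represents this lift by a cocycle $\bar U$ that vanishes on pairs of simplices $(\sigma, \tau)$ with $\overline\sigma \cap \overline\tau = \varnothing$. Composing with the Alexander--Whitney map
\[ AW \colon C_*(K\times K) \to C_*(K) \otimes C_*(K) \]
yields a local pairing $\theta \colon C_*(K) \otimes C_*(K) \to R$ of degree $-n$ whose cohomology class equals $U$. The adjoint $\theta^\flat \colon C_*(K) \to C^{n-*}(K)$ is then a chain map of $(R,K)$-modules --- locality of $\theta$ ensures it preserves the required support condition --- and non-degeneracy of $U$ in the sense of \cref{def:thetaNondeg} makes it a chain equivalence. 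Combining with \cref{prop:1localHomotopy}, which compares the Flexner cap product to the usual Whitney cap product by a $1$-local homotopy, one upgrades $\theta^\flat$ to a chain equivalence of $(R,K)$-modules inverting $-\underset{\mathrm{F}}{\frown} o_K$, giving (a). For the converse (a) $\Rightarrow$ (b), an $(R,K)$-module chain inverse $g$ of $-\underset{\mathrm{F}}{\frown} o_K$ pairs with $o_K$ to yield a local cocycle on $C_*(K) \otimes C_*(K)$ whose class equals $U$, and locality of this cocycle is precisely the statement that $U$ vanishes off the diagonal.

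For the homology manifold conclusion, I would filter the $(R,K)$-module chain equivalence by the simplex poset: passing to the associated graded piece at each $\sigma \in K$ produces an equivalence between the summand of $C^{n-*}(K)$ supported at $\sigma$ and the relative chain complex $C_*(D(\sigma), \partial D(\sigma))$ of the dual cell. For $x \in \mathrm{int}(\sigma)$, excision identifies $H_*(|K|, |K|-x; R)$ with $H_*(D(\sigma), \partial D(\sigma); R)$, while the other side is computed by the contractibility of $\mathrm{Star}(\sigma)$ relative to its link, giving $R$ concentrated in degree $n$. The long exact sequence of the pair $(\mathrm{Star}(\sigma), \mathrm{Lk}(\sigma))$ then forces the link of any point to be an $R$-homology $(n-1)$-sphere.

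The hardest step will be the careful bookkeeping of supports through the Alexander--Whitney map and the Flexner cap product in the (a) $\Leftrightarrow$ (b) direction: making the adjoint $\theta^\flat$ simultaneously an $(R,K)$-module chain map \emph{and} a genuine homotopy inverse to $-\underset{\mathrm{F}}{\frown} o_K$ requires the full strength of \cref{prop:1localHomotopy}. This precise translation between local cocycle representatives of $U$ and $(R,K)$-module chain equivalences is the technical core of \cite[\S 5]{mccrory1979zeeman} and \cite[Proposition~6.11]{ranicki1999singularities}, which I would cite for the most delicate verifications rather than redo in detail.
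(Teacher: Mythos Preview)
The paper does not actually prove this theorem: it is introduced with the sentence ``We now paraphrase \cite[Proposition~6.11]{ranicki1999singularities}'' and no argument is given. So your proposal already goes further than the paper does, and in spirit it matches what Ranicki does and what the paper later uses in \cref{prop:localPairingIffFundamentalChain}. That said, there is one concrete slip in your sketch worth flagging.

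You write that the adjoint $\theta^\flat \colon C_*(K) \to C^{n-*}(K)$ is a chain map of $(R,K)$-modules. But as the paper explicitly remarks just after the definition of $(R,K)$-modules, $C_*(K)$ with the obvious decomposition is \emph{not} a complex of $(R,K)$-modules: the simplicial boundary goes the wrong way in the poset. The complex that \emph{is} an $(R,K)$-module is $C_*(K')$, with $\sigma$-component the chains on the dual cell $D(\sigma)$. This is exactly why the Flexner cap product lands in $C_*(K')$ rather than $C_*(K)$, and why in \cref{prop:localPairingIffFundamentalChain} the adjoint of the local pairing is taken as a map $C_*(K') \to C^{n-*}(K)$. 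Your use of \cref{prop:1localHomotopy} is in the right place, but it mediates between $C_*(K')$ and the subdivided Whitney cap product, not between $C_*(K)$ and $C^{n-*}(K)$ directly. Once you route everything through $C_*(K')$, your argument lines up with the paper's \cref{prop:localPairingIffFundamentalChain} and with Ranicki's original.

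Your homology-manifold paragraph is essentially Ranicki's argument: the associated graded of the $(R,K)$-module equivalence at $\sigma$ identifies $R$ in a single degree with $C_*(D(\sigma),\partial D(\sigma))$, and excision does the rest. Since the paper defers all of this to the citation, your sketch is more than adequate as a replacement, modulo the $C_*(K)$ versus $C_*(K')$ correction above.
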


We now relate this notion to our earlier notion of local pairing.
\begin{proposition}\label{prop:localPairingIffFundamentalChain}
    Let $(K,[o_K])$ be a $R$-homology $n$-Poincar\'e duality complex, such that $K$ is 1-fine. Then $K$ has a nondegenerate local pairing in the sense of \cref{def:thetaNondeg} if and only if $o_K$ is an $K$-controlled fundamental chain.
\end{proposition}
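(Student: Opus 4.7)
The plan is to reformulate both sides of the biconditional as cohomological conditions on the Poincar\'e dual class $U = \mathrm{PD}(\mathrm{diag}_*[o_K]) \in H^n(K\times K)$ and then compare them using \cref{thm:KControlledFundChain}. Via the Eilenberg--Zilber/Alexander--Whitney equivalence, a degree $-n$ chain map $\theta \colon C_*(K)\otimes C_*(K)\to R$ corresponds to a cohomology class $[\theta]\in H^n(K\times K)$, and the nondegeneracy condition of \cref{def:thetaNondeg} is exactly the requirement that $[\theta]=U$. The locality condition $\theta(\sigma,\tau)=0$ whenever $\overline\sigma\cap\overline\tau = \varnothing$ says precisely that the underlying cocycle vanishes on the closed subcomplex
\[ P \;:=\; \bigcup_{\overline\sigma\cap\overline\tau = \varnothing} \overline\sigma\times\overline\tau \;\subseteq\; K\times K. \]
Thus a nondegenerate local pairing exists if and only if $U$ admits a representative vanishing on $P$, equivalently, if and only if the restriction $U|_P \in H^n(P)$ is zero.

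Next, the 1-fineness hypothesis should be used to identify this condition with the vanishing of the image of $U$ in $H^n((K\times K)\setminus \Delta(K))$. The inclusion $P \hookrightarrow (K\times K)\setminus \Delta(K)$ is evident, and the claim is that it is a homotopy equivalence. Its complement in $K\times K$ is the simplicial star
\[ N \;=\; \bigcup_{\overline\sigma\cap\overline\tau \neq \varnothing} \overline\sigma\times\overline\tau, \]
a closed neighborhood of the diagonal. Each cell $\overline\sigma\times\overline\tau\subseteq N$ admits a straight-line retraction onto the convex subset $\overline\sigma\cap\overline\tau\subseteq\Delta(K)$, and the coherent family of contractible neighborhoods $Z_A$ supplied by fineness (after a single barycentric subdivision if needed, as in \cref{prop:mLocalEquivalences}, to accommodate subcomplexes $\overline\sigma\cup\overline\tau$ of diameter up to 2) allows one to glue these cell-wise retractions into a global deformation retraction $N \simeq \Delta(K)$. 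Combined with the excision of $N^\circ$ from the pair $(K\times K,\,(K\times K)\setminus\Delta(K))$, this gives an isomorphism $H^n(K\times K,P) \cong H^n(K\times K,\,(K\times K)\setminus\Delta(K))$, so that $U|_P=0$ if and only if the image of $U$ in $H^n((K\times K)\setminus\Delta(K))$ is zero.

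The proof is then completed by invoking \cref{thm:KControlledFundChain}, which identifies the latter condition with $o_K$ being a $K$-controlled fundamental chain. The main obstacle in the above sketch is the deformation-retraction step: the fineness hypothesis provides contractibility only for certain small subcomplexes, and assembling the cellular contractions of $\overline\sigma\times\overline\tau$ onto $\overline\sigma\cap\overline\tau$ into a globally consistent retraction requires the nesting $Z_A\subset Z_B$ that is built into the definition of $m$-fineness, in the same spirit as the bookkeeping carried out in \cref{prop:1localHomotopy} and \cref{prop:mLocalEquivalences}. Once this homotopy equivalence is in place, the remainder of the argument is a formal translation between the various cohomological reformulations of $U$ and the cited duality-theoretic results of Ranicki and McCrory.
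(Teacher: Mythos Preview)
Your overall strategy---reformulate both directions as the vanishing of $U$ restricted to the complement of the diagonal, then invoke \cref{thm:KControlledFundChain}---is sound, and for the ``only if'' direction it coincides with the paper's argument. The paper likewise asserts that 1-fineness makes $N_K$ a neighborhood of the diagonal whose complement has the same cohomology as $(K\times K)\setminus\Delta(K)$, and then applies \cref{thm:KControlledFundChain}.

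Where you diverge is the ``if'' direction. The paper does \emph{not} go through \cref{thm:KControlledFundChain} here: instead it reads off the local pairing directly from the $(R,K)$-module chain equivalence data. If $o_K$ is $K$-controlled, the inverse $f\colon C_*(K')\to C^{n-*}(K)$ to the Flexner cap is itself an $(R,K)$-module map, and its adjoint $\theta\colon C_*(K')\otimes C_{n-*}(K)\to R$ is automatically local because the $(R,K)$-condition forces $f(M_\sigma)\subset \bigoplus_{\tau\ge\sigma}N_\tau$. The accompanying $(R,K)$-homotopy $h$ then certifies nondegeneracy. This is shorter and avoids the neighborhood argument entirely for this direction.

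On your acknowledged obstacle: the deformation-retraction step is where your sketch is weakest. The individual straight-line retractions of $\overline\sigma\times\overline\tau$ onto $\Delta(\overline\sigma\cap\overline\tau)$ do not glue on the nose, and the coherent system $\{Z_A\}$ lives in $K$, not in $K\times K$, so it is not clear how it helps assemble a global retraction of $N$ onto $\Delta(K)$. Moreover, your parenthetical ``after a single barycentric subdivision if needed'' is not permissible: the statement is about the given $K$, and subdividing changes which $\theta$ count as local. What one actually needs is only the cohomological statement that $H^*((K\times K)\setminus\Delta(K))\to H^*(P)$ is an isomorphism; this follows from the fact that $N_K$ is a regular neighborhood of the diagonal in a suitable subdivision of $K\times K$ (subdividing the \emph{product}, not $K$), and the paper is content to assert this. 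The invocation of \cref{prop:mLocalEquivalences} is also misplaced---that proposition concerns local coHochschild chains and does not perform any subdivision.
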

\begin{proof}
    For the `if' direction, when $o_K$ is an $K$-controlled fundamental chain, by definition there is a map of complexes
    \[ f \colon C_*(K') \to C^{n-*}(K) \]
    and a map
    \[ h \colon C_*(K') \to C_{*+1}(K') \]
    which is a homotopy between $f \circ (- \underset{\mathrm{F}}{\frown} o_K)$ and the identity. The fact that $f$ is a map $(R,K)$-modules implies that the element
    \[ \theta \colon C_*(K') \otimes C_{n-*}(K) \to R \]
    that is adjoint to it is a local pairing, and the fact that $h$ is a map of graded $(R,K)$-modules implies that $\theta$ is nondegenerate and that it is a 1-local homotopy.

    For the `only if' direction, suppose that we have a nondegenerate local pairing $\theta$. Since $K$ is 1-fine,
    \[ N_K = \{ (\sigma,\tau)\ |\ \overline\sigma \cap \overline\tau \neq \varnothing\} \]
    seen as a subset of $K\times K$, is a neighborhood for the diagonal, and the restriction map
    \[ H^*((K \times K)- \Delta(K)) \to H^*((K \times K) - N_K) \]
    is an isomorphism. So we pick $\theta'$ homologous to $\theta$ in the former group, and by \cref{thm:KControlledFundChain} we conclude that $o_K$ is a $K$-controlled fundamental chain.
\end{proof}

\section{Homotopy pairings on categorical coalgebras} \label{sec:homotopypairings}
In this section, we describe the notion of a homotopy pairing on a categorical coalgebra, which consists of a pairing together with higher homomotopies controlling compatibilities with the underlying coassociative coproduct. We work over an arbitrary commutative ring $R$. Similar constructions and results are obtained over the rationals in \cite{TZPD}. Let $C$ be a categorical coalgebra and consider the graded module
\begin{align*} 
    \Hom^*_{C_0^e\otimes C_0^e}(C\otimes C, \Omega C \otimes \Omega C) &= \prod_{a,b,c,d \in \mathcal{S}_C}\Hom_R^*(C(a,b)\otimes C(c,d),\Omega C (a,d) \otimes \Omega C(c,b)).
\end{align*}
For any $\varphi \in \Hom^*_{C_0^e\otimes C_0^e}(C\otimes C, \Omega C \otimes \Omega C)$ and $x \otimes y \in C \otimes C$ we write
\[\varphi(x,y)= \sum_{\beta} \varphi(x,y)'_\beta \otimes \varphi(x,y)''_\beta= \varphi(x,y)' \otimes \varphi(x,y)'' \in \Omega C \otimes \Omega C .\]
 We equip $\Hom^*_{C_0^e\otimes C_0^e}(C\otimes C, \Omega C \otimes \Omega C)$ with the differential given by
\begin{align}\label{eq:diffHom}
    (d\varphi)(x,y) = & d_{\Omega C \otimes \Omega C}(\varphi(x,y)) - (-1)^{\varphi}\ \varphi(d_{C \otimes C}(x,y)) \nonumber \\
    &-(-1)^{\varphi x^{(1)}}\ \{x^{(1)}\} \varphi(x^{(2)},y)' \otimes \varphi(x^{(2)},y)'' \nonumber \\
    &+(-1)^{x^{(2)}y + x^{(1)} + y + \varphi} \varphi(x^{(1)},y)' \otimes \varphi(x^{(1)},y)'' \{x^{(2)}\} \\
    &+(-1)^{\varphi(x,y^{(1)})'' y^{(2)} + \varphi(x,y^{(1)})'} \varphi(x,y^{(1)})'\{y^{(2)}\} \otimes \varphi(x,y^{(1)})'' \nonumber \\
    &-(-1)^{y^{(1)}y^{(2)} + \varphi(x,y^{(2)})'' y^{(1)} + \varphi(x,y^{(2)})'} \varphi(x,y^{(2)})' \otimes \{y^{(1)}\}\varphi(x,y^{(2)}) \nonumber
\end{align}
where $d_{C\otimes C}=d_C \otimes \text{id}_C + \text{id}_C \otimes d_C$, $d_{\Omega C \otimes \Omega C}$ is defined similarly, and the composition rule in $\Omega C$ is just given by juxtaposition. We may graphically represent this differential by
\begin{align*}
    \tikzfig{
        \node [vertex] (c) at (0,0) {$d\varphi$};
        \draw [->-] (0,1.5) to (c);
        \draw [->-] (0,-1.5) to (c);
        \draw [->-] (c) to (-1.5,0);
        \draw [->-] (c) to (1.5,0);
    }\ &=\ \pm\tikzfig{
        \node [vertex] (c) at (0,0) {$\varphi$};
        \node [vertex] (del) at (-1,0) {$d$};
        \draw [->-] (0,1.5) to (c);
        \draw [->-] (0,-1.5) to (c);
        \draw [->-] (c) to (del);
        \draw [->-] (del) to (-1.5,0);
        \draw [->-] (c) to (1.5,0);
    }\ + \ \tikzfig{
        \node [vertex] (c) at (0,0) {$\varphi$};
        \node [vertex] (del) at (1,0) {$d$};
        \draw [->-] (0,1.5) to (c);
        \draw [->-] (0,-1.5) to (c);
        \draw [->-] (c) to (del);
        \draw [->-] (del) to (1.5,0);
        \draw [->-] (c) to (-1.5,0);
    } \\
    \pm\ &\tikzfig{
        \node [vertex] (c) at (0,0) {$\varphi$};
        \node [vertex] (del) at (0,1) {$d$};
        \draw [->-] (0,1.5) to (del);
        \draw [->-] (del) to (c);
        \draw [->-] (0,-1.5) to (c);
        \draw [->-] (c) to (-1.5,0);
        \draw [->-] (c) to (1.5,0);
    }\ \pm\ \tikzfig{
        \node [vertex] (c) at (0,0) {$\varphi$};
        \node [vertex] (del) at (0,-1) {$d$};
        \draw [->-] (0,1.5) to (c);
        \draw [->-] (0,-1.5) to (del);
        \draw [->-] (del) to (c);
        \draw [->-] (c) to (-1.5,0);
        \draw [->-] (c) to (1.5,0);
    } \\
    \pm\ \tikzfig{
        \node [vertex] (c) at (0,0) {$\varphi$};
        \node [vertex] (del) at (0,1) {$\Delta$};
        \node [bullet] (m) at (-1,0) {};
        \draw [->-] (0,1.5) to (del);
        \draw [->-] (del) to (c);
        \draw [->-] (0,-1.5) to (c);
        \draw [->-] (c) to (m);
        \draw [->-] (m) to (-1.5,0);
        \draw [->-] (c) to (1.5,0);
        \draw [->-, bend right] (del) to (m);
    }\ &\pm \ \tikzfig{
        \node [vertex] (c) at (0,0) {$\varphi$};
        \node [vertex] (del) at (0,1) {$\Delta$};
        \node [bullet] (m) at (1,0) {};
        \draw [->-] (0,1.5) to (del);
        \draw [->-] (del) to (c);
        \draw [->-] (0,-1.5) to (c);
        \draw [->-] (c) to (-1.5,0);
        \draw [->-] (c) to (m);
        \draw [->-] (m) to (1.5,0);
        \draw [->-, bend left] (del) to (m);
    }\ \pm \ \tikzfig{
        \node [vertex] (c) at (0,0) {$\varphi$};
        \node [vertex] (del) at (0,-1) {$\Delta$};
        \node [bullet] (m) at (1,0) {};
        \draw [->-] (0,1.5) to(c);
        \draw [->-] (0,-1.5) to (del);
        \draw [->-] (del) to (c);
        \draw [->-] (c) to (-1.5,0);
        \draw [->-] (c) to (m);
        \draw [->-] (m) to (1.5,0);
        \draw [->-, bend right] (del) to (m);
    }\ \pm \ \tikzfig{
        \node [vertex] (c) at (0,0) {$\varphi$};
        \node [vertex] (del) at (0,-1) {$\Delta$};
        \node [bullet] (m) at (-1,0) {};
        \draw [->-] (0,1.5) to(c);
        \draw [->-] (0,-1.5) to (del);
        \draw [->-] (del) to (c);
        \draw [->-] (c) to (m);
        \draw [->-] (m) to (-1.5,0);
        \draw [->-] (c) to (1.5,0);
        \draw [->-, bend left] (del) to (m);
    }
\end{align*}
where the $C$ inputs are top and bottom, and the $\Omega C$ outputs and left and right, in those orders, and we omit the signs.

\begin{definition}
    A \textit{homotopy pairing} of degree $-n$ on a categorical coalgebra $C$ is an element \[\alpha \in \Hom^n_{C_0^e\otimes C_0^e}(C\otimes C, \Omega C \otimes \Omega C)\] such that $d \alpha=0$.
\end{definition}

\begin{remark}
    A homotopy pairing is a coalgebraic version of a $2$-truncated pre-Calabi Yau algebra in the language of \cite{KTV} or of a $V_2$-algebra, that is, an $A_{\infty}$-algebra with invariant and symmetric homotopy co-inner product, in the language of \cite{TZ}. We note that there is no non-degeneracy condition in the definition of a homotopy pairing. The signs in \cref{eq:diffHom} can be obtained by translating the signs for pre-Calabi--Yau elements to the context of categorical coalgebras used here.
\end{remark}

\begin{example} 
    Suppose $A$ is a finite-dimensional dg connected associative algebra over a field $\mathbb{k}$ equipped with a non-degenerate degree $-n$ pairing $\langle- , -\rangle \colon A \otimes A \to \mathbb{k}$. By non-degeneracy, the induced map $\rho \colon A \to \mathrm{Hom}_\mathbb{k}(A,\mathbb{k})$, given by $\rho(a)=\langle -,a\rangle$, is an isomorphism of degree $-n$. Furthermore, let us suppose that the following two compatibility equations hold
    \begin{align} \label{eq:twocompatibilities}
        \langle ab,c\rangle= \langle a, bc\rangle \text{ and } \langle a,bc\rangle= (-1)^{c(a+b)} \langle ca,b\rangle.
    \end{align}
    When $A$ is unital, the equations above imply that the pairing is symmetric. Since $A$ is finite dimensional as a $\mathbb{k}$-vector space, the linear dual of the product of $A$ makes $C=\mathrm{Hom}_\mathbb{k}(A,\mathbb{k})$ into a dg connected coassociative coalgebra, and thus $\Omega C$ is a dg associative algebra. The map
    \[ \alpha \colon C \otimes C \xrightarrow{\rho^{-1} \otimes \rho^{-1}} A \otimes A \xrightarrow{\langle- ,- \rangle} \mathbb{k} \cong \mathbb{k} \otimes \mathbb{k} \hookrightarrow \Omega C \otimes \Omega C\]
    is an example of a homotopy pairing on $C$, since the compatibility relations in \cref{eq:twocompatibilities} imply that $d \alpha=0$. This example  includes symmetric Frobenius algebras and also the Poincar\'e duality $\mathbb{Q}$-algebra models of \cite{LS} for simply connected closed oriented manifolds. 
\end{example}

We now fix a simplicial set $X$ obtained by a coherent ordering of the vertices in each simplex of an underlying $1$-fine finite connected simplicial complex. Let $\sigma$ and $\tau$ be two simplices of $X$ such that $\overline\sigma \cap \overline\tau \neq \varnothing$, and let $v,w$ be any pair of vertices of $\sigma, \tau$, respectively. There is a distinguished class
\[ [p_{v,w}] \in H_0(\Omega\mathcal{C}(\widetilde{X}) (v,w)) \]
which can be represented by any simplicial path between $v$ and $w$ in $\sigma \cup \tau$. This class is uniquely defined and depends only on $v$ and $w$, and not on $\sigma$ and $\tau$, because of the assumption that $X$ is 1-fine. 

We now show that any local pairing $\theta$ on the underlying simplicial complex can be lifted to a local homotopy pairing. We first lift the pairing to $\calC(\widetilde{X})$ by setting
\[ \widetilde\theta(\check{\sigma},z) = - \theta(\sigma,z), \quad \widetilde\theta(\check{\sigma},\check{\tau}) =  \theta(\sigma,\tau), \quad \widetilde\theta(x^{k}_\sigma,z) = \widetilde\theta(y^{k}_\sigma,z) = 0\]
for any $\sigma,\tau \in X_1$, $k \ge 2$ and $z \in \calC(\widetilde{X})$.
\begin{theorem}\label{thm:alphaExistence}
    Given any local pairing $\theta$ on the underlying simplicial complex of $X$, there is a homotopy pairing
    \[ \alpha \colon \mathcal{C}(\widetilde{X}) \otimes \mathcal{C}(\widetilde{X}) \to \Omega \mathcal{C}(\widetilde{X}) \otimes \Omega \mathcal{C}(\widetilde{X}) \]
    such that following conditions hold for simplices $\sigma$ and $\tau$ in $\widetilde{X}$:
    \begin{enumerate}
        \item if $\sigma \cap \tau =\emptyset$, then $\alpha(\sigma,\tau) = 0$,
        \item decomposing $\alpha(\sigma,\tau) = \sum_{a \in I} \alpha(\sigma,\tau)'_a \otimes \alpha(\sigma,\tau)''_a$ as a sum of simple tensors of generators, we have
        \[ \Supp(\alpha(\sigma,\tau)'_a) \cup \Supp(\alpha(\sigma,\tau)''_a) \subset \sigma \cup \tau, \]
        and 
        \item if $\dim(\sigma) + \dim(\tau) = n$, then
     \[ [\alpha(\sigma,\tau)] = \widetilde{\theta}(\sigma,\tau) \cdot [p_{s(\sigma),t(\tau)} \otimes p_{s(\tau),t(\sigma)}].\]
    \end{enumerate}
\end{theorem}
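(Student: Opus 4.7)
The plan is to construct $\alpha$ by obstruction theory, extending its definition inductively over non-degenerate pairs $(\sigma,\tau)$ of simplices of $\widetilde{X}$ organized by total input dimension $|\sigma|+|\tau|$. Since $\Omega\mathcal{C}(\widetilde{X})\otimes \Omega\mathcal{C}(\widetilde{X})$ is concentrated in non-negative degrees and $\alpha$ has degree $-n$, we must put $\alpha(\sigma,\tau)=0$ whenever $|\sigma|+|\tau|<n$. For $|\sigma|+|\tau|=n$, the output lies in degree zero; when $\overline\sigma\cap\overline\tau\ne\varnothing$ I would pick specific simplicial path representatives $p_{s(\sigma),t(\tau)}$ and $p_{s(\tau),t(\sigma)}$ entirely supported in $\sigma\cup\tau$ (any two such choices are homotopic since $\sigma\cup\tau$ sits inside a contractible subcomplex by $1$-fineness of $K$) and set $\alpha(\sigma,\tau)=\widetilde\theta(\sigma,\tau)\cdot p_{s(\sigma),t(\tau)}\otimes p_{s(\tau),t(\sigma)}$, enforcing conditions (1) and (3) directly.

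For the inductive step, suppose $\alpha$ has been defined on all pairs of total dimension $\le k$ (with $k\ge n$) in a way that is local in the sense of (2) and satisfies $(d\alpha)(\sigma,\tau)=0$ on such pairs. Given a pair $(\sigma,\tau)$ of total dimension $k+1$, the formula \cref{eq:diffHom} expresses $(d\alpha)(\sigma,\tau)$ as $d_{\Omega C\otimes\Omega C}\alpha(\sigma,\tau)$ plus a collection of terms each of which evaluates $\alpha$ at a pair of strictly smaller total dimension: the $-\alpha(d_{C\otimes C}(\sigma,\tau))$ contribution drops total dimension by one, and the four composition terms involve an Alexander--Whitney split $\sigma^{(1)}\otimes\sigma^{(2)}$ or $\tau^{(1)}\otimes\tau^{(2)}$ with at least one proper face, so that the new inputs to $\alpha$ strictly decrease in dimension. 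Denote the (known) sum of these lower-order terms by $O(\sigma,\tau)$; I would then take $\alpha(\sigma,\tau)$ to be any element of $\Omega\mathcal{C}(\widetilde{Z_{\sigma\cup\tau}})\otimes \Omega\mathcal{C}(\widetilde{Z_{\sigma\cup\tau}})$ with $d_{\Omega C\otimes\Omega C}\alpha(\sigma,\tau)=O(\sigma,\tau)$, where $Z_{\sigma\cup\tau}$ is the contractible subcomplex provided by \cref{def:fine}.

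Two verifications underpin the induction. First, locality (2) is preserved because the Alexander--Whitney coproduct and the differential $d$ on $\mathcal{C}(\widetilde{X})$ only produce faces of $\sigma$ and $\tau$, so every term defining $O(\sigma,\tau)$ is supported in $\sigma\cup\tau\subset Z_{\sigma\cup\tau}$, and we choose a primitive with the same support property. Second, $O(\sigma,\tau)$ is a cycle of degree $k-n$ in the above local target: this follows by expanding the identity $d^2=0$ on the Hom complex and using coassociativity together with the coderivation property of $d$ to cancel the composition terms, an argument that is routine modulo careful sign bookkeeping against \cref{eq:diffHom}. Since $|Z_{\sigma\cup\tau}|$ is contractible and the homotopy category of $\widetilde{Z_{\sigma\cup\tau}}$ is a groupoid, \cref{thm:cobarmodel} gives that each hom complex $\Omega\mathcal{C}(\widetilde{Z_{\sigma\cup\tau}})(v,w)$ is quasi-isomorphic to $R$ concentrated in degree zero; by Künneth the relevant target is acyclic in all positive degrees, so for every $k\ge n+1$ the cycle $O(\sigma,\tau)$, living in degree $k-n\ge 1$, is a boundary and the extension exists.

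The main obstacle is the transition from $k=n$ to $k+1=n+1$, where $O(\sigma,\tau)$ is a $0$-cycle and acyclicity only yields $H_0\cong R$ rather than vanishing. One must instead show directly that its class in $H_0\bigl(\Omega\mathcal{C}(\widetilde{Z_{\sigma\cup\tau}})(s(\sigma),t(\tau))\bigr)\otimes H_0\bigl(\Omega\mathcal{C}(\widetilde{Z_{\sigma\cup\tau}})(s(\tau),t(\sigma))\bigr)\cong R$ vanishes. Unwinding the base-case definition and collecting the Alexander--Whitney contributions in total dimension $n+1$, this reduces to the hypothesis that $\widetilde\theta$ (equivalently $\theta$) is a chain map $C_*(K)\otimes C_*(K)\to R$, together with a careful accounting of the curvature contribution from \cref{eq:curvatureChains} coming from the $2$-simplices of $\widetilde{X}$ and of the chosen base-case paths; this compatibility is precisely what makes the whole inductive construction get off the ground.
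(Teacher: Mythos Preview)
Your argument is essentially the paper's: construct $\alpha$ by induction on $|\sigma|+|\tau|$, seed the base case at total dimension $n$ using $\widetilde\theta$ times chosen path representatives, check that the degree-zero obstruction at total dimension $n+1$ vanishes because $\widetilde\theta$ is a chain map, and kill all higher obstructions by contractibility. One small correction: routing the acyclicity argument through $Z_{\sigma\cup\tau}$ only guarantees a primitive supported in $Z_{\sigma\cup\tau}$, which is weaker than condition~(2) demands; since two closed simplices of $K$ with nonempty intersection meet along a common face, $\sigma\cup\tau$ is already contractible, so (as the paper does) you can and should choose the primitive directly with support in $\sigma\cup\tau$, making the detour through $Z_{\sigma\cup\tau}$ unnecessary.
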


\begin{proof}
    We produce $\alpha$ using an inductive argument on the sum of dimensions of the inputs. This argument has two special cases in the beginning, at degrees $n$ and $n+1$, and then continues inductively. Note that we must have $\alpha(\sigma,\tau)=0$ if that sum is smaller than $n$, by degree reasons. So we start the process with all pairs $\sigma,\tau$ such that $\dim(\sigma) + \dim(\tau) = n$. If $\sigma \cap \tau = \varnothing$, we set $\alpha(\sigma,\tau) = 0$, and otherwise, we pick any paths
    \[ p_{s(\sigma),t(\tau)} \in \Omega \mathcal{C}(\widetilde{X})(s(\sigma),t(\tau)), \quad p_{s(\tau),t(\sigma)}  \in \Omega \mathcal{C}(\widetilde{X})(s(\tau),t(\sigma)) \]
    in $\sigma \cup \tau$ and set
    \[ \alpha(\sigma,\tau) = \widetilde{\theta}(\sigma,\tau) \cdot \left(p_{s(\sigma),t(\tau)} \otimes  p_{s(\tau),t(\sigma)}\right) \in \Omega \mathcal{C}(\widetilde{X})(s(\sigma),t(\tau)) \otimes \Omega \mathcal{C}(\widetilde{X})(s(\tau),t(\sigma)).  \]

    Going one dimension up, given any pair of simplices with $\dim(\sigma) + \dim(\tau) = n+1$, in order for $\alpha$ to be closed we need to find a value for $\alpha(\sigma,\tau)$ satisfying
    \begin{align}\label{eq:alphaClosed}
        d_{\Omega \mathcal{C}(\widetilde{X})\otimes \Omega \mathcal{C}(\widetilde{X})}(\alpha(\sigma, \tau)) =\ &+(-1)^{n}\ \alpha(d_{C \otimes C}(\sigma,\tau)) \nonumber \\
        &+(-1)^{n \sigma^{(1)}}\ \{\sigma^{(1)}\} \alpha(\sigma^{(2)},\tau)' \otimes \alpha(\sigma^{(2)},\tau)'' \nonumber \\
        &+(-1)^{\sigma^{(2)}\tau + \sigma^{(1)} + \tau + n + 1} \alpha(\sigma^{(1)},\tau)' \otimes \alpha(\sigma^{(1)},\tau)'' \{\sigma^{(2)}\} \\
        &+(-1)^{\alpha(\sigma,\tau^{(1)})'' \tau^{(2)} + \alpha(\sigma,\tau^{(1)})'+1} \alpha(\sigma,\tau^{(1)})'\{\tau^{(2)}\} \otimes \alpha(\sigma,\tau^{(1)})'' \nonumber \\
        &+(-1)^{\tau^{(1)}\tau^{(2)} + \alpha(\sigma,\tau^{(2)})'' \tau^{(1)} + \alpha(\sigma,\tau^{(2)})'} \alpha(\sigma,\tau^{(2)})' \otimes \{\tau^{(1)}\}\alpha(\sigma,\tau^{(2)}) \nonumber
    \end{align}
    It suffices to show that the homology class of the right-hand side is zero in $H_0(\Omega \mathcal{C}(\widetilde{X}) \otimes \Omega \mathcal{C}(\widetilde{X}))$. For that, let us write it explicitly, for $\sigma = (v_0,\dots,v_p), \tau = (w_0,\dots,w_q)$, with $p+q=n+1$. We note that there are $(p-1)+(q-1)$ terms coming from the internal differential, and only four nontrivial terms involving the coproduct. Multiplying by an overall sign we have
    \begin{align*}
        (-1)^{n} &d_{\Omega \mathcal{C}(\widetilde{X})\otimes \Omega \mathcal{C}(\widetilde{X})}(\alpha(\sigma, \tau)) = +\sum_{i=1}^{p-1} (-1)^i \alpha(v_0,\dots,\widehat{v_i},\dots,v_p, w_0,\dots,w_q) \\
        &+\sum_{j=1}^{p-1} (-1)^{p+j} \alpha(v_0,\dots,v_p, w_0,\dots,\widehat{w_j},\dots,w_q) \\
        &+\{v_0,v_1\}\alpha(v_1,\dots,v_p, w_0,\dots,w_q)'\otimes \alpha(v_1,\dots,v_p, w_0,\dots,w_q)'' \\
        &+(-1)^{p}\ \alpha(v_0,\dots,v_{p-1}, w_0,\dots,w_q)' \otimes \alpha(v_0,\dots,v_{p-1}, w_0,\dots,w_q)''\{v_{p-1},v_p\}\\
        &+(-1)^{p+q}\ \alpha(v_0,\dots,v_p, w_0,\dots,w_{q-1})'\{w_{q-1},w_q\} \otimes \alpha(v_0,\dots,v_p, w_0,\dots,w_{q-1})''\\
        &+(-1)^p\ \alpha(v_0,\dots,v_p, w_1,\dots,w_q)'\otimes \{w_0,w_1\} \alpha(v_0,\dots,v_p, w_1,\dots,w_q)''
    \end{align*}
    We see that each term in the right-hand side represents a multiple of the same class 
    \[ [p_{s(\sigma),t(\tau)}] \otimes [p_{s(\tau),t(\sigma)}]. \] 
    They correspond, with correct signs, to the terms appearing in the expression for
    \[ \widetilde\theta \circ (\delta \otimes \id + \id \otimes \delta) (\sigma,\tau), \]
    with $\delta$ being the usual differential on normalized chains (and not the modified differential). Since $\widetilde{\theta}$ is compatible with the differential, the right-hand side represents the zeroth homology class and is exact.

    We then proceed inductively on the total dimension. For $\dim(\sigma) + \dim(\tau) = N \ge n+2$, the right-hand side of \cref{eq:alphaClosed} is a closed element of degree $N-n-1$ and represents a homology class of that degree in the path space of $\sigma \cup \tau$. By 1-fineness, this space is contractible so we pick $\alpha(\sigma,\tau)$ to be any primitive with support contained in $\sigma \cup \tau$.
\end{proof}

 Any homotopy pairing on $\mathcal{C}(\widetilde{X})$ satisfying (1) and (2) in \cref{thm:alphaExistence} will be called a \textit{local homotopy pairing}. If it satfies (3) we will say it \textit{lifts} the pairing $\theta$. When constructing $\alpha$ above, choices were made at each inductive step. Nevertheless, the cohomology class of $\alpha$ only depends on that of $\theta$.
\begin{proposition}\label{prop:alphaUnique}
    The cohomology class \[[\alpha] \in H^n\big(\Hom^*_{\mathcal{C}(\widetilde{X})_0^e\otimes \mathcal{C}(\widetilde{X})_0^e}(\mathcal{C}(\widetilde{X})\otimes \mathcal{C}(\widetilde{X}), \Omega \mathcal{C}(\widetilde{X}) \otimes \Omega \mathcal{C}(\widetilde{X})),d\big)\] is independent of the choices made in the proof of \cref{thm:alphaExistence} and only depends on the cohomology class $[\theta] \in H^n(X \times X)$.
\end{proposition}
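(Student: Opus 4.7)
The plan is to argue that two homotopy pairings $\alpha, \alpha'$ whose underlying local pairings $\theta, \theta'$ satisfy $[\theta] = [\theta']$ determine the same class in $H^n\bigl(\Hom^*_{\mathcal{C}(\widetilde{X})_0^e\otimes \mathcal{C}(\widetilde{X})_0^e}(\mathcal{C}(\widetilde{X})\otimes \mathcal{C}(\widetilde{X}), \Omega \mathcal{C}(\widetilde{X}) \otimes \Omega \mathcal{C}(\widetilde{X}))\bigr)$, which contains the statement of the proposition by specializing to $\theta = \theta'$. Fixing any primitive $\phi$ with $\theta - \theta' = \partial\phi$, I will construct some $\beta \in \Hom^{n-1}$ satisfying $d\beta = \alpha - \alpha'$ by induction on the total dimension of the input simplices, paralleling the construction of $\alpha$ itself in the proof of \cref{thm:alphaExistence} but one cohomological degree lower.

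The base cases of the induction proceed as follows. For input $(\sigma,\tau)$ of total dimension $< n-1$, set $\beta(\sigma,\tau) = 0$; for total dimension $n-1$, set $\beta(\sigma,\tau) = \widetilde{\phi}(\sigma,\tau)\cdot\bigl(p_{s(\sigma),t(\tau)} \otimes p_{s(\tau),t(\sigma)}\bigr)$ using arbitrary choices of paths in $\sigma \cup \tau$ when $\overline{\sigma}\cap\overline{\tau}\neq \varnothing$ and zero otherwise; for total dimension $n$, the identity $\partial\phi = \theta - \theta'$ combined with the lifting hypothesis on $\alpha,\alpha'$ implies that the class of $(\alpha - \alpha' - d\beta_{<n})(\sigma,\tau)$ is zero in $H_0$ of the local path complex in $\sigma \cup \tau$, so we may pick a degree-$1$ primitive with support in $\sigma \cup \tau$. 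The inductive step for total dimension $N > n$ goes as in \cref{thm:alphaExistence}: the obstruction $(\alpha - \alpha' - d\beta_{< N})(\sigma,\tau)$ is closed of positive homological degree and supported in $\sigma \cup \tau$, so by the contractibility of the local path complex in $Z_{\sigma \cup \tau}$ (provided by $1$-fineness) we may choose a primitive with local support.

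The main obstacle is propagating locality through the inductive step: at each dimension, the cocycle obstruction to be annihilated must have support contained in $\sigma \cup \tau$ so that contractibility of the local path space can be invoked. This is a bookkeeping check using the explicit form of the differential on $\Hom^*$ in \cref{eq:diffHom} together with the fact that the modified simplicial boundary $d$, the Alexander--Whitney coproduct, and the curvature $\eta$ on $\mathcal{C}(\widetilde{X})$ all preserve simplicial supports, and that $\widetilde{\phi}$ inherits locality from $\phi$ on $K$. Once this locality is established, the inductive step reduces at each stage to finding a primitive in a contractible complex, exactly as in the construction of $\alpha$. This cascading inductive strategy is the direct analogue, in the setting of homotopy pairings, of the acyclic-models arguments used in \cref{thm:alphaExistence} and \cref{prop:constantLoops}.
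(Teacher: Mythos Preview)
Your proposal is correct and follows essentially the same approach as the paper: the paper's proof simply states that one can ``use the same inductive argument (over the total input degree) to construct a homotopy $\beta$ between $\alpha_1,\alpha_2$'', and you have spelled out precisely that induction, seeding it with a primitive $\phi$ of $\theta-\theta'$ one degree below the base case of \cref{thm:alphaExistence}. Your attention to propagating locality through the inductive step is the right technical point to flag, and your use of the contractible neighborhoods $Z_{\sigma\cup\tau}$ supplied by $1$-fineness matches the mechanism in the existence proof.
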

\begin{proof}
    Let $\theta_1,\theta_2$ be two local pairings representing same cohomology class, and $\alpha_1, \alpha_2$ obtained by applying the inductive argument to them, respectively, using any intermediate choices. We can use the same inductive argument (over the total input degree) to construct a homotopy between $\alpha_1,\alpha_2$, that is, a map of graded modules
    \[ \beta \colon \mathcal{C}(\widetilde{X}) \otimes \mathcal{C}(\widetilde{X}) \to\Omega \mathcal{C}(\widetilde{X}) \otimes \Omega \mathcal{C}(\widetilde{X}) \]
    of degree $n-1$ such that $d\beta = \alpha_1 - \alpha_2$.
\end{proof}

\section{Modeling the loop product and coproduct}

In this section we give explicit formulae for the algebraic models of the loop product and coproduct operations. Suppose that $C$ is any categorical coalgebra and \[\alpha \in \Hom^n_{C_0^e \otimes C_0^e}(C \otimes C, \Omega C\otimes \Omega C), \]
a homotopy pairing. The example to have in mind is $C= \mathcal{C}(\widetilde{X})$ and $\alpha$ arising from an intersection pairing. For simplicity, we denote the generators of the coHochschild complex $\coCH_*(C)$ by $(c,a)$ where $c \in C$ and $a \in \Omega C$.

\subsection{The algebraic loop product}

\begin{definition}\label{def:algebraicLoopProduct}
    The \emph{algebraic loop product} $\mu_{\alpha}$ associated to $(C,\alpha)$ is the map of degree $-n$ 
    \[ \mu_{\alpha} \colon \coCH_*(C) \otimes \coCH_*(C) \to \coCH_*(C) \]
    given by
    \[ \mu_\alpha \big( ( c_1, a_1) \otimes (c_2, a_2)\big) = (-1)^s \big(c_1^{(2)}, a_1 \cdot \alpha(c_1^{(1)},c_2)'\cdot a_2 \cdot \alpha(c_1^{(1)},c_2)''\big) \]
    where the sign is given $(-1)^n$ times the Koszul sign taking into account the degree of the operation $\alpha$, explicitly
    \[ s = n + c_1^{(1)}c_1^{(2)} + c_1^{(1)}a_1 + n c_1^{(2)} + n a_1 + a_2\  \alpha(c_1^{(1)},c_2)''. \]
\end{definition}

\noindent 

We may express the algebraic loop product by the following diagram

\[\tikzfig{
    \node [circ] (in1) at (-2,0) {};
    \node [cross] at (-2,0) {};
    \node [circ] (in2) at (0,0) {};
    \node [cross] at (0,0) {};
    \node [bullet] (w) at (-1,0) {};
    \node [vertex] (alpha) at (0,1) {$\alpha$};
    \node [bullet] (e) at (1,0) {};
    \node [bullet] (s) at (0,-1) {};
    \node [vertex,draw=red,red] (Delta) at (-3,0) {$\Delta$};
    \node [circ] (out) at (0,-2) {};
    \draw [->-,red] (in1) -- (Delta);
    \draw [->-,rounded corners,red] (Delta) -- (-3,2) -- (0,2) -- (alpha);
    \draw [->-] (in1) -- (w);
    \draw [->-,red] (in2) -- (alpha);
    \draw [->-,rounded corners] (alpha) -- (-1,1) -- (w);
    \draw [->-,rounded corners] (alpha) -- (1,1) -- (e);
    \draw [->-,rounded corners] (w) -- (-1,-1) -- (s);
    \draw [->-,rounded corners] (e) -- (1,-1) -- (s);
    \draw [->-] (in2) -- (e);
    \draw [->-] (s) -- (out);
    \draw [->-,red] (Delta) -- (-3,-3) -- (0,-3) -- (out);
}\]
In the diagram above, the red lines carry $C$ factors, the black lines carry $\Omega C$ factors, the dots are composition in $\Omega C$, the crossed circles are inputs, and the empty circles are outputs. We choose the left crossed circle to be the first input.

\begin{proposition} \label{prop:product}
    For any homotopy pairing $\alpha$ on $C$ the algebraic loop product $\mu_{\alpha}$ is a chain map. In particular, any homotopy pairing of degree $-n$ on $\mathcal{C}(\widetilde{X})$ gives a product of degree $-n$ on $H_*(L|X|)$.
\end{proposition}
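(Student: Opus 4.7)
The plan is to verify directly that $\partial \circ \mu_\alpha = (-1)^{-n} \mu_\alpha \circ \partial_\otimes$, where $\partial_\otimes = \partial \otimes \id + (-1)^\bullet \id \otimes \partial$, by expanding each side into an explicit sum of monomials and matching the terms via the cocycle equation $d\alpha = 0$ in the Hom complex from \cref{sec:homotopypairings}.

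First, I would expand $\partial \mu_\alpha((c_1,a_1)\otimes(c_2,a_2))$ starting from the output monomial $(c_1^{(2)},\ a_1 \cdot \alpha(c_1^{(1)},c_2)'\cdot a_2 \cdot \alpha(c_1^{(1)},c_2)'')$. Applying $\partial = d_C\boxtimes \id + \id \boxtimes d_{\Omega C} + \tau$ produces three groups of contributions: (a) one term with $d_C$ on the marked bead $c_1^{(2)}$; (b) four terms from the Leibniz rule for $d_{\Omega C}$ applied to the concatenated necklace-word, hitting each of the four factors $a_1,\alpha',a_2,\alpha''$; and (c) two terms from $\tau$, which further splits $c_1^{(2)}$ via $\Delta$ and moves one factor to the front or the back of the necklace.

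Second, I would expand $\mu_\alpha \partial_\otimes((c_1,a_1)\otimes(c_2,a_2))$ using the formula for $\partial$ on each factor. Each of the $\partial(c_i,a_i)$ has again three pieces of types (a)--(c). Plugging these into $\mu_\alpha$, the contributions with $d_C$ on $c_i$ split, via the fact that $\Delta$ is a chain map, into pieces acting on $c_1^{(1)}, c_1^{(2)}, c_2$ (or their further decompositions), and the $\tau$ contributions split $c_1$ by $\Delta$ and place one half into the $\Omega C$-necklace. Repeated use of coassociativity of $\Delta$ (identifying $c_1^{(1)(1)}\otimes c_1^{(1)(2)}\otimes c_1^{(2)}$ with $c_1^{(1)}\otimes c_1^{(2)}\otimes c_1^{(3)}$ as iterated Sweedler notation) is needed to bring these contributions into a common form with the $\tau$ terms from the left-hand side.

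Third, I would match the difference $\partial\mu_\alpha - (-1)^{-n}\mu_\alpha\partial_\otimes$ against the six terms of $d\alpha$ in \cref{eq:diffHom}. Schematically:
\begin{itemize}
\item Terms with $d_C$ or $d_{\Omega C}$ acting on a factor not interacting with $\alpha$ cancel pairwise, since multiplication in $\Omega C$ is associative and compatible with $d_{\Omega C}$, and $\partial$ squares to zero on $\coCH_*(C)$;
\item the terms with $d_C$ on the inputs of $\alpha$ assemble into $\alpha\circ d_{C\otimes C}$, matching the second term of $d\alpha$;
\item the terms with $d_{\Omega C}$ on $\alpha'$ or $\alpha''$ assemble into $d_{\Omega C\otimes \Omega C}\circ \alpha$, matching the first term of $d\alpha$;
\item the two $\tau$ contributions on the left, combined with the $\tau$ contributions coming from $\mu_\alpha\partial_\otimes$ (which, via coassociativity, produce further split pieces of $c_1$ adjacent to $\alpha'$ or $\alpha''$), together reconstruct the four bicomodule-action terms in $d\alpha$, namely those of the form $\{x^{(1)}\}\alpha(x^{(2)},y)'\otimes \alpha(x^{(2)},y)''$ and its three siblings.
\end{itemize}
Since $d\alpha = 0$, this sum vanishes inside the necklace-word that forms the output of $\mu_\alpha$, yielding the desired identity. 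The resulting homological statement for $H_*(L|X|)$ then follows from \cref{thm:cohochmodel} applied to $\widetilde{X}$.

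The main obstacle is Koszul sign bookkeeping: the formula for $\mu_\alpha$ already carries the intricate sign $s = n + c_1^{(1)}c_1^{(2)} + c_1^{(1)}a_1 + nc_1^{(2)} + na_1 + a_2\alpha(c_1^{(1)},c_2)''$, and each expansion of $\partial$ or $\tau$ introduces further Koszul twists. To keep this tractable I would work diagrammatically, using the graphical presentation of \cref{sec:homotopypairings}: represent $\mu_\alpha$ as the tree drawn after \cref{def:algebraicLoopProduct}, represent each differential summand as an insertion of a $d_C$-, $d_{\Omega C}$-, or $\Delta$-node into that tree, and match the resulting diagrams directly with the six diagrams in the graphical expansion of $d\alpha$. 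In this form the matching becomes essentially mechanical, and the only algebraic input beyond planar rearrangement is coassociativity of $\Delta$ together with the defining equation $d\alpha = 0$.
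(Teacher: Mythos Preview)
Your proposal is correct and follows essentially the same approach as the paper's proof, which simply states that the result ``follows immediately from the definition of the differential of $\Hom^*_{C_0^e \otimes C_0^e}(C \otimes C, \Omega C \otimes \Omega C)$, and from the fact that the differential on $\Omega C$ is a derivation for its composition map.'' Your detailed expansion of the Leibniz rule for $d_{\Omega C}$ on the concatenated necklace and the matching of the remaining terms against the six summands of $d\alpha$ is exactly what underlies this terse statement.
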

\begin{proof}
    Follows immediately from the definition of the differential of $\Hom^*_{C_0^e \otimes C_0^e}(C \otimes C, \Omega C \otimes \Omega C)$, and from the fact that the differential on $\Omega C$ is a derivation for its composition map.
\end{proof}

\subsection{The algebraic loop coproduct}
In order to define our algebraic model for the loop coproduct we start by introducing a degree $+1$ algebraic analogue of the $1$-parameter family of maps that ``split" or ``cut" a path at all possible times. 

\subsubsection{The scanning map}
Consider the dg $R$-module $\Omega C= \bigoplus_{v,w \in \mathcal{S}_C} \Omega C(v,w)$ equipped with the $C_0$-bicomodule structure given by the source and target maps. Consider the graded $R$-module
\[\Omega C \boxtimes C \boxtimes \Omega C, \] 
where the cotensor product $\boxtimes$ is taken over $C_0$, endowed with the differential
\begin{align*} \label{markedpaths}
    \partial(a \otimes c \otimes b) &= d_{\Omega C} a \otimes c \otimes b + (-1)^{a} a \otimes d_Cc \otimes b + (-1)^{a + c + 1} a \otimes c \otimes d_{\Omega C} b 
    \\
    & (-1)^{a} a \{c^{(1)}\} \otimes c^{(2)} \otimes b + (-1)^{|a|+|c^{(1)}|+1} a \otimes c^{(1)} \otimes \{c^{(2)}\}b.
\end{align*}
In the case $C=\mathcal{C}(\widetilde{X})$, we may use the maps from $\mathcal{C}(\widetilde{X})$ and $\Omega \mathcal{C}(\widetilde{X})$ to singular chains on path spaces, for any $v,w \in X_0$ we get a quasi-isomorphism
\[ \leftindex_{v}(\Omega \mathcal{C}(\widetilde{X}) \boxtimes \mathcal{C}(\widetilde{X}) \boxtimes \Omega \mathcal{C}(\widetilde{X}))_w \xrightarrow{\sim} S_*(P^*_{v,w}|X|), \]
where the target denotes the the singular chains on the \emph{marked} path space, i.e., the space of paths from $v$ to $w$ in $X$ together with a distinguished point along the path, and the left hand side is equipped with the differential $\partial$ defined above. If $\sigma \in \mathcal{C}(\widetilde{X})$ this map sends $a\otimes \sigma \otimes b \in \leftindex_{v}(\Omega \mathcal{C}(\widetilde{X}) \boxtimes \mathcal{C}(\widetilde{X}) \boxtimes \Omega \mathcal{C}(\widetilde{X}))_w$ to a family of marked paths in $|X|$ from $v=s(a)$ to $w=t(b)$ of dimension $|a|+|\sigma|+|b|$ with marked points lying inside $\sigma \subset |X|$.

\begin{definition}
    The \emph{scanning map}
    \[ \cals \colon \Omega C \to \Omega C \boxtimes C \boxtimes \Omega C \]
    is the degree $+1$ map of graded modules given by
    \begin{align*} 
        \cals(\{c_1|\dots|c_N\}) &= \gamma(\rho_l(c_1)')\ \otimes\ \rho_l(c_1)''\ \otimes\ \{c_2|\dots|c_N\} \\
        &+ \sum_{i=2}^{N-1} (-1)^{c_1 + \dots + c_{i-1} + i-1} \{c_1|\dots|c_{i-1}\}\ \otimes\ c_i\ \otimes\ \{c_{i+1}|\dots|c_N\} \\
        &+ (-1)^{c_1 + \dots + c_N} \{c_1|\dots|c_{N-1}\}\ \otimes\ \rho_r(c_N)'\ \otimes\ \gamma(\rho_r(c_N)''),
    \end{align*}
    where as before $\rho_l \colon C \to C_0 \otimes C$ and $\rho_r \colon C \to C \otimes C_0$ are the $C_0$-bicomodule structure maps and $\gamma \colon C_0 \to \Omega C$ sends each $a \in \mathcal{S}(C)$ to its identity morphism.
\end{definition}

The scanning map is a chain homotopy between two chain maps determined by the $C_0$-bicomodule structure of $\Omega C$ given by the source and target maps as stated in the following proposition.
\begin{proposition}\label{prop:scanFailure}
    The scanning map fails to be a chain map by an error term given by
    \begin{align*} 
        d(\cals(\{c_1|\dots|c_N\})) + \cals(d(\{c_1|\dots|c_N\})) &= \gamma(\rho_l(\rho_l(c_1)')' \ \otimes\ \rho_l(\rho_l(c_1)')''\ \otimes\ \{\rho_l(c_1)''|\dots|c_N\} \\
        &\pm \{c_1|\dots|\rho_r(c_N)'\} \otimes \rho_r(\rho_r(c_n)'')'\ \otimes\ \gamma(\rho_r(\rho_r(c_n)'')''),
    \end{align*}
    for any $\{c_1|\dots|c_N\} \in \Omega C$.
\end{proposition}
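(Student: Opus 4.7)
The plan is direct expansion and bead-by-bead matching. I would compute both $d\cals(\{c_1|\dots|c_N\})$ and $\cals d(\{c_1|\dots|c_N\})$ as sums of generators of $\Omega C \boxtimes C \boxtimes \Omega C$ and exhibit a pairing under which almost every term on one side is cancelled by a term on the other, with the residue being exactly the claimed right hand side.

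The terms of $\cals(\{c_1|\dots|c_N\})$ are labeled by a choice $i \in \{1,\dots,N\}$ of middle bead. Applying $\partial$ to such a term produces: (a) internal differentials of each individual bead; (b) the cobar differential (internal, coproduct, and curvature parts) of the left and right brackets; and (c) two terms in which the coproduct of the middle bead $c_i$ pushes one half into an adjacent bracket via the $a\{c^{(1)}\}\otimes c^{(2)}\otimes b$ and $a \otimes c^{(1)}\otimes \{c^{(2)}\}b$ pieces of $\partial$. On the other side, $d_{\Omega C}\{c_1|\dots|c_N\}$ produces internal differentials of each bead, coproducts that lengthen the list by inserting $c_j^{(1)}|c_j^{(2)}$, and curvature terms $\eta(c_j)$ that shorten it; applying $\cals$ then introduces a middle slot in the resulting sequence. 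The matching is organized by fixing a bead index $j$: internal-differential terms from the two computations cancel with opposite signs, and likewise curvature terms. The coproduct terms cancel telescopically: a splitting $c_j \mapsto c_j^{(1)}|c_j^{(2)}$ followed by placing the middle slot strictly to the left or right of the pair matches an internal coproduct term from the corresponding bracket on the $d\cals$ side, while the cases in which the new middle slot is placed exactly at $c_j^{(1)}$ or at $c_j^{(2)}$ match exactly the two $\partial$-terms from (c). The only contributions left unmatched come from the extremal positions $i=1$ and $i=N$, where no left or right bracket is present to absorb a coproduct half; these yield the two expressions on the right hand side, with the second application of $\rho_l$ (respectively $\rho_r$) coming from the coaction on the empty bracket $\gamma(\rho_l(c_1)')$ inside $\partial$.

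The main obstacle will be sign bookkeeping. Four sources of signs must line up: the Koszul rule, the desuspension signs implicit in the bar bracket $\{\cdot\}$ notation of the cobar construction, the $(-1)^{c_1+\cdots+c_{i-1}+i-1}$ signs in the definition of $\cals$, and the signs in the formula for $\partial$. Rather than tracking each pair of cancelling terms separately, I would fix one representative cancellation (for instance, the $c_j$ coproduct paired with the middle slot at position $j{+}1$), verify the signs once in that case, and deduce the remaining matchings by the symmetry of the construction. Verifying the small cases $N=1$ and $N=2$ by hand confirms both the structural pattern and the unresolved sign on the second term of the displayed formula.
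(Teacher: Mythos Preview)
Your proposal is correct and takes essentially the same approach as the paper, which simply states that the result ``is checked by direct computation.'' Your plan spells out how that direct computation is organized (bead-by-bead matching, telescoping of coproduct terms, residual contributions at the extremal positions $i=1$ and $i=N$), which is more detail than the paper provides but not a different argument.
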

\begin{proof} This is checked by direct computation. 
\end{proof}
When interpreted in terms of path spaces, the scanning map may be thought of a chain homotopy between two chain maps induced by the continuous maps
\[P|X| \to |X| \times P|X| \text{   and   } P|X| \to P|X| \times X\] 
given by $(\gamma,T)
\mapsto (\gamma(0), (\gamma,T))$
and
$(\gamma,T)\mapsto ((\gamma,T), \gamma(T))$, respectively. 

We write
\[ \cals(a) = \cals^{l} a \otimes \cals^c a \otimes \cals^r a \]
for the scanning map. Since the coproduct $\Delta$ of $C$ is coassociative, there is no ambiguity in using Sweedler's notation to write
\[ \Delta^2(c) \coloneqq (\Delta \otimes \id)(\Delta(c)) = (\id \otimes \Delta)(\Delta(c)) = c^{((1))} \otimes c^{((2))} \otimes c^{((3))}.\]

\begin{definition}\label{def:algloopcoproduct}
    The \emph{algebraic loop coproduct} associated to $(C,\alpha)$ is the map of degree $1-n$
    \[ \lambda_\alpha \colon \coCH_*(C) \to \coCH_*(C)\otimes \coCH_*(C)\]
    given by
    \[ \lambda_\alpha(c,a) = \pm \left(c^{((3))}, \cals^l a \  \alpha(c^{((2))},\cals^c a)'' \right) \otimes \left(c^{((1))}, \alpha(c^{((2))},\cals^c a)' \  \cals^r a\right)\]
    where the sign is given by $(-1)^{n+c}$ times the Koszul sign, taking into account the degrees of $\alpha$ and $\cals$ and starting from the order $\alpha,\cals,c,a$.
\end{definition}

We may express the algebraic loop coproduct by the following diagram

\[ \tikzfig{
    \node [vertex] (alpha) at (0,0) {$\alpha$};
    \node [vertex] (nabla) at (0,1) {$\Delta^2$};
    \node [circ] (in) at (0,2) {};
    \node [cross] at (0,2) {};
    \node [vertex] (scan) at (0,-1) {$\cals$};
    \node [bullet] (l) at (-1,-0.5) {};
    \node [bullet] (r) at (1,-0.5) {};
    \node [circ] (out1) at (-2,0.5) {};
    \node [circ] (out2) at (2,0.5) {};
    \draw [->-,red] (in) -- (nabla);
    \draw [->-,rounded corners,red] (nabla) -- (-2,1) -- (out1);
    \draw [->-,rounded corners,red] (nabla) -- (2,1) -- (out2);
    \draw [->-,red] (nabla) -- (alpha);
    \draw [->-,red] (scan) -- (alpha);
    \draw [->-,rounded corners] (in) -- (0,2.5) -- (2.5,2.5) -- (2.5,-1.5) -- (0,-1.5) -- (scan);
    \draw [->-=0.4,rounded corners] (scan) -- (-1,-1) -- (l);
    \draw [->-=0.4,rounded corners] (alpha) -- (-1,0) -- (l);
    \draw [->-=0.4,rounded corners] (scan) -- (1,-1) -- (r);
    \draw [->-=0.4,rounded corners] (alpha) -- (1,0) -- (r);
    \draw [->-=0.4,rounded corners] (l) -- (-2,-0.5) -- (out1);
    \draw [->-=0.4,rounded corners] (r) -- (2,-0.5) -- (out2);
}
\]
where we used the same conventions as in the diagram for the product, and the outputs are read with the \emph{right-hand side one first}. This operation does not give a chain map since $\cals$ fails to be a chain map, but we can control this failure.

\begin{proposition}\label{prop:coprodFailure}
    The algebraic loop coproduct satisfies the following equation.
    \begin{align*}
        (\partial \otimes 1 + 1 \otimes \partial) & \lambda_\alpha(c_0\{c_1|\dots|c_N\}) - (-1)^{n-1} \lambda_\alpha(\partial(c_0\{c_1|\dots|c_N\})) = \\
        &\pm \left(c_0^{((3))}, \alpha(c_0^{((2))},\rho_l(c_1)')'' \right) \otimes \left(c_0^{((1))}, \alpha(c_0^{((2))},\rho_l(c_1)')' \ \{\rho_l(c_1)''|\dots|c_N\} \right) \\
        &\pm \left(c_0^{((3))}, \{c_1|\dots|\rho_r(c_N)'\} \  \alpha(c_0^{((2))},\rho_r(c_N)'')'' \right) \otimes \left(c_0^{((1))}, \alpha(c_0^{((2))},\rho_r(c_N)''\right)
    \end{align*}
\end{proposition}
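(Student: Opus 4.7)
The plan is to expand both sides of the asserted equation in terms of the underlying operations $d_C$, $d_{\Omega C}$, $\Delta$, $\alpha$ and $\cals$, and show that the bulk of the terms cancel pairwise, leaving exactly the two boundary expressions on the right-hand side. The three key algebraic inputs are: (i) the identity $d\alpha = 0$ from \cref{eq:diffHom}, which converts terms of the form $\alpha \circ d_{C\otimes C}$ into sums of diagonal corrections $\{c^{(1)}\}\alpha(c^{(2)},-)$, $\alpha(-,-)\{c^{(2)}\}$ and their analogues on the second slot; (ii) the failure formula for the scanning map in \cref{prop:scanFailure}, which produces precisely two extra terms where the marked point sits at the far left or far right end of the necklace; and (iii) coassociativity of $\Delta$, which lets us re-bracket the threefold Sweedler decomposition $c_0^{((1))} \otimes c_0^{((2))} \otimes c_0^{((3))}$ freely.

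First I would expand $(\partial\otimes 1 + 1\otimes \partial)\lambda_\alpha(c_0\{c_1|\dots|c_N\})$ using the formula $\partial = d_C \boxtimes \id + \id \boxtimes d_{\Omega C} + \tau$ on each tensor factor. The $d_C$ pieces act on $c_0^{((1))}$ and $c_0^{((3))}$; the $d_{\Omega C}$ pieces act on the two cobar strings, and since $d_{\Omega C}$ is a derivation they split into contributions where the differential hits either a piece of the form $\cals^l a$ or $\cals^r a$, or the $\Omega C$-output legs of $\alpha$. I would then do the same expansion for $\lambda_\alpha(\partial(c_0\{c_1|\dots|c_N\}))$, where now $\partial$ acts on the single coHochschild generator before $\lambda_\alpha$: the contributions are $d_C c_0$ (which propagates via the threefold $\Delta^2$), $d_{\Omega C}\{c_1|\dots|c_N\}$ (which propagates through $\cals$), and the two $\tau$-terms that move $c_0^{(1)}$ or $c_0^{(2)}$ to the ends of the necklace.

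Matching the two expansions, the $d_C$ and $d_{\Omega C}$ terms crossing $\alpha$ get exchanged against each other using $d\alpha = 0$, the terms where $d_{\Omega C}$ interacts with $\cals$ are replaced using \cref{prop:scanFailure}, the two $\tau$-type pieces on each side match by coassociativity after regrouping the Sweedler factors of $\Delta^2 c_0$, and finally the Leibniz rule for $d_{\Omega C}$ on a composition in $\Omega C$ closes the bookkeeping on the cobar legs. The only terms not accounted for by any of these cancellations are the two boundary terms from \cref{prop:scanFailure}, in which the marked point $\cals^c a$ is replaced by the left endpoint $\rho_l(c_1)'$ or the right endpoint $\rho_r(c_N)''$; these are precisely the two terms on the right-hand side of the proposition. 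I expect the main obstacle to be the sign bookkeeping, since $\alpha$ has degree $n$, $\cals$ has degree $+1$, and the Sweedler pieces of $\Delta^2 c_0$ together with the beads $c_1,\dots,c_N$ contribute many factors that must be commuted past each other; the diagrammatic presentation introduced just after \cref{def:algebraicLoopProduct,def:algloopcoproduct} should make the combinatorics tractable, by turning each of $d\alpha = 0$, \cref{prop:scanFailure}, and coassociativity into a local rewriting rule on subdiagrams.
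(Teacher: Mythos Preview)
Your proposal is correct and follows essentially the same approach as the paper: the paper's proof reads in full ``Follows from a direct computation using \cref{prop:scanFailure},'' and what you have written is precisely a careful outline of that direct computation, correctly identifying the three algebraic inputs ($d\alpha=0$, the scanning failure, and coassociativity) that make the cancellations work. Your expansion is more detailed than the paper's one-line reference, but the strategy and key ingredients are identical.
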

\begin{proof}
    Follows from a direct computation using \cref{prop:scanFailure}.
\end{proof}

\begin{proposition} \label{prop:coprodctmodconstant}
    Suppose $X$ is a simplicial set obtained from a $1$-fine simplicial complex and $\alpha$ is a local homotopy pairing on $\calC= \mathcal{C}(\widetilde{X})$. Then the algebraic loop coproduct associated to $(\calC,\alpha)$ induces  a chain map 
    \[ \lambda_\alpha \colon \frac{\coCH_*(\calC)}{(\coCH_*(\calC))_{1\mathrm{-loc}}} \to \frac{\coCH_*(\calC) \otimes \coCH_*(\calC)}{(\coCH_*(\calC))_{1\mathrm{-loc}} \otimes \coCH_*(\calC) + \coCH_*(\calC) \otimes (\coCH_*(\calC))_{1\mathrm{-loc}}} \] of degree $1-n$. Consequently, when $R$ is a field, $\lambda_{\alpha}$ defines a coproduct of degree $1-n$ on $H_*(L|X|,|X|)$.
\end{proposition}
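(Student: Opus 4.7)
The strategy is to combine the explicit formula for the failure of $\lambda_\alpha$ to be a chain map (\cref{prop:coprodFailure}) with the locality property of $\alpha$ (from \cref{thm:alphaExistence}) and the fact that on a generator $\sigma_0\{\sigma_1|\cdots|\sigma_N\}$, the scanning map error terms only involve vertices at the junctions of the beads. First I would verify that $\lambda_\alpha$ respects the locality filtration: for any generator $z = c_0\{c_1|\dots|c_N\} \in (\coCH_*(\calC))_{1\mathrm{-loc}}$, each term in $\lambda_\alpha(z)$ has the form $(c_0^{((3))},\dots)\otimes(c_0^{((1))},\dots)$ where the inserted outputs of $\alpha(c_0^{((2))},\cals^c a)$ are supported in $c_0^{((2))}\cup \Supp(\cals^c a) \subseteq \Supp(z)$ by condition (2) of \cref{thm:alphaExistence}. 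Hence both tensor factors lie in $(\coCH_*(\calC))_{1\mathrm{-loc}}$, and in particular $\lambda_\alpha$ maps the subcomplex $(\coCH_*(\calC))_{1\mathrm{-loc}}$ into the subcomplex being quotiented on the target. So $\lambda_\alpha$ induces a well-defined map of graded modules on the quotients.

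Next I would analyze the two error terms in \cref{prop:coprodFailure}. Recall that $\rho_l(c_1)' = s(c_1) \in C_0$ and $\rho_l(c_1)'' = c_1$, and symmetrically for $\rho_r$; moreover the necklace structure gives $s(c_1)=t(c_0)$ and $t(c_N)=s(c_0)$, so these endpoint vertices lie in the closure of the marked bead $c_0$. The first error term then has the shape
\[ \pm \bigl(c_0^{((3))},\ \alpha(c_0^{((2))},\,s(c_1))''\bigr) \ \otimes\ \bigl(c_0^{((1))},\ \alpha(c_0^{((2))},\,s(c_1))'\,\{c_1|\cdots|c_N\}\bigr). \]
By locality of $\alpha$, the first tensor factor is supported inside $\Supp(c_0^{((3))}) \cup \Supp(c_0^{((2))}) \cup \{s(c_1)\} \subseteq c_0$, hence inside a single simplex of $X$, so it has diameter $\le 1$ and lies in $(\coCH_*(\calC))_{1\mathrm{-loc}}$. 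The same argument applied to the right endpoint shows that the second error term has its \emph{second} factor of diameter $\le 1$, hence in $(\coCH_*(\calC))_{1\mathrm{-loc}}$. Therefore the total error lies in
\[ (\coCH_*(\calC))_{1\mathrm{-loc}} \otimes \coCH_*(\calC) \ +\ \coCH_*(\calC) \otimes (\coCH_*(\calC))_{1\mathrm{-loc}}, \]
so the induced map on quotients commutes with the differentials.

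The only mildly subtle point is making sure the local-support bookkeeping goes through for $\alpha(c_0^{((2))},s(c_1))$ when $c_0$ is itself a vertex (so $c_0^{((2))}$ is also a vertex and the support genuinely reduces to a single 0-simplex); here locality plus the 1-fineness hypothesis ensure $\alpha$ vanishes unless its two simplex-inputs have intersecting closures, and when non-zero, all produced necklaces still have support in a single simplex. For the final sentence, the induced map $\lambda_\alpha$ on the quotient is, via the quasi-isomorphism of the corollary after \cref{prop:mLocalEquivalences} (available over a field), a chain map
\[ S_*(L|X|,|X|;R) \longrightarrow S_*(L|X|,|X|;R) \otimes S_*(L|X|,|X|;R), \]
and passing to homology with the K\"unneth isomorphism yields the desired coproduct on $H_*(L|X|,|X|)$ of degree $1-n$.
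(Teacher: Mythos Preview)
Your proposal is correct and follows essentially the same approach as the paper: both arguments use the explicit error formula from \cref{prop:coprodFailure}, observe that $\rho_l(c_1)'=s(c_1)=t(c_0)$ (and symmetrically for $\rho_r$) so the relevant vertex lies in $\sigma_0$, and then invoke locality of $\alpha$ to conclude that one tensor factor of each error term is supported in the single simplex $\sigma_0$ and hence is $1$-local. You reverse the order of the two claims (proving well-definedness on the source quotient before the chain-map property, whereas the paper does the opposite) and you are more explicit about the bicomodule computations, but the substance is identical.
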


\begin{proof}
    In other words, the proposition above says that $\lambda_\alpha$ becomes a chain map when composed to the canonical map to the quotient of the target, which then factors through the quotient of the source; we will prove these claims sequentially. For the first claim, we apply the formula of \cref{prop:coprodFailure} to the generator $(c,a) = \sigma_0\{\sigma_1|\dots|\sigma_N\}$. Observing the right-hand side, by locality of $\alpha$, we conclude that the first factor of the first term and the second factor of the second term have support contained in $\sigma_0$, so the right-hand side belongs to
    \[(\coCH_*(\calC))_{1\mathrm{-loc}} \otimes \coCH_*(\calC) + \coCH_*(\calC) \otimes (\coCH_*(\calC))_{1\mathrm{-loc}}. \]
    As for the second claim, if the support of $a$ is contained in some simplex $\sigma$, then, by locality of $\alpha$, so are both the left and right outputs, and all the terms in the right-hand side of the equation in \cref{prop:coprodFailure} are tensors of 1-local elements.
\end{proof}

\subsection{Comparison}
We will now compare the algebraic loop product and coproduct to the geometrically-defined Chas--Sullivan product and Goresky--Hingston coproduct, in the case where the simplicial complex $K$ comes from a triangulated closed oriented smooth manifold and its local pairing $\theta$ comes from a Thom cochain.

\subsubsection{Recap of definitions}\label{sec:recap}
Let $(M,g)$ be a smooth $n$-dimensional Riemannian manifold and $K$ a 2-fine simplicial complex (\cref{def:fine}) given by a triangulation of $M$. For ease of notation we will identify simplices of $K$ with their image as a subset of $M$. We consider the diagonal $M \subset M \times M$ and its neighborhoods
\[ N_\epsilon = \{(p,p')\in M \times M\ |\ \mathrm{dist}_g(p,p') \le \epsilon \} \]
for $\epsilon >0$ much smaller than the injectivity radius. The following is straightforward to check.
\begin{proposition}\label{prop:epsilonSimplices}
    For a small enough $\epsilon > 0$, the neighborhood $N_\epsilon$ of the diagonal is contained in the subset
    \[ \bigcup_{\sigma \cap \tau \neq \varnothing} \sigma \times \tau \]
    where $\sigma$ and $\tau$ are simplices of $K$.
\end{proposition}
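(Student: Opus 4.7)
The plan is to argue by compactness, using the observation that the complement of $U := \bigcup_{\sigma \cap \tau \neq \varnothing} \sigma \times \tau$ in $M \times M$ is closed and disjoint from the diagonal.

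First, I would note that since $K$ is a finite simplicial complex (as $M$ is closed, hence compact) and each product $\sigma \times \tau$ of simplices is compact (and therefore closed) in $M \times M$, the set
\[ F := \bigcup_{\sigma \cap \tau = \varnothing} \sigma \times \tau \]
is a finite union of closed sets, hence closed in $M \times M$. Moreover $F$ covers the complement of $U$: any point $(p,p')$ of $M \times M$ lies in some product $\sigma \times \tau$ of (closed) simplices, and either $\sigma \cap \tau \neq \varnothing$ (so the point lies in $U$) or $\sigma \cap \tau = \varnothing$ (so the point lies in $F$).

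Next, I would verify that $F$ is disjoint from the diagonal $\Delta(M) \subset M \times M$. Indeed, if $(p,p) \in \sigma \times \tau$ then $p \in \sigma \cap \tau$, which forces $\sigma \cap \tau \neq \varnothing$, so such a pair cannot belong to $F$.

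Finally, since $M$ is compact, so are $\Delta(M)$ and $F$, and they are disjoint closed subsets of the metric space $(M \times M, \mathrm{dist}_g \oplus \mathrm{dist}_g)$. Therefore their distance $\delta := \mathrm{dist}(\Delta(M), F)$ is strictly positive. Choosing any $\epsilon < \delta$ (and smaller than the injectivity radius, as required) yields $N_\epsilon \cap F = \varnothing$, hence $N_\epsilon \subset U$, which is precisely the statement. I do not anticipate any substantive obstacle: the only subtlety is the standard use of compactness to turn a pointwise disjointness into a uniform positive separation.
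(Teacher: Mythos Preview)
Your argument is correct and is precisely the routine compactness argument the paper has in mind; the paper itself gives no proof beyond the remark that the statement ``is straightforward to check.'' One tiny cosmetic point: since $N_\epsilon$ is defined via $\mathrm{dist}_g(p,p')$ rather than the product metric, it is marginally cleaner to observe that the continuous function $(p,p')\mapsto \mathrm{dist}_g(p,p')$ is strictly positive on the compact set $F$ and hence bounded below by some $\delta>0$, giving $N_\epsilon\cap F=\varnothing$ for $\epsilon<\delta$ directly; but your version via the product-metric separation is equivalent up to a harmless constant.
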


The choice of auxiliary metric $g$ and parameter $\epsilon$ defines a geodesic `connecting' map
\[ \varpi_{g,\epsilon} \colon N_\epsilon \to PM \]
to the path space of $M$, by taking a pair of points that are  at most $\epsilon$-close to the unique geodesic of length $\le \epsilon$ connecting them. This construction is used to create intersections geometrically to define the loop product and coproduct in \cite[Section~2]{naef2023string}. We will use a modification of this map that will be compatible with our combinatorial models. The following statement follows from the piecewise-linear approximation of smooth paths and the regularity of the neighborhood of submanifolds, in this case, $M \subset M\times M$.
\begin{proposition}
    There is a homotopy between the map $\varpi_{g,\epsilon}$ and a map
    \[ \varpi_\mathrm{pl} \colon N_\epsilon \to PM \]
    such that, for any $(p,p') \in N_\epsilon$, the path $\varpi_\mathrm{pl}(x,y)$ is \emph{piecewise linear} for the cubical coordinates and contained in $\sigma \cup \sigma'$, where $\sigma$ and $\sigma'$ are any two simplices of $K$ respectively containing $p$ and $p'$ with $\sigma \cap \sigma' \neq \varnothing$.
\end{proposition}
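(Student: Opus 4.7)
The plan is to build $\varpi_\mathrm{pl}$ by local construction over the cover of $N_\epsilon$ provided by Proposition~\ref{prop:epsilonSimplices}, and then produce the homotopy to $\varpi_{g,\epsilon}$ by a straight-line (in cubical coordinates) interpolation, using the fact that the ambient pair of simplices is contractible. Concretely, for each ordered pair $(\sigma,\sigma')$ of simplices of $K$ with $\sigma\cap\sigma'\neq\varnothing$, I would first define a map
\[
\varpi_{\sigma,\sigma'} \colon (\sigma\times\sigma')\cap N_\epsilon \to P(\sigma\cup\sigma')
\]
sending $(p,p')$ to the concatenation of: the straight segment inside $\sigma$ (in the cubical coordinates of \eqref{eq:cubeToSimplex}) from $p$ to a fixed point $x_{\sigma,\sigma'}$ in the nonempty intersection $\sigma\cap\sigma'$, followed by the analogous straight segment inside $\sigma'$ from $x_{\sigma,\sigma'}$ to $p'$. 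By construction each such path is piecewise linear in the cubical coordinates and contained in $\sigma\cup\sigma'$.

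To assemble these local maps into a single continuous $\varpi_\mathrm{pl}$ on $N_\epsilon$, I would choose once and for all a partition of unity $\{\psi_{\sigma,\sigma'}\}$ on $N_\epsilon$ subordinate to the open cover obtained by slightly thickening the closed pieces $(\sigma\times\sigma')\cap N_\epsilon$; then set
\[
\varpi_\mathrm{pl}(p,p') = \sum_{(\sigma,\sigma')} \psi_{\sigma,\sigma'}(p,p')\, \varpi_{\sigma,\sigma'}(p,p'),
\]
where the convex combination is taken pointwise using the cubical coordinates on the common simplex $\sigma\cup\sigma'$ containing the relevant basepoints (and reparametrizing the Moore parameter linearly so the concatenations occur at consistent times). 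Because $\sigma\cup\sigma'$ is contractible and convex in the cubical coordinates of any single simplex it meets, the resulting path is still piecewise linear in the cubical coordinates and still lies in $\sigma\cup\sigma'$ for whichever pair of simplices contain $p,p'$.

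For the homotopy to $\varpi_{g,\epsilon}$, I would use that for every pair $(p,p')\in N_\epsilon$ the images $\varpi_{g,\epsilon}(p,p')$ and $\varpi_\mathrm{pl}(p,p')$ both lie in the contractible union $\sigma\cup\sigma'$ (for $\epsilon$ small enough, by Proposition~\ref{prop:epsilonSimplices} and the fact that short geodesics stay within the star of the minimal containing simplices). One can then straight-line homotope them inside that union, again using cubical coordinates on each simplex and the same partition of unity to guarantee continuity in the parameter $(p,p')$; alternatively one invokes the standard homotopy extension property for the pair $(PM, \mathrm{(PL\ paths)})$ applied fiberwise over the regular neighborhood $M\subset M\times M$.

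The main obstacle is the gluing step: near a pair $(p,p')$ that lies in several $\sigma\times\sigma'$ simultaneously (for instance when $p$ or $p'$ lies on a lower-dimensional face), the local prescriptions $\varpi_{\sigma,\sigma'}$ must combine into a well-defined continuous path that still has support in the relevant simplicial union. This is precisely where the 2-fineness hypothesis on $K$ enters: it ensures that every such overlap is itself contained in a contractible subcomplex where the convex-combination argument above is valid, so that the partition-of-unity construction lands in the right target and the straight-line homotopy to $\varpi_{g,\epsilon}$ is well-defined throughout. Once continuity and support are secured, piecewise linearity in the cubical coordinates is automatic from the construction.
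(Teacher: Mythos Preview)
The paper's own argument is a one-line appeal to ``piecewise-linear approximation of smooth paths and the regularity of the neighborhood of submanifolds,'' so you are attempting something considerably more explicit. The overall shape --- build $\varpi_\mathrm{pl}$ locally on the pieces $(\sigma\times\sigma')\cap N_\epsilon$ and then patch --- is reasonable, but the patching step as written has a genuine gap.

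The problem is the partition-of-unity formula. You write the global map as a convex combination $\sum \psi_{\sigma,\sigma'}\,\varpi_{\sigma,\sigma'}$ ``taken pointwise using the cubical coordinates on the common simplex $\sigma\cup\sigma'$.'' But $\sigma\cup\sigma'$ is not a simplex, and there is no single affine or cubical coordinate system on it in which a convex combination of paths makes sense; the coordinates of \eqref{eq:cubeToSimplex} are defined simplex by simplex and do not extend compatibly across a shared face. Worse, the various $\varpi_{\sigma,\sigma'}(p,p')$ being averaged for a fixed $(p,p')$ live in \emph{different} unions $\sigma\cup\sigma'$, and a pointwise average of paths lying in distinct, non-convex subsets of $M$ need not land in any of them --- nor, therefore, in the intersection of all such unions, which is what the statement requires. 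The $2$-fineness hypothesis gives only homotopy-theoretic contractibility of the relevant subcomplex, not an affine structure, so invoking it does not rescue the convex-combination step. The same objection applies to your straight-line homotopy to $\varpi_{g,\epsilon}$: the geodesic and the PL path may traverse different simplices, and there is no ambient chart in which a linear interpolation between them is defined.

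What would work is the alternative you mention in passing: build $\varpi_\mathrm{pl}$ (and the homotopy) inductively over the closed cover $\{\sigma\times\sigma'\}$ ordered by inclusion, at each stage extending from the already-defined boundary using that the space of paths in $\sigma\cup\sigma'$ with prescribed endpoints is contractible. That is an obstruction-theoretic / homotopy-extension argument rather than a partition-of-unity one, and it is essentially what the paper's terse justification is gesturing at.
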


We can thus replace the geodesic connecting map used in the definitions of the loop product and coproduct by the corresponding maps defined by $\varpi_\mathrm{pl}$ instead, obtaining chain-equivalent operations. Given any cochain
\[ \tau \in S^n(M \times M,( M \times M) - M) \simeq S^n(TM, TM-M) \]
representing the Thom class of $M$; we obtain a chain level representative for the Chas--Sullivan product of degree $-n$
\[ \mu_\mathrm{CS} \colon S_*(LM) \otimes S_*(LM) \to S_*(LM), \]
and the Goresky--Hingston coproduct of degree $(1-n)$
\[ \lambda_\mathrm{GH} \colon \frac{S_*(LM)}{S_*(M)} \to \frac{S_*(LM) \otimes S_*(LM)}{S_*(M) \otimes S_*(LM) + S_*(LM) \otimes S_*(M)}. \]

Let us describe in more detail how our versions of these operations are defined, appropriately modifying \cite[Definitions~2.1,~2.2]{naef2023string}. For the Chas--Sullivan product, one considers pullback squares
\[\xymatrix{
    \mathrm{Fig}(8) \ar@{^{(}->}[r] \ar[d] & V_\epsilon \ar[d] \ar@{^{(}->}[r] & LM \times LM \ar[d]^{\mathrm{ev}_0 \times \mathrm{ev}_0} \\
    M \ar@{^{(}->}[r] & N_\epsilon \ar@{^{(}->}[r] & M \times M
}\]
Note that the locus over the diagonal $M$ consists of pairs of loops with the same base-point, hence the name `figure eight'. We write
\[ \theta_\mathrm{CS} = (\mathrm{ev}_0 \times \mathrm{ev}_0)^*\tau \in S^n(LM \times LM, (LM \times LM) -\mathrm{Fig}(8))\]
for the pullback of the Thom cochain. We have a natural map
\[ S_*(V_\epsilon, V_\epsilon -\mathrm{Fig}(8)) \to S_*(LM \times LM, (LM \times LM) - \mathrm{Fig}(8)), \]
which is a chain equivalence by excision. By standard arguments, a chain homotopy inverse
\[ e \colon S_*(LM \times LM, (LM \times LM) -\mathrm{Fig}(8)) \to S_*(V_\epsilon, V_\epsilon - \mathrm{Fig}(8))\]
can be defined by using barycentric subdivision. By picking such an inverse, we guarantee that if $c$ is a chain consisting of pairs of piecewise-linear loops supported in a sequence of simplices, that is, supported in a product of the cells of \cref{def:piecewiseLinearCells}, then so is $e(c)$. We use the piecewise-linear connecting map $\varpi_\mathrm{pl}$ to define a joining map
\[ \mathrm{join} \colon V_\epsilon \to LM, \]
which also has the property of preserving piecewise linearity. The chain-level Chas--Sullivan product is then the map $\mu_{CS}$ of degree $-n$ given by the composition
\begin{align*}
    S_*(LM) \otimes S_*(LM) &\to S_*(LM \times LM) \to S_*(LM \times LM, (LM \times LM) -\mathrm{Fig}(8)) \to \\
    &\xrightarrow{e} S_*(V_\epsilon, V_\epsilon - \mathrm{Fig}(8)) \xrightarrow{ (-1)^n \theta_\mathrm{CS} \cap} S_*(V_\epsilon) \xrightarrow{S_*\mathrm{join}} S_*(LM),
\end{align*}
noting that we twisted by a sign $(-1)^n$ when capping with the Thom cochain.

As for the Goresky--Hingston coproduct, consider the space
\[ E \coloneqq \{((\gamma, T), t) \in LM \times \R \ |\ 0 \le t \le T\}\subset LM \times \R \]
and the map \[\mathrm{ev} \colon E \to M \times M\] defined by
\[ \mathrm{ev}(((\gamma, T), t))= (\gamma(0), \gamma(t)).\]
We then construct the pullbacks
\[\xymatrix{
    \mathrm{Fig}(8)' \ar@{^{(}->}[r] \ar[d] & W_\epsilon \ar[d] \ar@{^{(}->}[r] & E \ar[d]^{\mathrm{ev}} \\
    M \ar@{^{(}->}[r] & N_\epsilon \ar@{^{(}->}[r] & M \times M
}\]
and let $\theta_{\mathrm{GH}}=\mathrm{ev}^*\tau$. We have a degree $+1$ map
\[ m \colon S_*(LM) \to S_{*}(E) \]
that can be explicitly constructed through a simplicial subdivision of the prisms $\triangle^k\times \triangle^1$. This is not a chain map, but rather has boundary contained in the locus
\[ F \coloneqq \{((\gamma, T), t)\in E \ |\ t=0,T\} \subset E.\]
Using again the piecewise-linear connecting map $\varpi_\mathrm{pl}$ we define a cutting map
\[ \mathrm{cut} \colon W_\epsilon \to LM \times LM, \]
 which also preserves piecewise linearity. The Goresky--Hingston coproduct is then the map $\lambda_{GH}$ of degree $1-n$ given by the composition of chain maps
\begin{align*}
    S_*(LM,M)  &\xrightarrow{m} S_*(E,F) \to S_*(E,F \cup (E-\mathrm{Fig}(8)')) \xrightarrow{e'} S_*(W_\epsilon,  F \cup (W_\epsilon-\mathrm{Fig}(8)')) \to \\
    & \xrightarrow{(-1)^n\theta_\mathrm{GH} \cap} S_*(W_\epsilon,  F) \xrightarrow{S_*\mathrm{cut}} S_*(LM \times LM, (M \times LM) \cup (LM \times M)),
\end{align*}
where again $e'$ is an excision map defined using barycentric subdivision. Note that, by construction, the maps $\mu_\mathrm{CS}$ and $ \lambda_\mathrm{GH}$ preserve piecewise linearity.

\begin{remark}
    In \cite[Appendix~B]{HW}, there is a discussion about signs for the intersection product. We here take the perspective that the operations that those authors denote $(x,y) \mapsto x \bullet_\mathbf{Th} y$ and $(x,y) \mapsto x \bullet_\mathbf{P} y$ should be seen as morphisms from different source complexes. Namely, the `Thom product' does come from a degree $-n$ chain map
    \[ (S_*(M) \otimes S_*(M), \delta \otimes 1 + 1 \otimes \delta) \to (S_*(M), \delta), \]
    where the source has the usual differential on the tensor product, while the `Poincar\'e product' does not necessarily. Instead, it should be thought of as the map on homology induced by a degree $-n$ chain map
    \[ (S_*(M) \otimes S_*(M), \delta \otimes 1 + (-1)^x 1 \otimes \delta ) \to (S_*(M),\delta), \]
    where $x$ denotes the first input. It seems better for us to only deal with one type of binary operation, writing instead
    \[ m_{\mathbf{Th}}(x,y) \coloneqq x \bullet_\mathbf{Th} y \quad \text{and} \quad m_{\mathbf{P}}(x,y) \coloneqq  (-1)^{np} x \bullet_\mathbf{P} y \]
    and including the degree of the symbols $m_{\dots}$ whenever signs are created by the Koszul rule. In that notation, the relation between these products becomes simply
    \[ m_{\mathbf{Th}}(x,y) = (-1)^n m_{\mathbf{P}}(x,y). \]
    Therefore, by including the factor $(-1)^n$ in our geometric definitions of $\mu_\mathrm{CS}, \lambda_\mathrm{GH}$, we get the Poincar\'e versions of the Chas--Sullivan product and Goresky--Hingston coproduct, respectively denoted $\wedge$ and $\vee$ in \textit{op.cit.}, but seen as chain maps for the usual differential on the tensor product.
\end{remark}

We let $K$ be a $1$-fine simplicial complex obtained from a triangulation of $M$ and denote by
\[ \theta \colon C_*(K) \otimes C_*(K) \to R \]
the pairing of degree $-n$ on the given by mapping to singular chains on $M$ and evaluating against the Thom cochain $\tau$; this is a nondegenerate local pairing, in the sense of \cref{def:thetaNondeg}. Picking any coherent ordering defines a simplicial set $X$, and the pairing then gives rise to a local homotopy pairing $\alpha$ on $\mathcal{C}(\widetilde{X})$, canonically defined up to an exact term, by \cref{thm:alphaExistence,prop:alphaUnique}. This defines the associated algebraic loop product $\mu_\alpha$ and algebraic loop coproduct $\lambda_\alpha$.

\subsubsection{Comparison of products}
We now compare the algebraic loop product $\mu_{\alpha}$ on $\coCH_*(\mathcal{C}(\widetilde{X}))$ with the Chas--Sullivan product on $S_*(LM)$, using the quasi-isomorphism
\[ q \coloneqq S_*(Lf_X) \circ \ell_{\widetilde{X}} \colon \coCH_*(\mathcal{C}(\widetilde{X}))\to S_*(L|X|) \cong S_*(LM).\]

\begin{theorem}\label{thm:productComparison}
    The following square of complexes commutes up to chain homotopy
    \[\xymatrix{
        \coCH_*(\calC(\widetilde{X})) \otimes \coCH_*(\calC(\widetilde{X})) \ar[r]^-{\mu_\alpha} \ar[d]_{q \otimes q} &  \coCH_*(\calC(\widetilde{X})) \ar[d]^{q} \\
        S_*(LM) \otimes S_*(LM) \ar[r]^-{\mu_{\mathrm{CS}}} & S_*(LM).
    }\]
\end{theorem}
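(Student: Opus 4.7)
The strategy is to apply the method of acyclic models, paralleling the inductive construction in the proof of \cref{prop:mLocalEquivalences}. The essential input is that, thanks to the locality of $\alpha$ and the piecewise-linear choices in \cref{sec:recap}, both $q\circ\mu_\alpha$ and $\mu_{\mathrm{CS}}\circ(q\otimes q)$ send a pair of generators to chains supported within a contractible subspace of piecewise-linear loops, inside which a chain homotopy can be built inductively.

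First I would analyze the supports. Fix generators $x_1 = \sigma_0\{\sigma_1|\cdots|\sigma_N\}$ and $x_2 = \tau_0\{\tau_1|\cdots|\tau_M\}$ of $\coCH_*(\calC(\widetilde{X}))$. On the algebraic side, $\mu_\alpha$ produces a sum of closed necklaces whose beads lie in $\{\sigma_i,\tau_j\}$ together with beads coming from $\alpha(\sigma_0^{(1)},\tau_0)$; by condition (2) of \cref{thm:alphaExistence} the latter have support inside $\sigma_0\cup\tau_0$. After applying $q$, we therefore obtain a chain of piecewise-linear loops which lies inside a subspace of the form $S_{\rho_0,\ldots,\rho_L}$ of \cref{def:piecewiseLinearCells}, where the $\rho_i$ form a cyclic sequence of simplices drawn from $\sigma_0$, $\tau_0$, and the remaining beads of both necklaces; this subspace is contractible by construction. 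On the geometric side, $\mu_{\mathrm{CS}}(q(x_1)\otimes q(x_2))$ vanishes when $\sigma_0\cap\tau_0=\varnothing$, since for $\epsilon$ small enough the figure-eight locus sits over $N_\epsilon$, which by \cref{prop:epsilonSimplices} projects to pairs of intersecting simplices; when $\sigma_0\cap\tau_0\neq\varnothing$, the map $\varpi_{\mathrm{pl}}$ joins basepoints by piecewise-linear paths inside $\sigma_0\cup\tau_0$, so the output again lies in the same contractible subspace $S_{\rho_0,\ldots,\rho_L}$.

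Next I would build the chain homotopy $h$ of degree $1-n$ inductively on total input degree. The base case is trivial: for $n>0$ both maps vanish on $(v\cdot\id_v)\otimes(w\cdot\id_w)$ by dimension count. For the inductive step, assuming $h$ is defined on inputs of total degree less than $k-n$ and preserves support, the element
\[
E(x_1\otimes x_2) \coloneqq \bigl(q\circ\mu_\alpha - \mu_{\mathrm{CS}}\circ(q\otimes q)\bigr)(x_1\otimes x_2) - h\bigl((\partial\otimes 1 + 1\otimes\partial)(x_1\otimes x_2)\bigr)
\]
is a cycle in $S_k(LM)$ with support inside the contractible subspace $S_{\rho_0,\ldots,\rho_L}$ determined by $x_1\otimes x_2$. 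Hence $E(x_1\otimes x_2)$ bounds a chain in the same locus, which we select as $h(x_1\otimes x_2)$. Organizing these choices coherently with respect to face and degeneracy maps (so as to keep supports nested) is handled as in the proof of \cref{prop:mLocalEquivalences}.

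The main technical step is verifying that $\mu_{\mathrm{CS}}\circ(q\otimes q)$ preserves the locality described above. This requires selecting the excision homotopy inverse $e$ in the construction of $\mu_{\mathrm{CS}}$ (see \cref{sec:recap}) so that it respects the support stratification -- possible because barycentric subdivision does not enlarge supports -- and confirming that both $\varpi_{\mathrm{pl}}$ and the joining operation remain within the simplices determined by their inputs. This is precisely where the piecewise-linear replacement of the geodesic connecting map becomes essential: a geodesic would in general escape the union $\sigma_0\cup\tau_0$ and break the support bookkeeping, making it impossible to carry out the inductive construction of $h$ inside contractible loci.
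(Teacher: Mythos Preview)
Your overall architecture matches the paper's: an induction on total input degree, using that both $q\circ\mu_\alpha$ and $\mu_{\mathrm{CS}}\circ(q\otimes q)$ land in a contractible cell of piecewise-linear loops, so the obstruction cycle $E$ can be killed there. The supporting discussion about locality of $\alpha$, the role of $\varpi_{\mathrm{pl}}$, and the excision inverse $e$ is also right and in line with what the paper uses.

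The genuine gap is your base case. You anchor the induction at input degree $0$, but nothing of substance happens until input degree $n$, where the output lives in degree $0$. At that step your obstruction $E(x_1\otimes x_2)$ is a \emph{degree-zero} cycle in a contractible cell, and contractibility alone does \emph{not} force it to bound: you must check that its class in $H_0\cong R$ vanishes. This is exactly where the link between $\alpha$ and the Thom cochain enters, and your proposal never invokes it. The paper splits this base case into two: (i) if $\dim\sigma_0+\dim\tau_0<n$, then $\mu_\alpha$ vanishes for degree reasons, while $\mu_{\mathrm{CS}}$ vanishes because the image under $\mathrm{ev}_0\times\mathrm{ev}_0$ is a degenerate $n$-chain and the Thom cochain $\tau$ has been chosen to vanish on degenerate chains (an assumption you omit); (ii) if $\dim\sigma_0+\dim\tau_0=n$, both outputs are $0$-chains in the cell with total multiplicity $\theta(\sigma_0,\tau_0)$, using condition~(3) of \cref{thm:alphaExistence} for the algebraic side and the definition of $\theta$ as the pullback of $\tau$ for the geometric side, so they represent the same $H_0$-class. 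Without this computation your inductive step cannot get off the ground at degree $n$; once you add it, the rest of your argument goes through exactly as you wrote.
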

\begin{proof}
    We will start by assuming that our choice of Thom cochain $\tau$ vanishes on degenerate singular chains, that is, chains that factor through simplices of lower dimension; this can be always guaranteed by shifting by an exact term. We then construct the desired homotopy $h$ inductively on the total degree of the inputs, starting with the base case in the first nontrivial degree (when the sum of the degrees of the two inputs is $n$.) Let
    \[ x \otimes y = \sigma_0\{\sigma_1|\dots|\sigma_r \} \otimes \tau_0\{\tau_1|\dots|\tau_s \} \in \coCH_*(\calC(\widetilde{X})) \otimes \coCH_*(\calC(\widetilde{X}))  \]
    be a generator. There are two mutually exclusive possibilities:
    \begin{enumerate}
        \item $\dim(\sigma_0) + \dim(\tau_0) < n$, in which case the image of this input under $\mu_\alpha$ vanishes for degree reasons, and its image under $q\otimes q$ is degenerate so $\mu_\mathrm{CS}$ vanishes on it by assumption. In this case both sides are zero and we need no homotopies.
        \item $\dim(\sigma_0) + \dim(\tau_0) = n$, in which case the image of this input under both maps $q \circ \mu_\alpha$ and $\mu_\mathrm{CS} \circ (q \otimes q)$ is a linear combination of piecewise-linear loops with total multiplicity $\theta(\sigma_0,\tau_0)$, the former by the characterization of $\alpha$ in \cref{thm:alphaExistence}, together with the computation of the Koszul sign in \cref{def:algebraicLoopProduct}, and the latter by construction of the Chas--Sullivan product. So they both represent the same class in the zeroth homology group of the cell in $LM$ specified in \cref{def:piecewiseLinearCells} by the sequence of simplices
        \[ \sigma_0, \sigma_1, \dots, \sigma_r, \sigma_0,\tau_0, \tau_1,\dots,\tau_s,\tau_0 \]
        and therefore we can pick $h(x,y)$ to be given by any null-homology of their difference that is contained in that same cell.
    \end{enumerate}
    We can then proceed degree by degree inductively, since at each step the equation to be satisfied by $h(x,y)$ is of the form $d h(x,y) = z$ where $z$ is a closed element of strictly positive degree and with support contained inside one of the contractible cells we defined in $LM$, so we can pick any primitive of that chain contained inside the same cell.
\end{proof}

Taking homology, we immediately obtain the following.
\begin{corollary}\label{cor:loopproduct}
    The homology loop product induced by $\mu_\alpha$ is equivalent to the Chas--Sullivan product on $LM$ under the isomorphism $\mathcal{coHH}_*(\calC(\widetilde{X}))\cong H_*(LM)$.
\end{corollary}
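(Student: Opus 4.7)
The plan is to deduce this corollary directly from \cref{thm:productComparison}, by applying the homology functor to the chain-level homotopy commutative diagram. Because the statement is a consequence of a theorem that has already been proved in the text, the proof reduces to verifying that the structural ingredients behave correctly under passage to homology.

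First I would recall that $q = S_*(Lf_X) \circ \ell_{\widetilde{X}}$ is a quasi-isomorphism by the combination of \cref{thm:cohochmodel} and \cref{prop:homotopyEquivalence}: the map $\ell_{\widetilde{X}}$ is a quasi-isomorphism since the homotopy category of $\widetilde{X}$ is a groupoid, and $S_*(Lf_X)$ is induced by the homotopy equivalence $f_X$ of \cref{prop:deformationRetract}. Consequently $q \otimes q$ is also a quasi-isomorphism (since we are working with flat $R$-modules throughout and the K\"unneth map identifies the tensor product on homology).

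Next, by \cref{thm:productComparison}, there exists a chain homotopy $h$ witnessing the equality
\[ q \circ \mu_\alpha \simeq \mu_{\mathrm{CS}} \circ (q \otimes q). \]
Applying $H_*(-)$ to both sides kills the homotopy $h$, so on homology the square commutes strictly:
\[ H_*(q) \circ H_*(\mu_\alpha) = H_*(\mu_{\mathrm{CS}}) \circ (H_*(q) \otimes H_*(q)). \]
Since $H_*(q)$ is an isomorphism, this identity transports the homology operation $H_*(\mu_\alpha)$ on $\mathcal{coHH}_*(\calC(\widetilde{X}))$ to the Chas--Sullivan product $H_*(\mu_{\mathrm{CS}})$ on $H_*(LM)$, which is the claim.

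There is no serious obstacle here; the only point that requires a moment of care is the identification of the homology of $\coCH_*(\mathcal{C}(\widetilde{X})) \otimes \coCH_*(\mathcal{C}(\widetilde{X}))$ with $H_*(LM) \otimes H_*(LM)$, but this is automatic given our flatness and finiteness conventions and the K\"unneth formula for tensor products of chain complexes. Thus the corollary follows by a purely formal manipulation from \cref{thm:productComparison}.
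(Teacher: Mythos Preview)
Your proposal is correct and follows exactly the approach of the paper, which simply states that the corollary is obtained by ``taking homology'' of \cref{thm:productComparison}. You have spelled out in slightly more detail why $q$ is a quasi-isomorphism and how the homotopy disappears on homology, but the underlying argument is identical.
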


\subsubsection{Comparison of coproducts}
We keep the same $(M, K,X, \theta,\alpha)$ as above, and let us assume that $X$ is $3$-fine. Throughout this subsection, for simplicity of notation, we write $\mathcal{C}=\mathcal{C}(\widetilde{X})$. We shall compare the algebraic coproduct $\lambda_{\alpha}$ on an appropriate quotient of $\coCH_*(\mathcal{C})$ with the Goresky--Hingston coproduct on $S_*(LM,M)$.

Recall that in our model for the free loop space in terms of the coHochschild complex we proved that we can replace constant loops by any small enough thickening, in this case up to $3$-local chains. We now describe a similar thickening of the constant loops inside singular chains in $LM$, which will be convenient for writing the comparison of the coproducts.
Let us denote by
\[ S_*(LM)_{3\mathrm{-loc}} = \sum_{Z} S_*(L|Z|) \subset S_*(LM), \]
where $Z$ ranges over all closed simplicial subcomplexes of $K$ with diameter $\le 3$. This subcomplex contains $q(\coCH_*(\calC)_{3\mathrm{-loc}})$ since $q$ is support-preserving.
\begin{proposition}
    The inclusion $q(\coCH_*(\calC)_{3\mathrm{-loc}}) \subset S_*(LM)_{3\mathrm{-loc}}$ is a quasi-isomorphism, and this latter is preserved by the chain-level Goresky--Hingston coproduct, in the sense that
    \[ \lambda_\mathrm{GH}\left(S_*(LM)_{3\mathrm{-loc}} \right) \subset S_*(LM)_{3\mathrm{-loc}} \otimes S_*(LM)_{3\mathrm{-loc}}. \]
\end{proposition}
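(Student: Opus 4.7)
The plan is to handle the two assertions of the proposition separately, using distinct techniques.

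For the quasi-isomorphism, the strategy is a two-out-of-three argument exhibiting both chain complexes as quasi-isomorphic to $C_*(X)$ (equivalently $S_*(M)$), viewed as constant loops. On the algebraic side, the chain homotopy equivalence $\Imm(\iota_X) \hookrightarrow \coCH_*(\calC)_{3\mathrm{-loc}}$ from \cref{prop:mLocalEquivalences}, combined with the fact that $q \circ \iota_X$ is chain homotopic to the constant-loop inclusion $S_*(\rho_{|X|}) \circ i$ by \cref{prop:constantLoops}, yields that $C_*(X) \xrightarrow{q \circ \iota_X} q(\coCH_*(\calC)_{3\mathrm{-loc}})$ is a quasi-isomorphism. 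On the geometric side, one independently argues that $C_*(X) \to S_*(LM)_{3\mathrm{-loc}}$ via constant loops is a quasi-isomorphism, by combining an inductive construction of a null-homotopy (using the contractibility of $|Z_Z|$ afforded by $3$-fineness) with iterated barycentric subdivision of singular chains to force each piece of the homotopy to remain $3$-local. The inclusion then inherits the quasi-isomorphism property from the commutativity up to homotopy of the resulting diagram.

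For the preservation claim, let $c$ be a singular chain with image contained in $L|Z|$ for some closed simplicial subcomplex $Z \subset K$ with $\diam(Z) \le 3$. I would trace the image of $c$ through each constituent map in the definition of $\lambda_\mathrm{GH}$ from \cref{sec:recap}: the prism subdivision $m$ and the barycentric excision $e'$ both preserve image supports, as does capping with the Thom cochain $\theta_\mathrm{GH}$. The essential step is the cutting map $\mathrm{cut}$, which closes each half of a cut marked loop $(\gamma,T,t)$ using the piecewise-linear connecting path $\varpi_\mathrm{pl}(\gamma(0),\gamma(t))$. Since $\gamma(0),\gamma(t) \in |Z|$ and $Z$ is closed under taking faces, one may choose simplices $\sigma,\sigma' \in Z$ with $\gamma(0) \in \sigma$, $\gamma(t) \in \sigma'$, and $\sigma \cap \sigma' \neq \varnothing$, so that $\varpi_\mathrm{pl}(\gamma(0),\gamma(t)) \subset \sigma \cup \sigma' \subset |Z|$. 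Consequently both output loops remain in $L|Z|$, giving $\lambda_\mathrm{GH}(c) \in S_*(L|Z|) \otimes S_*(L|Z|) \subset S_*(LM)_{3\mathrm{-loc}} \otimes S_*(LM)_{3\mathrm{-loc}}$.

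The main obstacle is proving that $C_*(X) \to S_*(LM)_{3\mathrm{-loc}}$ is a quasi-isomorphism. The challenge is that a natural contraction of a loop in $L|Z|$ down to its basepoint proceeds through $L|Z_Z|$, which need not be $3$-local since the diameter of $Z_Z$ is a priori unbounded in the definition of $m$-fineness. Overcoming this requires carefully combining the inductive procedure of \cref{prop:mLocalEquivalences} with a barycentric subdivision at the level of singular chains, ensuring that after sufficient subdivision each singular simplex appearing in the constructed chain homotopy has image contained in some $L|Z'|$ with $\diam(Z') \le 3$. The preservation claim in part two, by contrast, reduces to a bookkeeping exercise once one verifies that $\varpi_\mathrm{pl}$ stays within $|Z|$.
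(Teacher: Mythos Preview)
Your treatment of the second assertion (preservation under $\lambda_{\mathrm{GH}}$) is essentially what the paper does: it says the claim ``follows directly from our definition of the Goresky--Hingston coproduct,'' and your trace through $m$, $e'$, the Thom cap, and $\mathrm{cut}$ is an appropriate unpacking of that sentence. One small point: your claim that one can always choose $\sigma,\sigma'\in Z$ with $\sigma\cap\sigma'\neq\varnothing$ is not automatic from $\gamma(0),\gamma(t)\in|Z|$ alone; you are implicitly using that the \emph{smallest} simplices containing $\gamma(0),\gamma(t)$ already intersect, which requires the $\epsilon$ of \cref{prop:epsilonSimplices} and the stated property of $\varpi_{\mathrm{pl}}$. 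This is fine in the present setting but worth saying explicitly.

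For the first assertion your route differs from the paper's. The paper simply invokes ``the same argument as in the proof of \cref{prop:mLocalEquivalences},'' meaning one builds the retraction and the homotopies directly by induction over degree, using contractibility of the envelopes $Z_A$ at each step. You instead propose a two-out-of-three argument through $C_*(X)$, and you are right to flag a subtlety: the inductively constructed homotopy on $S_*(LM)_{3\text{-}\mathrm{loc}}$ naturally lands in $S_*(L|Z_Z|)$, and nothing in the definition of $m$-fineness bounds $\diam(Z_Z)$. However, your proposed remedy has a gap. Barycentric subdivision of a singular chain $\tau\colon\triangle^k\to L|Z_Z|$ only refines the \emph{parametrising} simplex; every loop in the image of each subdivided piece is still a loop that already appeared in the image of $\tau$, hence still lives only in $L|Z_Z|$. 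Subdivision alone therefore cannot force the pieces into some $L|Z'|$ with $\diam(Z')\le 3$. What does work is to first use the deformation retraction $L|Z_Z|\to|Z_Z|$ (available since $|Z_Z|$ is contractible) to replace the homotopy by a chain of \emph{constant} loops in $|Z_Z|\subset M$, and only then barycentrically subdivide inside $S_*(M)$ until each simplex has image in a single simplex of $K$; the subdivision chain homotopy applied to the original cycle stays in $S_*(L|Z|)$ and hence is $3$-local. With that extra step your argument goes through.
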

\begin{proof}
    The first statement follows from the same argument as we used in the proof of \cref{prop:mLocalEquivalences}. As for the second statement, it follows directly from our definition of the Goresky--Hingston coproduct.
\end{proof}

Together with \cref{prop:mLocalEquivalences}, this implies that the Goresky--Hingston coproduct factors through the quotient by this larger complex, in other words there is a bottom map completing the square
\[\xymatrix{
    \dfrac{S_*(LM)}{S_*(M)} \ar[d] \ar[rr] & &  \dfrac{S_*(LM) \otimes S_*(LM)}{S_*(M)\otimes S_*(LM) + S_*(LM)\otimes S_*(M)} \ar[d] \\
    \dfrac{S_*(LM)}{S_*(LM)_{3\mathrm{-loc}}} \ar[rr] & & \dfrac{S_*(LM) \otimes S_*(LM)}{S_*(LM)_{3\mathrm{-loc}}\otimes S_*(LM) + S_*(LM)\otimes S_*(LM)_{3\mathrm{-loc}}}
}\]
whose vertical maps are quasi-isomorphisms. Therefore we can use this bottom map, which we also call $\lambda_\mathrm{GH}$, when writing the comparison.
\begin{theorem}
    The following square of complexes commutes up to chain homotopy
    \[\xymatrix{
        \dfrac{\coCH_*(\calC)}{\coCH_*(\calC)_{3\mathrm{-loc}}} \ar[d] \ar[rr]^-{\lambda_\alpha} & &  \dfrac{ \coCH_*(\calC) \otimes \coCH_*(\calC)}{(\coCH_*(\calC))_{3\mathrm{-loc}}\otimes \coCH_*(\calC) + \coCH_*(\calC)\otimes (\coCH_*(\calC))_{3\mathrm{-loc}}} \ar[d] \\
        \dfrac{S_*(LM)}{S_*(LM)_{3\mathrm{-loc}}} \ar[rr]^-{\lambda_\mathrm{GH}} & & \dfrac{S_*(LM) \otimes S_*(LM)}{S_*(LM)_{3\mathrm{-loc}}\otimes S_*(LM) + S_*(LM)\otimes S_*(LM)_{3\mathrm{-loc}}}.
    }\]
\end{theorem}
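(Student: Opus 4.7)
The plan is to follow the same acyclic models strategy as in the proof of \cref{thm:productComparison}, but now working in the quotient by $3$-local chains, and using the scanning map $\cals$ on the algebraic side in place of concatenation. First I would shift the Thom cochain $\tau$ by an exact term so that it vanishes on degenerate singular chains; this ensures that for any input supported on a single simplex, the geometric composite $\lambda_{\mathrm{GH}} \circ q$ vanishes. The homotopy
\[ h \colon \frac{\coCH_*(\calC)}{\coCH_*(\calC)_{3\mathrm{-loc}}} \to \frac{S_*(LM) \otimes S_*(LM)}{S_*(LM)_{3\mathrm{-loc}}\otimes S_*(LM) + S_*(LM)\otimes S_*(LM)_{3\mathrm{-loc}}} \]
implementing $(q\otimes q)\circ \lambda_\alpha \simeq \lambda_{\mathrm{GH}} \circ q$ would be built degree by degree on generators $z = \sigma_0\{\sigma_1|\dots|\sigma_N\}$.

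For the base case, take $z$ of lowest total degree for which neither side is forced to vanish. The scanning map picks out some bead $\sigma_i$, and by the characterization of $\alpha$ in \cref{thm:alphaExistence}(3) the image $(q\otimes q)\circ \lambda_\alpha(z)$ is a linear combination of pairs of piecewise-linear loops with total coefficient $\pm\theta(\sigma_0,\sigma_i)$, supported in the product of cells $S_{\sigma_0^{((3))},\sigma_1,\dots,\sigma_{i-1}} \times S_{\sigma_0^{((1))},\sigma_{i+1},\dots,\sigma_N}$ of \cref{def:piecewiseLinearCells}. On the geometric side, the marking map $m$ places a distinguished point along $q(z)$; excision via $e'$ restricts to a neighborhood of the figure-eight locus $\mathrm{Fig}(8)'$; capping with $\theta_{\mathrm{GH}}=\mathrm{ev}^*\tau$ extracts the same multiplicity $\theta(\sigma_0,\sigma_i)$ since $\theta$ is by construction the pullback of $\tau$; and $\mathrm{cut}$ produces a pair of piecewise-linear loops contained in the same product of cells, by \cref{prop:epsilonSimplices} and the fact that $\varpi_\mathrm{pl}$ stays within the union of two intersecting simplices. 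The two outputs therefore represent the same $H_0$ class in that product of contractible cells, and $h(z)$ can be chosen as any null-homology of their difference supported there.

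The inductive step is then automatic: assuming $h$ has been constructed in lower degrees with the support property that $h(z)$ lies in the product of cells indexed by the beads of $z$, the obstruction $(q\otimes q)\circ\lambda_\alpha(z) - \lambda_{\mathrm{GH}}(q(z)) - (dh + hd)(z)$ is a closed chain of strictly positive degree whose support remains within that product of contractible cells (using locality of $\alpha$ from \cref{thm:alphaExistence}(2) on the algebraic side, and support preservation of $e'$, $\mathrm{cut}$, and $\varpi_\mathrm{pl}$ on the geometric side). Any primitive with support in the same cell can be chosen for $h(z)$. The $3$-fineness hypothesis enters to guarantee that after pushing through $f_X$ the relevant subcomplexes $Z_A$ exist and are contractible, and that each output of $\lambda_\alpha$ is either a $3$-local chain (hence zero in the quotient) or lies in a single contractible product cell indexed by the beads.

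The principal obstacle will be the sign and support bookkeeping in the base case, where two independent shifts of degree on each side (the degree $-n$ of $\alpha$ and $+1$ of $\cals$ versus the degree $-n$ of capping with $\theta_{\mathrm{GH}}$ and the degree $+1$ marking $m$) must line up, determining the overall sign convention; once matched, the induction is formal. A secondary point requiring care is verifying that the geometric coproduct descends to the quotient by $S_*(LM)_{3\mathrm{-loc}}$ in a way compatible with the algebraic quotient by $\coCH_*(\calC)_{3\mathrm{-loc}}$, which follows from the observation that $\mathrm{cut} \circ e' \circ m$ never increases the diameter of the supporting subcomplex beyond what $\cals$ and $\alpha$ jointly produce on the algebraic side.
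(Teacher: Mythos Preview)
Your proposal has a genuine gap: it treats the output of $\lambda_\alpha$ on a generator $z=\sigma_0\{\sigma_1|\dots|\sigma_N\}$ as living in a \emph{single} contractible product cell $S_{\dots}\times S_{\dots}$, but this is false in general. The scanning map sums over \emph{all} beads $\sigma_i$, and $\alpha(\sigma_0^{((2))},\sigma_i)$ is nonzero whenever $\sigma_0\cap\sigma_i\neq\varnothing$. If several non-adjacent beads $\sigma_i,\sigma_j,\dots$ touch $\sigma_0$, the resulting terms of $\lambda_\alpha(z)$ land in \emph{different} products of cells (cutting the necklace at $\sigma_i$ versus at $\sigma_j$ gives two incomparable pairs of loop-cells), and there is no single contractible cell containing them all. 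Your inductive hypothesis ``$h(z)$ lies in the product of cells indexed by the beads of $z$'' therefore cannot be maintained, and your base case analysis (``the scanning map picks out some bead $\sigma_i$'') already overlooks the situation $\dim\sigma_0=n-1$ with several $1$-simplex beads adjacent to $\sigma_0$.

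The paper resolves this by introducing extra bookkeeping that you are missing. One marks (``underlines'') each bead $\sigma_i$ with $\sigma_0\cap\sigma_i\neq\varnothing$, groups the underlines into maximal contiguous blocks, and for each such block $s$ (not the one containing $\sigma_0$) uses $3$-fineness to produce a contractible subcomplex $Z_s$ and contractible cells $S'_s,S''_s\subset LM$. Both $\lambda_\alpha(z)$ and $\lambda_{\mathrm{GH}}(q(z))$ then decompose, modulo $3$-local terms, as sums $\sum_s(\dots)$ with the $s$-th summand supported in $S'_s\times S''_s$. The induction hypothesis must be correspondingly strengthened: one demands $h(z)=\sum_s h_s(z)$ with each $h_s(z)$ supported in $S'_s\times S''_s$ and satisfying the homotopy relation cell-by-cell. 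Only with this refined support control does the contractibility argument go through at each inductive step.
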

\begin{proof}
    This proof will be similar to the proof of \cref{thm:productComparison}, but with the added complication that it becomes necessary to keep track of the supports of different terms in the outputs of the two maps, in order to make the contractibility part of the argument work. For each generator $x=  \sigma_0\{\sigma_1|\dots|\sigma_r \}$ of $\coCH_*(\calC)$, we underline factors whose support touches $\sigma_0$. That is, we underline $\sigma_0$ and all the other $\sigma_i$ such that
   $ \sigma_0 \cap \sigma_i \neq \varnothing$
    obtaining a symbol of the form
    \[ \underline{\sigma_0\left\{\sigma_1|\dots|\sigma_i \right.}|\sigma_{i+1}|\dots|\sigma_j|\underline{\sigma_{j+1}|\dots|\sigma_k}|\sigma_{k+1}|\dots|\sigma_l|\underline{\left.\sigma_{l+1}|\dots|\sigma_r \right\}}, \]
    for example. We look at groups of contiguous underlines, remembering that the lines `wrap around'; for example, the above element has \emph{two} contiguous underlines. Note that there is always one such underline containing $\sigma_0$.
    
    We first note that if \emph{all} the $\sigma_i$ are underlined, then all of these simplices are supported at distance one from $\sigma_0$ so there is a path of length $\le 3$ between any two vertices in the support. Thus this generator is in $\coCH_*(\calC)_{3\mathrm{-loc}}$ and maps to zero in the quotient. Likewise, if there is exactly one contiguous group, the one containing $\sigma_0$, then the output of both maps is zero in the quotient, since at least one of the loops in each tensor is $3$-local.

    To complete the proof, we must specify the value of the homotopy on inputs that have two or more contiguous underlines. We will do so inductively while using the data of these underlines to strengthen the inductive hypothesis, in the following way. Let us suppose that, besides the group containing $\sigma_0$, there are $N$ contiguous underlines, with $N$ groups of underlined simplices
    \[ \underline{\sigma_{i_s}, \sigma_{i_s+1},\dots,\sigma_{j_s - 1}, \sigma_{j_s}} \]
    where $s=0,\dots,N$ and $i_s,j_s$ are the appropriate pair of indices delimiting the contiguous underline. In each such group labeled by some $s$, since all simplices are adjacent to $\sigma_0$, their union has diameter $\le 3$ and therefore by assumption there is a contractible simplicial subcomplex $Z_s$ containing them.

    We use these contractible subcomplexes to define cells of piecewise-linear loops by taking unions of cells of the form defined by \cref{def:piecewiseLinearCells}. For each $s$ let us denote 
    \[ S'_s = \bigcup_{\tau \in Z_s} S(\sigma_0,\sigma_1,\dots,\sigma_{i_s-1},\tau), \qquad S''_s = \bigcup_{\tau \in Z_s} S(\sigma_0,\tau,\sigma_{j_s + 1},\dots,\sigma_r). \]
    These are contractible subsets of $LM$, since each $Z_s$ is contractible. Since the underlined simplices are exactly the ones on which $\alpha(\sigma_0,-)$ can evaluate non-trivially, it follows immediately from the definition of the algebraic loop coproduct that we then have a decomposition
    \[ \lambda_\alpha(x) = \sum_{s=1}^N \lambda_{\alpha,s}(x), \]
    modulo terms in $(\coCH_*(\calC))_{3\mathrm{-loc}}\otimes \coCH_*(\calC) + \coCH_*(\calC)\otimes (\coCH_*(\calC))_{3\mathrm{-loc}}$, where we can decompose each term as a sum of simple tensors of generators
    \[ \lambda_{\alpha,Z_s}(x) = \sum_{a \in I} y'_{s,a} \otimes y''_{s,a} \] 
    with $\Supp(y'_{s,a}) \subset S'_s$ and $\Supp(y''_{s,a}) \subset S''_s$. Note that we can ignore the terms that come from the contiguous underline containing $\sigma_0$, since they are automatically in \[(\coCH_*(\calC))_{3\mathrm{-loc}}\otimes \coCH_*(\calC) + \coCH_*(\calC)\otimes (\coCH_*(\calC))_{3\mathrm{-loc}}.\]

    The analogous decomposition exists for the Goresky--Hingston coproduct. To see this, using the notation we used in \cref{sec:recap}, we look at the singular chain
    \[ y = e' \circ m \circ q(\sigma_0\{\sigma_1|\dots|\sigma_r \}) 
    \in S_*(W_\epsilon,  F \cup (W_\epsilon-\mathrm{Fig}(8)')) \]
    and pick any representative $\hat{y} \in S_*(W_\epsilon)$. We note that if $(\gamma,t)$ is in the support of $\hat{y}$ and $t \neq 0,1$, then we know that the loop $\gamma$ traverses $\sigma_0,\sigma_1,\dots,\sigma_r$ in sequence, with origin in $\sigma_0$, and that the point $\gamma(t)$ is in the support of one of the underlined simplices, since it is $\epsilon$-close to the origin and we chose $\epsilon$ to satisfy \cref{prop:epsilonSimplices}. Thus, due to the manner in which we defined $\lambda_\mathrm{GH}$, using the piecewise-linear connecting map, and since the excision map $e'$ was defined by barycentric subdivision, we have a decomposition
    \[
        \lambda_{\mathrm{GH}} \circ q(x) = \sum_{s=1}^N \lambda_{\mathrm{GH},s}(x),
    \]
    modulo terms in $S_*(LM)_{3\mathrm{-loc}}\otimes S_*(LM) + S_*(LM)\otimes S_*(LM)_{3\mathrm{-loc}}$, where again we can decompose
    \[ \lambda_{\mathrm{GH},s}(\sigma_0\{\sigma_1|\dots|\sigma_r \}) = \sum_{a \in I} z'_{s,a} \otimes z''_{s,a} \] with $\Supp(z'_{s,a}) \subset S'_s$ and $\Supp(z''_{s,a}) \subset S''_s$. 
    We are now ready to run our induction with a strengthened statement. We will prove that there is a homotopy $h$ between $(q \otimes q) \circ \lambda_\alpha$ and $\lambda_{\mathrm{GH}} \circ q$ satisfying the following locality condition. For any generator $x$ as above, we have a similar decomposition for the homotopy \[h(x) = \sum_{s=1}^N h_s(x)\] with $h_s(x) = \sum_c h_{s,c}'(x) \otimes h_{s,c}''(x)$ where $\Supp(h_{s,c}'(x)) \subset S'_s, \quad \Supp(h_{s,c}''(x)) \subset S''_s$, whose terms satisfy
    \[ d(h_s(x)) + \sum_{s=1}^N h_s(dx)|_{S'_s \times S''_s} = \lambda_{\alpha,s}(x) - \lambda_{\mathrm{GH},s}. \]
    The notation $( -)|_{S'_s \times S''_s}$ indicates all the terms given by a tensor of factors with support in $S'_s,S''_s$, respectively.

    To say this in words, by construction of the cells $S_s$, each of the cells on which some term in $h_s(dx)$ has support sits inside of one of the cells $S_0,\dots,S_N$; there may be multiple of them for each $s$. The statement above means that the decomposition $h = \sum h_s$ separates $h$ into a homotopy in each one of those cells.

    Now that we formulated this strengthened induction hypothesis, we can run our argument, staring with an input with $x$ as above with degree $(n-1)$. There are two possibilities:
    \begin{enumerate}
        \item there exists exactly one $i$ among $1,\dots,r$ such that $\dim(\sigma_0) + \dim(\sigma_i) = n$, or
        \item $\dim(\sigma_0) = n-1$ and all the $\sigma_1,\dots,\sigma_r$ are 1-simplices.
    \end{enumerate}
    In case (1), both $\lambda_{\alpha}$ and $\lambda_{\mathrm{GH}}$ are nontrivial only when $\sigma_i \cap \sigma_0 \neq \varnothing$, in which case they both have output supported $S'_s \times S''_s$ for a single such $s$, the one corresponding to the contiguous underline containing $\sigma_i$. Moreover, calculating the sign in the algebraic coproduct we find that both outputs represent the same class
    \[ \widetilde{\theta}(\sigma_0,\sigma_i) \times [\text{loop in } S'_s] \times [\text{loop in } S''_s]  \]
    so by contractibility of the cells $S'_s$ and $S''_s$ we can find a homotopy also contained in their product. The argument for case (2) is similar, except that there may be multiple indices $s$ on whose corresponding cells the expressions for $\lambda_{\alpha}$ and $\lambda_{\mathrm{GH}}$, and consequently the homotopy, have support. The induction can then be run just as we have done in the proof of \cref{thm:productComparison}, using contractibility of the cells $S'_s,S''_s$ at each stage.
\end{proof}

Passing to homology we immediately obtain the following.
\begin{corollary} \label{cor:loopcoproduct}
    Under the isomorphism $H_*\left(\coCH_*(\calC)/(\coCH_*(\calC))_{3\mathrm{-loc}}\right) \cong H_*(LM,M)$, the algebraic loop coproduct corresponds to the Goresky--Hingston coproduct. \\
\end{corollary}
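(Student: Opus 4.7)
The plan is to pass to homology in the chain-homotopy-commutative square established in the preceding theorem and then identify its source and target with \(H_*(LM,M)\) and a relative homology of \((LM,M)^{\times 2}\), respectively. Since the homotopy commutativity at the chain level is exactly the content of that theorem, the corollary reduces to a comparison of long exact sequences.

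First I will take homology of the square. The chain-homotopy commutativity immediately yields a strictly commutative square of graded modules whose horizontal arrows are the homology maps induced by \(\lambda_\alpha\) and \(\lambda_{\mathrm{GH}}\), and whose vertical arrows are induced by \(q\) on the appropriate quotients. Next I will identify the source. Consider the short exact sequences
\[ 0 \to (\coCH_*(\calC))_{3\text{-loc}} \to \coCH_*(\calC) \to \coCH_*(\calC)/(\coCH_*(\calC))_{3\text{-loc}} \to 0 \]
and the analogous sequence for \(S_*(LM)\). The map \(q\) is compatible with the inclusions of the \(3\)-local subcomplexes since it preserves supports; by \cref{thm:cohochmodel} it is a quasi-isomorphism on the middle terms, and by \cref{prop:mLocalEquivalences} together with \cref{prop:constantLoops} (and the analogous statement established just before the theorem for \(S_*(LM)_{3\text{-loc}}\)) the induced map on the left terms is a quasi-isomorphism as well. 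The five-lemma applied to the resulting map of long exact sequences then identifies the cohomology of both left-hand columns with \(H_*(LM,M)\).

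The corresponding identification for the target is obtained in the same way, using the Künneth theorem (valid since \(R\) is a field) to recognize the quotient
\[ \frac{S_*(LM) \otimes S_*(LM)}{S_*(LM)_{3\text{-loc}}\otimes S_*(LM) + S_*(LM)\otimes S_*(LM)_{3\text{-loc}}} \]
as a model for \(H_*(LM,M) \otimes H_*(LM,M)\), and likewise on the algebraic side. Combining these identifications with the commutative homology square then yields the claim.

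The only real bookkeeping to check is that all the quasi-isomorphism identifications assemble coherently into a single commuting ladder, so that the vertical maps on homology really are the claimed isomorphisms and not merely abstract isomorphisms between the relevant groups. This is essentially automatic from naturality of the comparison map \(q\) and the fact that all constructions (the quotient, the Künneth map, and the inclusion of the locus of constant loops via \(\iota_X\)) are defined at the chain level. The substantive mathematical content was already absorbed in proving the preceding theorem; the corollary is a formal consequence.
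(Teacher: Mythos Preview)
Your argument is correct and matches the paper's own approach: the paper's entire proof is the single phrase ``Passing to homology we immediately obtain the following,'' and what you have written is a careful expansion of exactly that step, spelling out why the vertical maps in the square are quasi-isomorphisms via the five-lemma and the earlier equivalences. One small caveat: your invocation of the K\"unneth theorem to rewrite the target as $H_*(LM,M)\otimes H_*(LM,M)$ tacitly assumes $R$ is a field, which the corollary as stated does not require---the statement only asserts that the two induced maps agree under the vertical isomorphisms on the homology of the quotient complexes, and for that the K\"unneth step is unnecessary (the paper itself is silent on this point and elsewhere only invokes the field hypothesis when interpreting $\lambda_\alpha$ as a genuine coproduct on $H_*(L|X|,|X|)$).
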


\printbibliography

\end{document}